\definecolor{dullmagenta}{rgb}{0.4,0,0.4}
\definecolor{darkblue}{rgb}{0,0,0.4}
\newcommand{\reg}{\delta}
\numberwithin{equation}{section}
\newcommand{\nc}{\newcommand}
\nc{\fig}[4]
{
  \begin{figure}[ht!]  
    \centering{\scalebox{#1}{\includegraphics*{./figures/#2.eps}}}
    \caption{#4}
    \label{fig:#3}
  \end{figure}
}
\nc{\bXYZ}{{\bf XYZ\ }}
\nc{\nn}{\nonumber} 
\nc{\nit}{\noindent}
\nc{\marginnote}[1] {\marginpar{\tiny #1}}
\nc{\SH}{Schr\"odinger}
\nc{\NLS}{nonlinear Schr\"odinger}
\nc{\ie}{\emph{i.e.\ \mbox{}}}
\nc{\eg}{\emph{e.g.\ \mbox{}}}
\nc{\per}{{l_j}}
\nc{\perx}{q}
\nc{\vol}{{\rm Vol}}
\nc{\D}{\partial} 
\nc{\lapx}{\dfrac{\partial^2}{\partial x^2}}
\nc{\lapy}{\dfrac{\partial^2}{\partial y^2}}
\nc{\lapxy}{\dfrac{\partial^2}{\partial xy}}
\nc{\diff}[2]{\frac{d #1}{d #2}}
\nc{\diffn}[3]{\frac{d^{#3} #1}{d {#2}^{#3}}} 
\nc{\pdiff}[2]{\frac{\partial #1}{\partial #2}} 
\nc{\pdiffn}[3]{\frac{\partial^{#3} #1}{\partial{#2}^{#3}}} 
\def\Xint#1{\mathchoice
  {\XXint\displaystyle\textstyle{#1}}%
  {\XXint\textstyle\scriptstyle{#1}}%
  {\XXint\scriptstyle\scriptscriptstyle{#1}}%
  {\XXint\scriptscriptstyle\scriptscriptstyle{#1}}%
  \!\int}
\def\XXint#1#2#3{{\setbox0=\hbox{$#1{#2#3}{\int}$} \vcenter{\hbox{$#2#3$}}\kern-.5\wd0}}
\def\dashint{\Xint-}
\nc{\Av}{\dashint_\cell}
\nc{\avg}[1]{\mbox{$\left \langle\ \!#1\!\!\ \right \rangle$}}
\nc{\intRd}{\int_{{\mathbb R}^d}}
\nc{\abs}[1] {\lvert #1 \rvert} 
\nc{\norm}[2] {{\lVert #1 \rVert}_{#2}} 
\nc{\normH}[1]{\|#1\|_{H^1}^2}
\nc{\normL}[1]{\|#1\|_2^2}
\nc{\nl}[1]{|#1|^{2\sigma}}
\nc{\nlb}[1]{#1^{2\sigma+1}}
\nc{\Linvd}{L_\delta^{-1}}
\nc{\Linvz}{L_0^{-1}}
\nc{\Linvzs}{L_0^{-2}}
\DeclareMathOperator{\sech}{sech}
\nc{\st}[1]{\stackrel{(\ref{eq:#1})}{=}}
\nc{\stt}[2]{\stackrel{(\ref{eq:#1}),(\ref{eq:#2})}{=}}
\nc{\eps}{\epsilon}
\nc{\veps}{\epsilon}
\nc{\Ue}{U_\eps}
\nc{\koe}{\frac{k}{\veps}} %% k over \varepsilon
\nc{\wrat}{w_{\rm ratio}}
\nc{\aij}{A^{ij}}                  % inverse effective mass tensor: option 3
\nc{\minv}{m_*^{-1}} % inverse effective mass
\nc{\sqminv}{m_*^{-\frac{1}{2}}}
\nc{\G}{\gamma_{\rm ef\,\!f}}
\nc{\zetap}{{\zeta_*}}
\nc{\zetas}{{\zeta_{1*}}}
\nc{\Pe}{P_{\rm edge}}
\nc{\slope}{\dfrac{d\cP[u(\cdot;\mu)] }{d\mu}}
\nc{\moot}{\mu_2}
\nc{\mum}{\mu_{\rm min}}
\def\R{\mathbb{R}}
\nc{\bk}{{\bf k}}
\nc{\bq}{{\bf q}}
\nc{\br}{{\bf r}}
\nc{\bx}{{\bf x}}
\nc{\by}{{\bf y}}
\nc{\bz}{{\bf z}}
\nc{\bl}{{\bf l}}
\nc{\bnu}{{\bf \nu}}
\nc{\bxc}{{\bf x}_c}
\nc{\bxi}{{\bold\xi}}
\nc{\cP}{{\cal P}}
\nc{\cPedge}{{\cal P}_{edge}}
\nc{\cPc}{{\cal P}_{cr}} % critical power (Townes)
\nc{\cell}{{\cal B}}
\nc{\cDD}{\Lambda} % changed notation
\nc{\cA}{{\cal A}}
\nc{\cB}{{\cal B}}
\nc{\cF}{{\cal F}}
\nc{\cG}{{\cal G}}
\nc{\cO}{{\cal O}}  
\nc{\cQ}{{\cal Q}}  
\nc{\cR}{{\cal R}}
\nc{\cI}{{\cal I}}
\nc{\cK}{{\cal K}}
\nc{\cL}{{\cal L}} 
\nc{\cM}{{\cal M}}
\nc{\cN}{{\cal N}} 
\nc{\cE}{{\cal E}}
\nc{\cH}{{\cal H}} 
\nc{\cX}{{\cal X}}
\nc{\cZ}{{\cal Z}} 
\nc{\cT}{{\cal T}} 
\nc{\order}{{\cal O}}
\newtheorem{thm}{Theorem}[section]
\newtheorem{cor}[thm]{Corollary}
\newtheorem{prop}[thm]{Proposition}
\newtheorem{lem}[thm]{Lemma}
\theoremstyle{definition}
\newtheorem{defn}{Definition}[section]
\theoremstyle{remark}
\newtheorem{rem}[thm]{\bf Remark}
\newtheorem{conj}{\bf Conjecture}[section]
\newtheorem{remark}{Remark}[section]
\newcommand{\no}{\noindent}
\newcommand{\ud}{\,\mathrm{d}}
\title{Emergence of Periodic Structure from
Maximizing the Lifetime of a Bound State 
Coupled to Radiation}
\author{Braxton Osting and  Michael I. Weinstein}
\date{\today}
\begin{document}
\maketitle

\no {\bf Keywords:} Fermi's golden rule, quality factor, constrained optimization, ionization, parametric forcing, Schr\"odinger Equation, Bragg resonance, spectral gap

\begin{abstract}
\noindent Consider a system governed by the time-dependent Schr\"odinger equation in its ground state. 
When subjected to weak  (size $\epsilon$) parametric forcing by an ``ionizing field'' (time-varying), the state decays with advancing time due to coupling of the bound state to radiation modes. The decay-rate of this metastable state is governed by {\it  Fermi's Golden Rule}, $\Gamma[V]$, which depends on the potential $V$ and the details of the forcing. We pose the potential design problem: find $V_{opt}$ which minimizes $\Gamma[V]$  (maximizes the lifetime of the state) over an admissible class of potentials with fixed spatial support. We formulate this  problem  as a constrained optimization problem and prove that an admissible optimal solution exists. Then, using quasi-Newton methods, we compute locally optimal potentials. These have the structure of a truncated periodic potential with a localized defect. In contrast to optimal structures for other spectral optimization problems, 
 our optimizing potentials appear to be interior points of the constraint set and to be smooth. The  multi-scale structures that emerge incorporate the physical mechanisms of energy confinement via material contrast and interference effects. 
 
 An analysis of locally optimal potentials reveals local optimality is attained via two mechanisms: (i) decreasing the density of states near a resonant frequency in the continuum and (ii) tuning the oscillations of extended states to make $\Gamma[V]$, an oscillatory integral, small. 
  Our approach achieves lifetimes, $\sim \left(\epsilon^2\Gamma[V]\right)^{-1}$, for locally optimal potentials with 
 $\Gamma^{-1}\sim\mathcal{O}(10^{9})$ as compared with  $\Gamma^{-1}\sim \mathcal{O}(10^{2})$ for a typical potential. 
    Finally, we explore the performance of optimal potentials via simulations of the time-evolution.
\end{abstract}

%\clearpage
%\tableofcontents 
%\vfil\eject

\section{Introduction}
%\cite{testArticle}

In many problems of fundamental and applied science a scenario arises where, due to physical laws or engineering design,  a state of the system is {\it metastable}; the state is long-lived but of finite lifetime due to coupling or leakage to an {\it environment}. In settings as diverse as  linear and nonlinear optics, cavity QED, Bose-Einstein condensation (BEC), and quantum computation, one is interested in the manipulation of the lifetime of such metastable states.  Our goal in this paper is to explore the problem of maximizing the lifetime of a metastable state for a class of {\it ionization problems}. The approach we take is applicable to a wide variety of linear and nonlinear problems. 
Specific examples where metastable states exist include:
\begin{enumerate}
\item the excited state of an atom, {\it e.g.} hydrogen, where due to coupling to a photon field, the atom in its excited state spontaneously undergoes a transition to its ground state, after some excited state {\it lifetime} \cite{CohenTannoudji}, 
\item an approximate bound state (quasi-mode) of  a quantum system, {\it e.g.} atom
 in a cavity or BEC in a magnetic trap, which  leaks (``tunnels'') out of the cavity and whose wave function decays
 with advancing time \cite{PitaevskiiStringari}, 
\item an approximate guided mode of an optical waveguide which, due to scattering, bends in the waveguide, or diffraction, leaks out of the structure, resulting in  attenuation of the wave-field within the waveguide with increasing propagation distance \cite{Marcuse},  and 
\item   a ``scatterer'' which confines ``rays'' , but leaks energy to spatial infinity due to their wave nature, {\it e.g.} Helmholtz resonator, traps rays between obstacles \cite{LaxPhillips}. 
\end{enumerate}

These examples are representative of a class of extended (infinite spatial domain), yet  energy-preserving ({\it closed}) systems, where the mechanism for energy loss is {\it scattering loss}, the  escape of  energy from a compact spatial domain to spatial infinity.
  
    Such systems can often be viewed as  two coupled subsystems, one with oscillator-like degrees of freedom characterized by discrete  frequencies and the other  a wave-field  characterized by continuous spectrum. When (artificially) decoupled from the wave-field, the discrete system has infinitely long-lived time-periodic bound states. Coupling leads to energy transfer from the system with oscillator like degrees of freedom to the wave field. In many situations,  a (typically approximate) reduced description, which is a  
   closed equation for the oscillator amplitudes, can be derived.
   %\footnote{In physics, this is often referred to as the master equation.} 
   This reduction captures the view of the oscillator degrees of freedom as an {\it open} system  with an effective (radiation) damping term. In the problems considered in this paper, the reduced equation is of the simple form:
  \begin{equation}
  \imath \D_t A^\epsilon(t)\ \sim\ \epsilon^2\ \left(\ \Lambda-\  \imath \Gamma\ \right) A^\epsilon(t).
   \label{toy-oscillator}
   \end{equation}
   Here, $\epsilon$ is a real-valued small parameter, measuring the degree of coupling between oscillator and field degrees of freedom. $A^\epsilon(t)$  denotes the slowly-varying complex envelope amplitude of the perturbed bound state.  $\Lambda$ is a real frequency and $\Gamma>0$ is an effective damping, governing the rate of  transfer of energy from the oscillator to field degrees  of freedom.
  
 For example, consider the general linear or nonlinear Schr\"odinger equation
 \begin{equation}
 \imath \D_t\phi^\epsilon \, = \,  H_V\phi^\epsilon + \epsilon\ W(t,x,|\phi^\epsilon|) \ \phi^\epsilon, \quad \quad   H_V\equiv  -\Delta + V(x).\ 
 \label{general-nls}
 \end{equation}
 Here, $V(x)$ is a real-valued  time-independent potential and  $W(t,x,|\phi|)$ is a time-dependent
 potential (parametric forcing), $W=\tilde \beta(t,x)$, or nonlinear potential, {\it e.g.} $W= \pm  |\phi|^2$. 
  Equation \eqref{general-nls} defines an evolution which is unitary in $L^2(\mathbb{R})$.

 In this article we focus on the class of one-dimensional \emph{ionization} problems, where
     \begin{equation*}
    W(t,x)\ =\ \cos(\mu t)\ \beta(x)\ 
    %\label{Fdef}
    \end{equation*}
    where $\beta(x)$ is a spatially localized and real-valued function and $\mu>0$
     is a parametric forcing frequency.
     % We shall consider both cases where $\beta(x)=V(x)$ and where $\beta(x)$ is chosen independently of $V(x)$.
     Thus, our equation is a parametrically forced Schr\"odinger equation:
     \begin{equation}
     \imath \D_t\phi^\epsilon\ =\ H_V\phi^\epsilon\ +\ \epsilon\ \cos(\mu t)\ \beta(x)\ \phi^\epsilon.
     \label{forced-schrod}
     \end{equation} 
     We focus on the case where the parameter $\epsilon$ is real-valued and assumed sufficiently small. 
     \bigskip

\nit{\bf Assumptions for the unperturbed problem, $\epsilon=0$:}\ 
Initially we assume that the potential $V(x)$, decays sufficiently rapidly as $|x|\to\infty$,
although we shall later restrict to potentials with a fixed compact support. 
Furthermore, we assume that $H_V$ has exactly one eigenvalue $\lambda_V<0$, with corresponding (bound state)eigenfunction, $\psi_V(x)$:
 \begin{equation}
 H_V\ \psi_V\ =\ \lambda_V\ \psi_V, \quad  \|\psi_V\|_2=1.
 \label{psi0lam0}
 \end{equation}
 Thus, $\phi^0(x,t)=e^{-\imath \lambda_V t}\psi_V(x)$ is a time-periodic and spatially localized solution of the unperturbed linear Schr\"odinger equation:
  \begin{equation*}
 \imath \D_t\phi\ =\ H_V\phi 
 %\label{linear-schrod}
 \end{equation*}
 We indicate an explicit dependence of  $\lambda_V$ and $\psi_V$ on $V$, since we shall be varying $V$. 
 
 \medskip \nit{\bf Fermi's Golden Rule:}\ 
  We cite consequences of the general theory of \cite{Soffer-Weinstein:98,Kirr-Weinstein:01,Kirr-Weinstein:03}. 
       If $\mu$, the forcing frequency,  is such that $\lambda_V+\mu>0$, then for initial data, $\phi(x,0)=\psi_V(x)$ 
        (or close to $\psi_V$), 
    the solution decays to zero as $t\to\infty$. On a time scale of order $\epsilon^{-2}$ 
    the decay is controlled by \eqref{toy-oscillator}, {\it i.e.}
  \begin{equation}
  |A(t)| \, \sim \, |A(0)| \,   e^{-\epsilon^2\Gamma[V] t},\ \ 0< t < \cO(\epsilon^{-2})\,
  \label{toy-decay}
  \end{equation}
  where $|A(t)|=\left|\langle \psi_V,\phi^\epsilon(t)\rangle\right|$ and $\Gamma[V]$ is a positive constant.
 Thus, we say the bound state has a lifetime of order $\left(\epsilon^2\cdot \Gamma[V]\right)^{-1}$ and the perturbation {\it ionizes} the bound state.

 The emergent damping coefficient, $\Gamma[V]$, is often called {\it Fermi's Golden Rule}  \cite{Visser:2009le}, arising in the context of the spontaneous emission problem. However, the notion of effective radiation damping due to coupling of an oscillator to a field has a long history  \cite{Lamb:00}. In general, $\Gamma\left[V\right]$ is a sum of expressions of the form:

 \begin{equation}
 \left|\ \langle e_V(\cdot,k_{res}(\lambda_V)),{\cal G}_W(\psi_V) \rangle_{L^2(\R^d)} \right|^2
  = |t_V(k_{res})|^2\  \left|\ \langle f_V(\cdot,k_{res}(\lambda_V)),{\cal G}_W(\psi_V) \rangle_{L^2(\R^d)} \right|^2,
 \label{toy-fgr}
 \end{equation}
 (see \eqref{eqn:Gamma}) where ${\cal G}_W(\psi_V)$ depends on the coupling perturbation $W$ in \eqref{general-nls}  and the unperturbed bound state, $\psi_V$. 
 Here, $e_V(\cdot,k_{res})=t_V(k_{res}) f_V(\cdot,k_{res})$ is the {\it distorted plane wave} (continuum radiation mode) associated with the Schr\"odinger operator, $H_V$, at a {\it resonant frequency} $k_{res} = k_{res}(\lambda_V)$, for which $k_{res}^2\in \sigma_{cont}(H_V)$. $t_V(k)$ denotes the transmission coefficient and $f_V(x,k)$ a {\it Jost solution}. 
  In Secs. \ref{sec:Scattering} and \ref{sec:FGR-ionize} we present an outline of the background theory for scattering and the ionization problem, leading to \eqref{toy-decay}, \eqref{toy-fgr}; see \cite{Soffer-Weinstein:98}. 
  \bigskip
 
 We study the problem of maximizing the lifetime of a metastable state, or equivalently, minimizing the scattering loss of a state due to radiation by appropriate deformation of the potential, $V(x)$, within some admissible class, $\cA_1(a,b,\mu)$:
 \begin{equation}
 \min_{V\in\cA_1(a,b,\mu)} \Gamma\left[V\right]. 
 \label{toy-opt}
 \end{equation}
 We refer to Eq. (\ref{toy-opt}) as the potential design problem (PDP). 
Our admissible class, $\cA_1(a,b,\mu)$, is defined as follows:

\begin{defn}
\label{def:VAdmiss}
$V\in \cA_1(a,b,\mu)$ if 
\begin{enumerate}
\item $V$ has support contained in the interval $[-a,a]$, {\it i.e.} $V\equiv0$ for $|x|>a$
\item $V\in H^1(\mathbb R)$ and $\|V\|_{H^1} \leq b$  
\item $H_V=\ -\D_x^2+V(x)$ has exactly one negative eigenvalue, $\lambda_V$,
with corresponding eigenfunction  $\psi_V\in L^2(\R)$, which satisfies Eq. (\ref{psi0lam0})\ : $H_V\psi_V=\lambda_V\psi_V,\ \ \|\psi_V\|_2=1$.
\item $k_{res} \equiv \sqrt{\lambda_V+\mu}>0$ (formal coupling to continuous spectrum)
\end{enumerate}
\end{defn}

\begin{rem}
Based on our numerical simulations, we conjecture that the hypothesis 2., imposing a bound of $V$,
  can be dropped. 
%Can we replace $H^1$ by a space $\mathcal{X}$ for which 2-4 implies $\|V\|_\mathcal{X}$ bounded 
%AND $\mathcal{X}$ is compactly contained in $C^0(|x|\le a)$?
\end{rem}
\bigskip

The idea of controlling the lifetime of states by varying the characteristics of a background potential goes back to the work of E. Purcell \cite{Purcell:1946sf,Purcell1952}, who reasoned that the lifetime of a state can be influenced by manipulating the set of states to which it can couple, and through which it can radiate.

\begin{rem}\label{rem:2classes}
We discuss the potential design problem where
\begin{enumerate}
\item $\beta(x)$ is a fixed function, chosen independently of $V$, for example, $\beta(x) = \mathbf{1}_{[-2,2]}(x)$
\item $\beta(x) = V(x)$. 
\end{enumerate}
\end{rem}
\bigskip

\begin{rem} 
\label{rem:minGamma}
{\bf How does one minimize an expression of the form \eqref{toy-fgr}?}\\
 We can think of two ways in which \eqref{toy-fgr} can be made small:\medskip

 \noindent {\bf Mechanism (A)} Find a potential in $\mathcal{A}_1$ for which the first factor in \eqref{toy-fgr}, $|t_V(k_{res})|^2$ is small, corresponding to low {\it density of states} near $k_{res}^2$. \\
 {\bf N.B.} As proved in Proposition 
\ref{prop:tkGreaterZero}, $|t_V(k)|\ge \mathcal{O}(e^{-Ka})$ for $V$ with support contained in $[-a,a]$.\\
 
 \noindent {\bf Mechanism (B)} Find a potential in $\mathcal{A}_1$ which may have significant density of states near $k_{res}^2$  (say $|t_V(k_{res})|\ge 1/2$) but such that the oscillations of $f_V(x,k_{res})$ are tuned to make the matrix element expression (inner product) in \eqref{toy-fgr} small due to cancellation in the integral.
\medskip

\noindent Indeed, we find that both mechanisms occur in our optimization study. 
\end{rem}

 %%%
 \medskip
 
 \begin{rem}\label{rem:contrast-interf}
 We are interested in the problem of deforming $V$ within an admissible set in such a way as to maximize the lifetime 
  of decaying (metastable) state. 
Intuitively, there are two physical mechanisms with which one can confine wave-energy in a region:
via the depth of the potential (material contrast) and via interference effects. We shall see that our (locally) optimal solutions, of types (A) and (B)  find the  proper balance of these mechanisms.
\end{rem}

\begin{figure}[t!]
\centering
\begin{tabular}{ccc}
\includegraphics[width=2in]{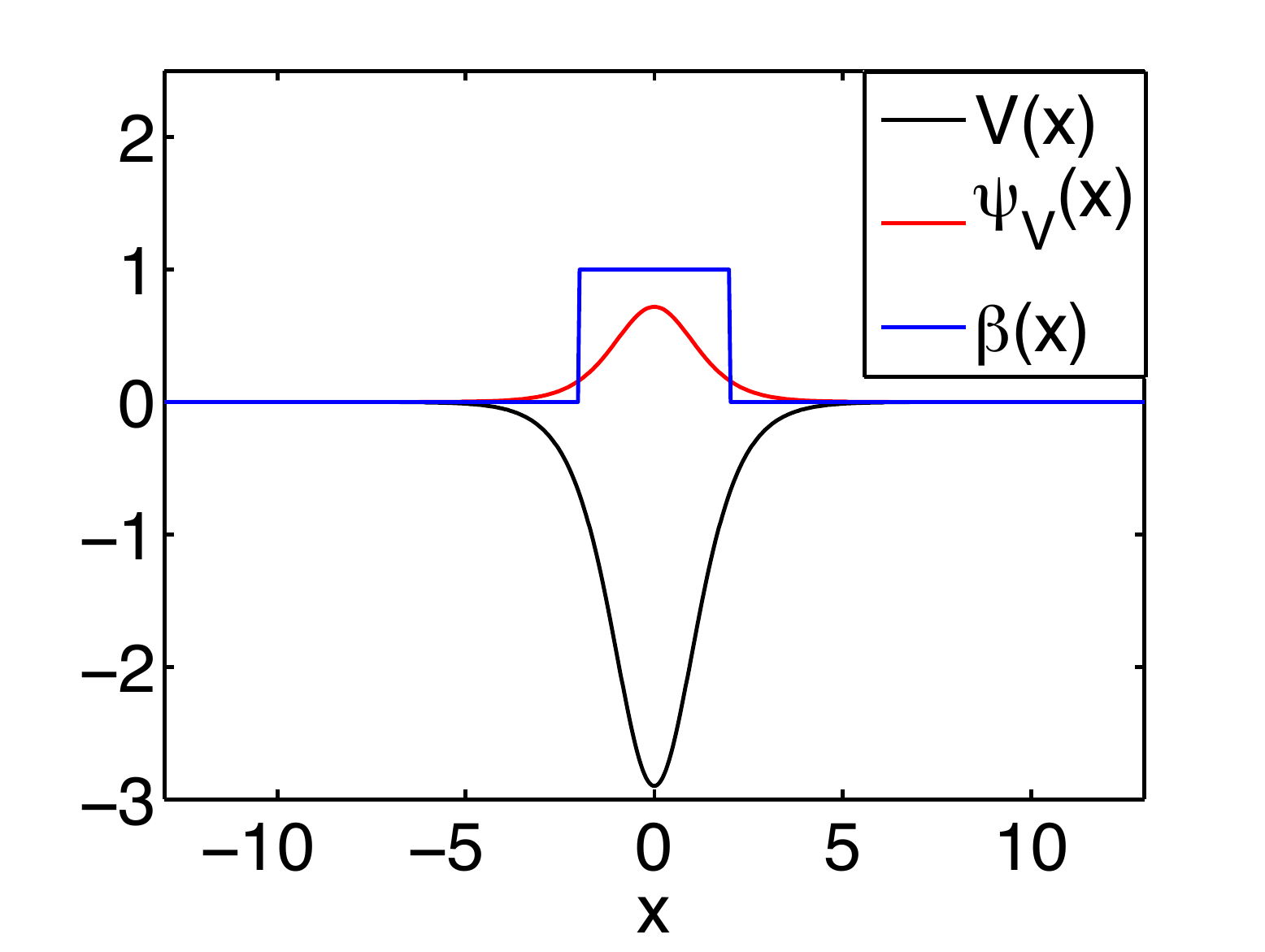}& 
\includegraphics[width=2in]{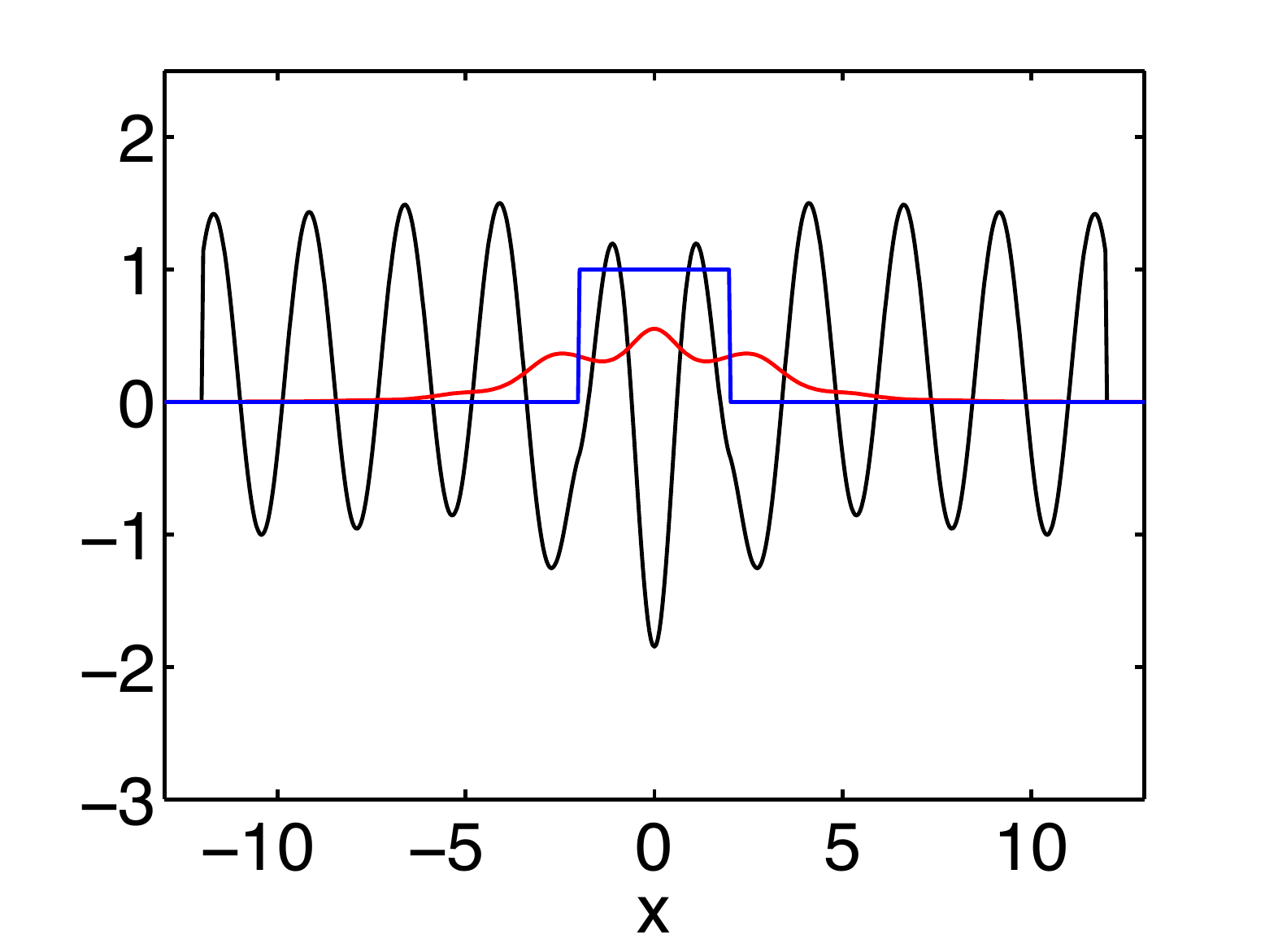} &
\includegraphics[width=2in]{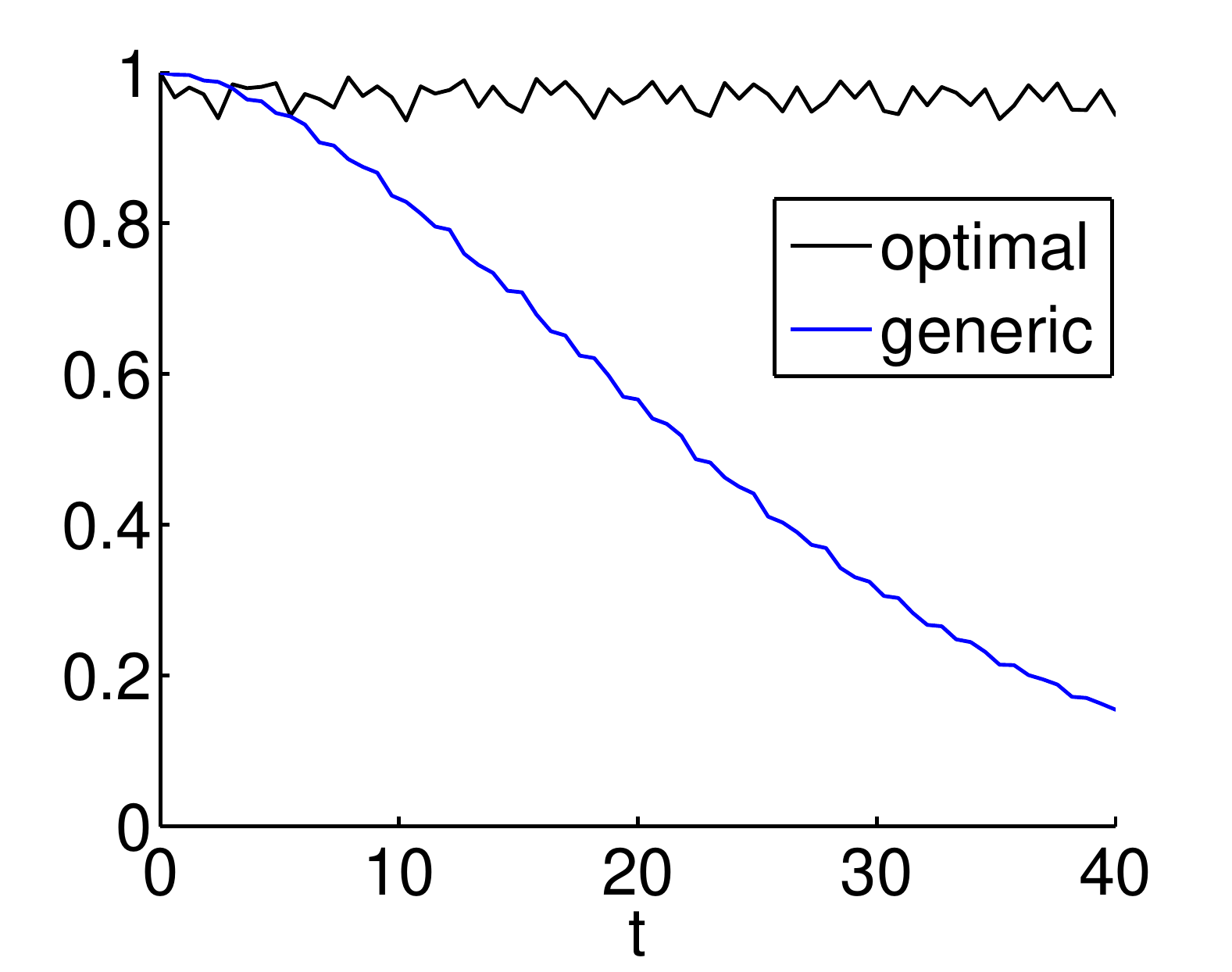}
\end{tabular}
\caption{Numerical demonstration for Eq. \eqref{forced-schrod} ($\epsilon=1$) of bound state time-decay for a typical potential well (left) and bound state persistence  for an optimized potential (center). We plot the potentials $V_{init}$ and $V_{opt}$ (black), corresponding ground states $\psi_{init}$ and $\psi_{opt}$ (red), and forcing function $\beta(x) = \mathbf{1}_{[-2,2]}(x)$ (blue). The rightmost figure displays the time evolution of the projection $|\langle \phi(t,\cdot),\psi_V{(\cdot)} \rangle |^2$ for each potential. Details are given in Sec. \ref{timeDependentSim}. $\Gamma[V_{init}] = 2.1\times10^{-2}$ and $\Gamma[V_{opt}] = 3.3\times10^{-9}$.}
\label{fig:nonAutOpt}
\end{figure}

\subsection{Overview of results:}
\begin{enumerate}
\item In Proposition \ref{thm:existence} we show that the optimal solution to Eq. (\ref{toy-opt}) exists, for an admissible set, $\cA_1^\reg(a,b,\mu)$, derived from $\cA_1(a,b,\mu)$ by  relaxing a discrete constraint; see Eqs. (\ref{eqn:Aeps}) and  (\ref{eqn:PDP}). 
\item Fix the admissible set $\cA^\reg_1(a,b,\mu)$, {\it i.e.} parameters $a,b,\mu,\reg$. 
Select an initial potential, $V_{init}\in\cA^\reg_1(a,b,\mu)$. 
For example, we have chosen a potential of the form $V_{init} = -A\sech(Bx)\ \mathbf{1}_{|x|\le a}$ with the parameters  $A$ and $B$ appropriately chosen.
We use a quasi-Newton method within  $\cA^\reg_1(a,b,\mu)$ and, after  about 50-100 iterations,  find a potential $V_{opt}\in \cA^\reg_1(a,b,\mu)$, for which $\Gamma[V]$ achieves a local minimum in $\cA^\reg_1(a,b,\mu)$.
\item In a typical search $\Gamma[V_{init}]\sim 10^{-2}$ and $\Gamma[V_{opt}]\sim 10^{-9}$. Therefore, by Theorem 
 \ref{thm:SW} \cite{Soffer-Weinstein:98,Kirr-Weinstein:01,Kirr-Weinstein:03}, the  decay time for the solution of \eqref{forced-schrod} with potential $V=V_{opt}$ and   data $\phi^\epsilon(0)= \psi_{V_{opt}}$ is much, much longer than that for the Schr\"odinger equation with potential $V=V_{init}$ and data $\phi^\epsilon(0)= \psi_{V_{init}}$. Thus our optimization procedure finds a potential which supports a metastable state which has a very long lifetime, in the presence of  parametric forcing coupling  to scattering states.
\item  As an independent check on the performance of our optimal structures, we numerically solve the initial value problem  for the time-dependent Schr\"odinger equation \eqref{forced-schrod} with $\epsilon =1$ for both $V=V_{init}$ with data $\phi^\epsilon(0)= \psi_{V_{init}}$ and $V=V_{opt}$ with data $\phi^\epsilon(0)= \psi_{V_{opt}}$. Figure \ref{fig:nonAutOpt} displays a representative comparison of these numerical experiments, revealing the decay of the bound state for $V_{init}$ and a striking persistence (non-decay) of the bound state for $V_{opt}$. The details of this simulation are given in Sec. \ref{timeDependentSim}. 
\item In Section \ref{sec:increaseSupport}, we investigate the optimization for classes of potentials with increasing support, {\it i.e.} $\cA_1^\reg(a,b,\mu)$ for an increasing sequence of $a$---values: $0<a_1<a_2<\dots<a_m$. Figure \ref{fig:compareParams} shows local optima found in $\cA^\reg_1(a_j,b,\mu)$.  As $a$ is taken larger,  the sequence $V_{opt,a_1}, V_{opt,a_2}, \dots V_{opt,a_m}$ appears to take on the character of a truncation to the interval $[-a,a]$ of a periodic structure with a localized defect. This suggests the following
\begin{conj}
\label{conj:convPer}
$\{ V_{opt,a} \}$ converges to $V_{opt,\infty}(x)=V_{per}(x)+ V_{loc}(x)$, where $V_{per}$ is periodic on $\R$ and $V_{loc}(x)$  is spatially localized.
\end{conj}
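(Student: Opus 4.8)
The plan is to establish the conjecture in three stages, reflecting the two mechanisms of Remark~\ref{rem:minGamma}: first show that for large $a$ the minimizer is forced into the exponentially-small-transmission regime (Mechanism A); then show that achieving the \emph{optimal} exponential decay rate at the fixed energy $k_{res}^2$ over a window of width $2a$ forces the bulk to be the truncation of a periodic medium with a spectral gap at $k_{res}^2$; and finally isolate the localized defect needed to pin the eigenvalue $\lambda_V$ and pass to the limit $a\to\infty$. For Stage~(i) I would first pin down the exponential scale of $\inf_{\cA_1(a,b,\mu)}\Gamma[V]$: an upper bound is obtained by exhibiting a truncated-periodic-plus-defect trial potential, while the matching lower bound $|t_V(k_{res})|\ge c\,e^{-Ka}$ comes directly from Proposition~\ref{prop:tkGreaterZero}. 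Because the oscillatory matrix-element factor in \eqref{toy-fgr} can contribute at most polynomial-in-$a$ gains (its phase cancellations cannot beat the exponential scale set by the transmission), the dominant route to small $\Gamma$ is Mechanism~(A), and the minimizer must saturate $|t_{V_{opt,a}}(k_{res})|\asymp e^{-\kappa a}$ for the optimal rate $\kappa=\kappa(b,\mu)$.

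For Stage~(ii), I would pass to the transfer (monodromy) matrix $M_a(k_{res})$ of $-u''+Vu=k_{res}^2u$ across $[-a,a]$, for which $|t_V(k_{res})|^{-1}$ grows like $\|M_a(k_{res})\|$. Saturating the exponential bound is then equivalent to \emph{maximizing the growth rate of $\|M_a(k_{res})\|$ per unit length} at the fixed energy $k_{res}^2$, subject to the admissibility constraints. The key analytic input I would prove here is an optimal-gap principle: over admissible bulk profiles, the largest attainable decay rate at a prescribed energy is achieved by a periodic potential $V_{per}$ whose spectral gap is centered so that $k_{res}^2$ sits at the point of maximal evanescence (the mid-gap), i.e.\ $V_{per}$ maximizes the imaginary part of the Bloch quasimomentum at $k_{res}^2$ among Hill potentials of the relevant period and norm. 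Since the $H^1$ constraint is observed numerically to be inactive, the maximizer is an interior, smooth critical point, consistent with the smoothness reported in the paper.

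Stage~(iii) handles the defect and the limit. A purely periodic potential has no $L^2$ eigenfunctions, so constraints~3 and~4 of Definition~\ref{def:VAdmiss} cannot be met without a localized perturbation; I would introduce $V_{loc}$ via a standard in-gap defect-mode construction, tuning its strength so that the single bound-state energy $\lambda_V$ satisfies $\lambda_V+\mu$ equal to the mid-gap value selected in Stage~(ii), which simultaneously localizes $\psi_V$ around the defect. To extract the limit, I would recenter each $V_{opt,a}$ at its defect, use the uniform $H^1$ bound to obtain an $H^1_{\loc}$-weak limit along a subsequence, and use the exponential smallness of the transmission—comparing consecutive Floquet cells away from the center—to show the bulk is asymptotically periodic while the defect stays localized. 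Weak convergence would then be upgraded to strong $H^1_{\loc}$ convergence using the Euler--Lagrange equation satisfied by the smooth interior minimizer, yielding $V_{opt,\infty}=V_{per}+V_{loc}$.

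The hard part will be Stage~(ii): rigorously proving that maximizing decay-per-length at a single fixed energy genuinely forces periodicity, rather than merely forcing \emph{some} gap-opening profile. This requires an extremality/rigidity argument (a one-energy optimal-gap theorem), which does not follow from standard Floquet theory alone. A secondary difficulty is that Mechanism~(B)—oscillatory cancellation in the matrix element $\langle f_V(\cdot,k_{res}),\cG_W(\psi_V)\rangle$—may perturb the pure gap-optimization at subleading order, coupling the periodicity of the bulk to the localization of $\psi_V$ through the defect; disentangling these, together with the intrinsic non-compactness of the limit $a\to\infty$ (there is no fixed domain), is where the main effort lies. A two-scale Bloch/homogenization framework augmented with a defect corrector appears to be the most promising route.
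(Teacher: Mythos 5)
First, a point of orientation: the statement you set out to prove is stated in the paper as a \emph{conjecture} (Conjecture \ref{conj:convPer}); the paper offers no proof, only numerical evidence (the apparent stabilization of $V_{opt,a}$ on common intervals as $a$ grows in Fig.~\ref{fig:compareParams}, and the gap formation visible in the transmission and resonance data of Fig.~\ref{fig:compareParams2}), plus a heuristic construction of good potentials (truncate a one-gap periodic potential and add a shallow well supporting one bound state). So there is no reference proof to compare against; the question is whether your program would close the open problem, and it would not, for two concrete reasons. The first is that the entry point, Stage~(i), is backwards. You claim the oscillatory matrix element in \eqref{toy-fgr} ``can contribute at most polynomial-in-$a$ gains,'' so that near-optimal potentials must exploit Mechanism~(A). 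But the exponential lower bound of Proposition~\ref{prop:tkGreaterZero} (see \eqref{tVbound}) constrains the \emph{transmission} factor, i.e.\ it caps the gains available to Mechanism~(A); the cancellation factor $\sum_\pm|\langle \beta\psi_V, f_{V\pm}(\cdot,k_V)\rangle|^2$ obeys no such lower bound and, as the paper notes explicitly in Sec.~\ref{sec:2mechs}, could in principle vanish identically. The paper's own data contradict your claim: $\Gamma[V_{B,opt}]=1.3\times 10^{-15}$ is achieved by cancellation with support $a=8$, while the gap-opening potential attains only $\Gamma[V_{A,opt}]=1.2\times 10^{-8}$ with the much larger support $a=64$. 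Hence a global minimizer has no reason to be of type (A), and—a separate mismatch—the conjecture concerns the family of \emph{local} optima produced by the quasi-Newton iteration, not global minimizers, so even a correct global analysis would not immediately address it.

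The second gap is the one you flag yourself but cannot be waved through: the ``optimal-gap principle'' of Stage~(ii)—that maximizing the decay rate of $\|M_a(k_{res})\|$ per unit length at a single fixed energy, among admissible profiles, forces periodicity with $k_{res}^2$ at mid-gap—is precisely the rigidity theorem that would constitute the mathematical substance of the conjecture. It is an open extremal problem (an extremality statement for the Lyapunov exponent at fixed energy), and without it Stage~(iii) has nothing to converge to: the recentering/weak-compactness argument from the uniform $H^1$ bound yields only an $H^1_{\text{loc}}$ limit along subsequences, with no mechanism identifying that limit as $V_{per}+V_{loc}$. In summary, your proposal is a reasonable research program, but its two pillars are, respectively, false as stated (Stage~(i), by the paper's Proposition~\ref{prop:tkGreaterZero} and the numerics of Sec.~\ref{sec:2mechs}) and unproven (Stage~(ii)); the conjecture remains open.
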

\item Our computational methods find locally optimal solutions which have small values of $\Gamma$ due to either of the mechanisms discussed in Remark  \ref{rem:minGamma} above. In Sec. \ref{sec:2mechs}, we find the confinement properties of potentials, which are optimal  due to the cancellation mechanism (\ mechanism (B)\ ), are very sensitive to perturbations in the forcing frequency away from the forcing frequency, $\mu$, for which the optimization is carried out. 
\item In section \ref{timeDependentSim} we study the stability or robustness of the  state, $\psi_{V_{opt}}$, for a locally optimal potential, $V_{opt}$. Time-dependent simulations  of the parametrically forced Schr\"odinger equation are performed for an un-optimized potential, $V_{init}$, and initial data $\psi_{V_{init}} +\ noise$ and  for $V_{opt}$, and initial data $\psi_{V_{opt}} +\ noise$. Optimal structures  effectively filter noise from a ground state, while a generic potential does not. The time scale of bound state radiation damping $\sim\left(\epsilon^2\Gamma[V_{opt}]\right)^{-1}$ is $\gg$ the time scale for dispersion of noise.
\item Our computations show that the inequality constraints of the (relaxed) admissible set, \eqref{eqn:Aeps},  are not active at   optimal potentials. This is in contrast to studies of other spectral optimization problems, {\it i.e.} scattering resonances   
\cite{II:1986uq,OptimizationOfScatteringResonances} and  band gaps \cite{Kao:2005pt,Sigmund:08}
 and other problems \cite{Krein:1955ye,Cox1990,osher-santosa-jcp01,Lipson-etal:06}, where periodic patterns attaining material bounds are obtained.
\end{enumerate}

\subsection{Outline of the article}

In section \ref{sec:Scattering} we introduce  the needed scattering theory background to explain 
 resonant radiative time-decay and Fermi's golden rule, $\Gamma[V]$, which characterizes the lifetime of metastable states. In section \ref{sec:FGR-ionize} 
 theory we summarize the theory of \cite{Soffer-Weinstein:98,Kirr-Weinstein:01,Kirr-Weinstein:03} in the context of the ionization problem \eqref{forced-schrod}. 
  In section \ref{sec:PDP} we introduce an appropriate regularization, $\mathcal{A}_1^\delta$,  of the admissible set of potentials, $\mathcal{A}_1$ (see Definition \ref{def:VAdmiss}) which is advantageous for numerical computation,  and prove the existence of a minimizer within this class. In section 
  \ref{sec:numMeth} we outline the numerical methods used to solve the optimization problem.
   In section \ref{sec:Results} we present numerical results for optimal structures and, as an independent check,
   investigate the effectiveness and robustness  of these structures for the time-dependent Schr\"odinger equation with optimized and un-optimized potentials. Section \ref{sec:Discussion} contains discussion and conclusions and Appendix  \ref{sec:appCompGrads} contains the detailed computations of functional derivatives and  gradients used in the optimization.

\subsection{Notation and conventions}
\begin{enumerate}
\item  $L^2(\mathbb R)$ inner product:  $\langle f,g\rangle = \int_{\mathbb R} \overline{f(x)} g(x) \ud x$
\item $L^2_{\text{comp}}(\mathbb R)$ is the space of compactly supported $L^2(\mathbb R)$ functions and $L^2_{\text{loc}}(\mathbb R)$ is the space of functions which are square-integrable on any compact subset of $\mathbb R$. 
\item Weighted $L^2$ space: 
$$
L^{2,s}(\mathbb R) = \{f\colon (1+|x|^2)^{\frac{s}{2}} f \in L^2( \mathbb R) \}, \quad \quad s \in \mathbb R
$$
with norm $\| f \|_{L^{2,s}}^2 = \int_{\mathbb R}  (1+|x|^2)^s f^2 \ud x$
\item Weighted Sobolev space:
$$
H^{k,s}(\mathbb R) = \{ f\colon  \partial_x^\alpha f \in L^{2,s}(\mathbb R), \, 0\leq \alpha \leq k \}, \quad \quad s\in \mathbb R
$$
with norm $\| f \|_{H^{k,s}}^2 = \|  (1+|x|^2)^\frac{s}{2} f \|_{H^k}$
\item $\mathcal{B}(X,Y)$ denotes the space of bounded linear operators from $X$ to $Y$ and $\mathcal{B}(X)=\mathcal{B}(X,X)$. 
\item Summation notation:\ $\sum_\pm f_\pm \equiv f_+ + f_-$ .
% {\bf OR} $\sum_{\tau=\pm} f_\tau = f_+ + f_-$ {\bf OR} $\sum_{\tau\in\{+,-\}} f_\tau = f_+ + f_-$
\item The letter $C$ shall denote a generic constant. 
\end{enumerate}

\paragraph{Acknowledgements.} B. Osting was supported in part by US NSF Grant No. DMS06-02235, EMSW21- RTG: Numerical Mathematics for Scientific Computing. M.I. Weinstein was supported in part by  U.S. NSF Grants  DMS-07-07850 and  DMS-10-08855. The authors wish to thank P. Deift, T. Heinz,  A. Millis,  Y. Silberberg and C.W. Wong  for stimulating conversations.

%%%%
\section{Spectral theory for the one-dimensional Schr\"odinger operator with compact potential}
\label{sec:Scattering}
In this section, we discuss relevant properties of the Schr\"odinger operator $H_V \equiv - \partial_x^2  + V$ for sufficiently regular and compactly supported potentials, {\it e.g.} $V\in \cA_1(a,b,\mu)$. More general and complete treatments can be found, for example, in \cite{Agmon75,tang-zworski-1dscat}. 

\subsection{The outgoing resolvent operator}
\label{sec:ResolventOp}
Let $0\ne k \in \mathbb C$. For $V=0$, we introduce the 
outgoing free resolvent
\begin{equation}
\label{eqn:resolvent}
\psi(x) = R_0[f](x,k)  = \int_{\mathbb R} G_0(x,y,k) f(y) \ud y, \quad  \quad
G_0(x,y,k) \equiv \imath (2 k)^{-1} \exp(\imath k |x-y|) 
\end{equation}
defined for $f\in L^2_{\text{comp}}(\mathbb R)$. The function $\psi=R_0(k)f$ satisfies the free Schr\"odinger equation and outgoing boundary condition
\begin{align*}%\label{outgoing-bc}
%\label{eqn:HelmV}
(-\partial_x^2- k^2) \psi = f, \ \ \ 
\lim_{x\rightarrow \pm \infty} (\partial_x \mp \imath k) \psi =0.
\end{align*}
For $V\neq0$ we introduce the outgoing resolvent, $R_V(k) \equiv (H_V - k^2)^{-1}$, satisfying  
\begin{equation}
\label{eqn:RVid}
(H_V - k^2)\circ R_V( k) = \text{Id}
\end{equation}
and which, for $\Im k>0$, is bounded on $L^2(\mathbb R)$ except for a discrete set of the form, $k_l=\imath{\kappa_l},\ \kappa_l>0$,
 where $-\kappa_l^2$ are eigenvalues of $H_V$.
We have the identity
$$
R_0 = R_0 \circ (H_V - k^2) \circ R_V = (\text{Id} + R_0 V) \circ R_V, 
$$ 
or equivalently, the Lipmann-Schwinger equation, 
\begin{equation}
\label{eqn:RV}
R_V = (\text{Id} + R_0 V)^{-1} \circ R_0,\ \ \Im k>0,\ \ k\ne \imath \kappa_l
\end{equation}
 
\begin{prop} 
\label{RVmero} 
The following are properties of the resolvent, $R_V$\ \cite{Agmon75,tang-zworski-1dscat}. 
\begin{enumerate}
\item The  family of operators $R_V(k)\colon L^2_{\text{comp}}(\mathbb R) \rightarrow L^2_{\text{loc}}(\mathbb R)$, given by Eq. (\ref{eqn:RV}), exists and has a meromorphic extension to $k \in \mathbb{C}$. It has no pole for $k \in \mathbb{R}\setminus \{0\}$. 
\item For $k\in\mathbb{R}$ and arbitrary $f\in L^2_{\text{comp}}$, the function $\psi=R_V(k)f$ satisfies 
\begin{equation*}
(H_V - k^2) \psi = f,\ \ \ \ \lim_{x\rightarrow \pm \infty} (\partial_x \mp \imath k) \psi =0.
\end{equation*}
\end{enumerate}
\end{prop}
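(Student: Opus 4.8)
The plan is to recast the existence and meromorphy of $R_V(k)$ as a statement about an analytic family of compact operators, invoke the analytic Fredholm theorem, and carry out a Rellich-type argument to exclude poles on the real axis. Since $V$ is supported in $[-a,a]$, let $\chi=\mathbf 1_{[-a,a]}$ and consider $\cK(k)\equiv \chi\,R_0(k)\,V$ acting on $L^2([-a,a])$. Because $V=\chi V$, invertibility of $\mathrm{Id}+R_0 V$ on the full space is equivalent to invertibility of $\mathrm{Id}+\cK(k)$ on $L^2([-a,a])$: any $u$ with $u=-R_0(Vu)$ is determined everywhere by its restriction to $[-a,a]$, which in turn solves $u=-\cK(k)u$. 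First I would check that the kernel of $\cK(k)$ on $[-a,a]^2$, namely $\tfrac{\imath}{2k}e^{\imath k|x-y|}V(y)$, is square-integrable and, for fixed $k\in\C\setminus\{0\}$, continuous, and that it depends holomorphically on $k$ there. Hence $\cK(k)$ is a Hilbert--Schmidt (so compact) operator-valued function, holomorphic on $\C\setminus\{0\}$; and since its Hilbert--Schmidt norm tends to $0$ as $\Im k\to+\infty$, $\mathrm{Id}+\cK(k)$ is invertible by Neumann series for such $k$.

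The analytic Fredholm theorem then gives that $(\mathrm{Id}+\cK(k))^{-1}$, and therefore $R_V(k)=(\mathrm{Id}+R_0 V)^{-1}R_0$ as a map $L^2_{\comp}(\R)\to L^2_{\loc}(\R)$, exists and is meromorphic on $\C\setminus\{0\}$, with poles confined to the discrete set where $\mathrm{Id}+\cK(k)$ has nontrivial kernel. For $\Im k>0$ such a $k=\imath\kappa$ yields, via $\psi=-R_0(V\psi)\in L^2(\R)$, an eigenfunction of $H_V$ with eigenvalue $-\kappa^2<0$, which recovers the bound states $k_l=\imath\kappa_l$ associated with \eqref{eqn:RV}. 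The value $k=0$ is excluded throughout, as $G_0(x,y,k)$ is itself singular there.

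The crux is the absence of poles for real $k\neq0$, i.e.\ the injectivity of $\mathrm{Id}+\cK(k)$ there (which, by Fredholm index zero, upgrades to invertibility). Suppose $u=-\cK(k)u$ for some real $k\neq0$ and set $\psi\equiv-R_0(k)(Vu)$ on $\R$; then $\psi|_{[-a,a]}=u$, $\psi$ solves $(H_V-k^2)\psi=0$, and beyond $[-a,a]$ the explicit free kernel forces $\psi(x)=c_+e^{\imath k x}$ for $x>a$ and $\psi(x)=c_-e^{-\imath k x}$ for $x<-a$, i.e.\ $\psi$ is purely outgoing. Applying Green's identity on $[-L,L]$ to $\psi\overline{\psi}''-\overline{\psi}\psi''$, the interior integral vanishes because $V$ and $k^2$ are real, while the boundary terms evaluate, using the outgoing form of $\psi$, to $-2\imath k\left(|c_+|^2+|c_-|^2\right)$; since $k\neq0$ this forces $c_\pm=0$. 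Then $\psi\equiv0$ outside $[-a,a]$, so $\psi(a)=\psi'(a)=0$, and uniqueness for the initial value problem of the second-order ODE $-\psi''+V\psi=k^2\psi$ gives $\psi\equiv0$, whence $u=0$. This Rellich-type step is the main obstacle, being exactly where the radiation condition and the reality of $k$ are used to rule out real resonances.

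Finally, part~2 follows from the same representation. Writing $w=R_V(k)f=(\mathrm{Id}+R_0V)^{-1}R_0 f$, one has $w=R_0(f-Vw)$ with $f-Vw\in L^2_{\comp}(\R)$; applying $(-\partial_x^2-k^2)$ and using $(-\partial_x^2-k^2)R_0=\mathrm{Id}$ on compactly supported data yields $(H_V-k^2)w=f$. The radiation condition is read off from the free kernel: for $x$ beyond the support of $f-Vw$ one has $w(x)=\tfrac{\imath}{2k}e^{\imath k x}\!\int e^{-\imath k y}(f-Vw)(y)\ud y$, a pure multiple of $e^{\imath k x}$, so $(\partial_x-\imath k)w\to0$ as $x\to+\infty$, and symmetrically $(\partial_x+\imath k)w\to0$ as $x\to-\infty$.
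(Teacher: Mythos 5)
The paper itself offers no proof of this proposition --- it is quoted from the cited references (Agmon; Tang--Zworski) --- so there is no internal argument to compare against. Your proof is, in essence, the standard one from that literature, and its core steps are sound: the reduction of $\mathrm{Id}+R_0(k)V$ to the Hilbert--Schmidt operator $\cK(k)=\chi R_0(k)V$ on $L^2([-a,a])$ (legitimate because $V=\chi V$), smallness of $\cK(k)$ for large $\Im k$, the analytic Fredholm theorem on the connected set $\C\setminus\{0\}$, the identification of upper-half-plane poles with eigenvalues, and the Rellich-type step in which Green's identity applied to a putative outgoing null solution yields $-2\imath k\left(|c_+|^2+|c_-|^2\right)=0$, forcing $c_\pm=0$ and then $\psi\equiv 0$ by ODE uniqueness from the data $\psi(a)=\psi'(a)=0$; the Fredholm alternative upgrades injectivity to invertibility, and part 2 follows correctly from $w=R_0(f-Vw)$ together with the explicit form of the free kernel outside a compact set.

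There is one point where the proposal falls short of the stated claim: the proposition asserts a meromorphic extension to all of $k\in\C$ (with a pole permitted only at $k=0$ on the real axis), whereas your argument establishes meromorphy only on $\C\setminus\{0\}$ and then ``excludes'' $k=0$ outright. Since $\cK(k)$ itself blows up at $k=0$, analytic Fredholm theory on the punctured plane says nothing about the nature of the singularity of $(\mathrm{Id}+\cK(k))^{-1}$ there; a priori it could be worse than a pole. The repair is standard: on $[-a,a]^2$ write $\frac{\imath}{2k}e^{\imath k|x-y|}V(y)=\frac{\imath}{2k}V(y)+\frac{\imath}{2k}\left(e^{\imath k|x-y|}-1\right)V(y)$, so that $\cK(k)=\frac{\imath}{2k}P+H(k)$ with $P$ rank one and $H(k)$ entire; the inversion of $\mathrm{Id}+\cK(k)$ near $k=0$ then reduces, via a Sherman--Morrison-type formula or the meromorphic Fredholm theory for finitely meromorphic families, to a scalar meromorphic condition. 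This is precisely the point at which the distinction between generic and exceptional potentials (the Wronskian $W_V(0)$ and zero-energy resonances discussed in Section 4.1 of the paper) enters, so it is worth making explicit rather than discarding.
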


\no We denote by $G_V(x,y,k)$, the Green's function, defined as the kernel of the integral operator $R_V(k)$, in analogy with Eq. (\ref{eqn:resolvent}). In the upper half plane, $\Im k>0$, $G_V(x,y,k)$ has a  finite number of simple poles at $k_l=i \kappa_l,\ \kappa_l>0$ . 
In the lower half plane,  $\Im k < 0$, $G_V(x,y,k)$ may have poles at {\it resonances}, values of $k$ for which the scattering resonance spectral problem:
 \begin{equation*}
(H_V - k^2) \psi = 0,\ \ \ \ \lim_{x\rightarrow \pm \infty} (\partial_x \mp \imath k) \psi =0.
\end{equation*}
has a non-trivial solution.
% and written
%\begin{equation}
%\label{eqn:res}
%\text{res}(H_V) \equiv \{\text{poles } k\in\mathbb C \text{ of } R_V(k) \text{ with }  \Im k < 0 \}
%\end{equation}

A consequence of Theorem 4.2 of \cite{Agmon75} is the following:
\begin{thm}[Limiting absorption principle]
\label{thm:lap}
For $\Im k>0$, the resolvent $R_V(k) = (H_V - k^2)^{-1}$ is a meromorphic function with values in $\mathcal{B}(L^2)$. For $s>\frac{1}{2}$, it can be extended to $\Im k\ge0$ as an operator on $\mathcal{B}(L^{2,s},H^{2,-s})$ with limit
\begin{equation}
R_V(k_0)\ =\ \lim_{\substack{\Im k>0, \\ k\to k_0\in\mathbb{R}}} R_V(k).
\end{equation}
Throughout this paper, we shall understand $R_V(k_0)$, for $k_0\in\mathbb{R}$, to be the limit taken in this way.
\end{thm}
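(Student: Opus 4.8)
The plan is to reduce the assertion for $R_V$ to the corresponding statement for the free resolvent $R_0$ via the Lippmann--Schwinger equation \eqref{eqn:RV}, and then to control the inverse $(\mathrm{Id}+R_0V)^{-1}$ by an analytic Fredholm argument. First I would establish the limiting absorption principle for $R_0$ directly from its explicit kernel $G_0(x,y,k)=\imath(2k)^{-1}\exp(\imath k|x-y|)$. For $\Im k>0$ this kernel decays and $R_0(k)\in\mathcal{B}(L^2)$; the boundary limit $k\to k_0\in\mathbb R\setminus\{0\}$ fails on $L^2$ because the limiting kernel $\imath(2k_0)^{-1}\exp(\imath k_0|x-y|)$ no longer decays, but the weight $(1+|x|^2)^{-s}$ is integrable precisely when $2s>1$, i.e. $s>1/2$, which is exactly the condition needed for the Schur/Cauchy--Schwarz bound showing $R_0(k)\colon L^{2,s}\to L^{2,-s}$ is bounded uniformly for $k$ in a closed upper half-plane neighborhood of $k_0$, with a norm-continuous boundary value. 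This weighted estimate, uniform up to the real axis, is the content of Theorem~4.2 of \cite{Agmon75}.

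Next I would set up the Fredholm framework. Since $V\in H^1(\mathbb R)\subset L^\infty(\mathbb R)$ has support in $[-a,a]$, multiplication by $V$ maps $L^{2,-s}$ boundedly into $L^{2,s}$, the compact support absorbing the weight. Composing with the free resolvent, $K(k):=R_0(k)V$ is therefore a bounded operator on $L^{2,-s}$ for each $k$ in the closed upper half-plane away from $k=0$; it is analytic for $\Im k>0$ and continuous up to the boundary by the first step. Moreover $K(k)$ is compact: the factor $V$ localizes to $[-a,a]$ and the free resolvent gains two derivatives, so Rellich's compact embedding on the bounded set applies. Hence $\mathrm{Id}+K(k)$ is an analytic family of Fredholm operators of index zero.

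The analytic Fredholm theorem then gives that $(\mathrm{Id}+K(k))^{-1}$ exists and is meromorphic for $\Im k>0$, with poles exactly where $-1\in\sigma(K(k))$; these correspond to the eigenvalues $k_l=\imath\kappa_l$ of $H_V$. The crucial point is invertibility on the real axis: a nontrivial solution of $(\mathrm{Id}+K(k_0))u=0$ with $k_0\in\mathbb R\setminus\{0\}$ yields $u=-R_0(k_0)(Vu)$, i.e. a nontrivial outgoing solution of $(H_V-k_0^2)u=0$ at positive energy $k_0^2$; reality of $k_0$ makes this solution simultaneously outgoing and incoming, which for the one-dimensional Schr\"odinger ODE with compactly supported $V$ forces $u\equiv0$ outside $[-a,a]$ and hence, by unique continuation, $u\equiv0$. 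This is the absence of positive embedded eigenvalues and real resonances, consistent with the no-real-pole statement of Proposition~\ref{RVmero}. Thus $(\mathrm{Id}+K(k))^{-1}$ extends continuously from $\Im k>0$ to $\Im k\ge0$ for $k\ne0$.

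Finally I would compose. Writing $R_V(k)=(\mathrm{Id}+K(k))^{-1}R_0(k)$, the factor $R_0(k)\colon L^{2,s}\to L^{2,-s}$ has a norm limit as $k\to k_0$ by the first step, and the boundary-continuous bounded family $(\mathrm{Id}+K(k))^{-1}$ on $L^{2,-s}$ then produces a limit for $R_V(k)$ in $\mathcal{B}(L^{2,s},L^{2,-s})$, defining $R_V(k_0)$. The $H^{2,-s}$ regularity of the target is recovered a posteriori: $\psi=R_V(k)f$ solves $(-\partial_x^2+V-k^2)\psi=f$, so $\partial_x^2\psi=(V-k^2)\psi-f\in L^{2,-s}$ since $V$ is bounded and compactly supported, giving $\psi\in H^{2,-s}$ with continuous dependence on $k$. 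I expect the \emph{main obstacle} to be the first step---the uniform weighted bound on the free resolvent up to the real axis together with continuity of its boundary value---since this is where $s>1/2$ is forced and where the oscillatory, non-decaying limiting kernel must be controlled; once that is in hand, the Fredholm machinery and the one-dimensional ODE uniqueness argument ruling out real poles are comparatively routine.
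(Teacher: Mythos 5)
Your proposal is correct and follows essentially the same route the paper relies on: the paper does not prove Theorem~\ref{thm:lap} itself but quotes it as a consequence of Theorem 4.2 of \cite{Agmon75}, whose argument rests on exactly the two ingredients you develop --- the weighted-space bound on the free resolvent for $s>\frac{1}{2}$ and the invertibility of $\mathrm{Id}+R_0(k)V$ up to the real axis (precisely the bounds the paper records in \eqref{eqn:R0bnd}) --- combined through the Lippmann--Schwinger factorization \eqref{eqn:RV}. Your Fredholm/compactness step, the Wronskian-flux exclusion of real poles (consistent with Proposition~\ref{RVmero}), and the a posteriori recovery of $H^{2,-s}$ regularity from the equation are the standard completions of that same scheme, so there is no substantive divergence from the paper's (cited) proof.
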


Since we are interested in how the properties of solutions change with the potential, we make use of the resolvent identity 
\begin{equation}
\label{eqn:2ndId}
R_V - R_U =  R_V(U-V) R_U.
\end{equation}

We now refine Thm. \ref{thm:lap} by showing that $R_V(k)\colon L^{2,s} \rightarrow H^{2,-s}$ is (locally) Lipschitz continuous with respect to $V$. To prove this, we shall use  the following bounds, used in the proof of Theorem \ref{thm:lap} \cite{Agmon75}:
\begin{subequations}
\label{eqn:R0bnd}
\begin{align}
\| R_0(k) \|_{L^{2,s} \rightarrow H^{2,-s}} & \leq C \\
\| (\text{Id}+ R_0(k)V)^{-1} \|_{H^{2,-s} \rightarrow H^{2,-s}} & \leq C(V),\ \ \Im k\ge0,\ \ s>\frac{1}{2}
\end{align}
\end{subequations}
and the following 
\begin{lem}
\label{lem:compL2s}
Suppose $f\in L^{2,-s}(\mathbb R)$ has compact support with $\text{supp}(f) \subset [-a,a]$. Then $f\in L^{2,s}(\mathbb R)$ and
$$
\|f\|_{L^{2,s}} \leq C(a) a^{2s} \|f \|_{L^{2,-s}}.
$$
\end{lem}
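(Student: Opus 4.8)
The plan is to reduce everything to a pointwise comparison of the two weights on the common compact support: once the ratio of weights is controlled on $[-a,a]$, the inequality follows by direct integration, with no input about $f$ beyond its support and $L^{2,-s}$-membership. Write $w_s(x) = (1+|x|^2)^s$, so that $\|f\|_{L^{2,s}}^2 = \int_{\mathbb R} w_s(x)\,|f(x)|^2\,\ud x$ and $\|f\|_{L^{2,-s}}^2 = \int_{\mathbb R} w_s(x)^{-1}\,|f(x)|^2\,\ud x$. Since $\mathrm{supp}(f)\subset[-a,a]$, both integrals are effectively taken only over $[-a,a]$.

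First I would record the elementary pointwise bound, valid for $|x|\le a$ and $s>0$ (which holds in the regime $s>\tfrac12$ relevant here):
$$
w_s(x) = w_s(x)^2\,w_s(x)^{-1} = (1+|x|^2)^{2s}\,(1+|x|^2)^{-s} \le (1+a^2)^{2s}\,(1+|x|^2)^{-s},
$$
using only that $t\mapsto(1+t)^{2s}$ is nondecreasing together with $|x|^2\le a^2$ on the support. Multiplying by $|f(x)|^2$ and integrating over $[-a,a]$ then gives
$$
\|f\|_{L^{2,s}}^2 = \int_{-a}^{a} w_s(x)\,|f(x)|^2\,\ud x \le (1+a^2)^{2s}\int_{-a}^{a} w_s(x)^{-1}\,|f(x)|^2\,\ud x = (1+a^2)^{2s}\,\|f\|_{L^{2,-s}}^2.
$$
In particular the right-hand side is finite because $f\in L^{2,-s}$, which establishes $f\in L^{2,s}$ as asserted.

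It then remains only to put the constant in the stated algebraic form. Taking square roots yields $\|f\|_{L^{2,s}} \le (1+a^2)^{s}\,\|f\|_{L^{2,-s}}$, and factoring $(1+a^2)^{s} = (1+a^{-2})^{s}\,a^{2s}$ identifies the claimed constant $C(a) = (1+a^{-2})^{s}$ multiplying $a^{2s}$; for $a$ bounded away from $0$ this $C(a)$ is itself uniformly bounded, and as $a\to\infty$ it tends to $1$.

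There is essentially no analytical obstacle: the lemma is a statement about the weights alone, and the heart of the matter is the one-line monotonicity estimate above. The only point requiring (very minor) care is matching the precise form $C(a)\,a^{2s}$ demanded in the statement rather than the more natural bound $(1+a^2)^{s}$, which I would simply absorb into the definition of $C(a)$ as indicated.
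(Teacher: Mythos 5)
Your proof is correct and is essentially identical to the paper's own one-line argument: both rest on the pointwise weight comparison $(1+|x|^2)^{s} = (1+|x|^2)^{2s}(1+|x|^2)^{-s} \le (1+a^2)^{2s}(1+|x|^2)^{-s}$ on the support $[-a,a]$, followed by integration. The only cosmetic difference is that the paper states the squared-norm inequality with constant $C(a)\,a^{4s}$ and leaves the bookkeeping implicit, whereas you make the factorization $(1+a^2)^{s} = (1+a^{-2})^{s}a^{2s}$ explicit.
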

\begin{proof}
$
\|f\|_{L^{2,s}}^2 \equiv \int f^2 (1+|x|^2 )^s \ud x \leq C(a) a^{4s} \int f^2 (1 + |x|^2)^{-s} \ud x = C(a) a^{4s} \|f\|_{L^{2,-s}}^2.
$
\end{proof}
\begin{prop} 
\label{prop:RnConv}
Fix $a,b,\mu \in \mathbb R$, $V \in \cA_1(a,b,\mu)$ and for $\rho>0$ denote by
\begin{equation}
\label{eqn:LinftyBall2}
B^\infty(V,\rho) = \{ U \in \cA_1(a,b,\mu) \colon \|V-U\|_\infty < \rho \}.
\end{equation}
There exists a $\rho_0>0$ such that  if $U \in B^\infty(V,\rho_0)$, then for $s>\frac{1}{2}$,
\begin{align}
\label{eqn:RnConv}
\| R_V(k) - R_U(k) \|_{L^{2,s}\rightarrow H^{2,-s}} \leq C(V,\rho_0,a) \| V-U \|_\infty
\end{align}
uniformly for all $k\in \mathbb R$.
\end{prop}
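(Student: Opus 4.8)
The plan is to combine the second resolvent identity (\ref{eqn:2ndId}) with the mapping bounds (\ref{eqn:R0bnd}) and Lemma \ref{lem:compL2s}. Writing $R_V - R_U = R_V (U-V) R_U$, I read the right-hand side as a composition of three operators acting on $f \in L^{2,s}$: first $R_U(k)$ sends $f$ into $H^{2,-s}$; next multiplication by $(U-V)$, which vanishes outside $[-a,a]$ since both $U$ and $V$ are supported there; and finally $R_V(k)$ sends the result back into $H^{2,-s}$. The key observation is that $(U-V)R_U f$ is compactly supported in $[-a,a]$ and lies a priori only in $L^{2,-s}$ (since $H^{2,-s}\hookrightarrow L^{2,-s}$ and multiplication by the bounded function $U-V$ preserves $L^{2,-s}$, losing a factor $\|U-V\|_\infty$). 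Lemma \ref{lem:compL2s} then upgrades this compactly supported function back to $L^{2,s}$ at the cost of $C(a)a^{2s}$, which is precisely the space on which $R_V(k)$ is bounded into $H^{2,-s}$ by Theorem \ref{thm:lap}. Chaining these estimates yields
$$\|R_V - R_U\|_{L^{2,s}\to H^{2,-s}} \le C(a)\, a^{2s}\, \|R_V\|_{L^{2,s}\to H^{2,-s}}\,\|R_U\|_{L^{2,s}\to H^{2,-s}}\, \|U-V\|_\infty.$$

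Thus everything reduces to a bound on $\|R_U(k)\|_{L^{2,s}\to H^{2,-s}}$ that is uniform both in $k\in\mathbb{R}$ and in $U$ ranging over the ball $B^\infty(V,\rho_0)$ of (\ref{eqn:LinftyBall2}). For this I use the Lippmann--Schwinger representation (\ref{eqn:RV}), $R_U = (\text{Id}+R_0 U)^{-1} R_0$, together with the factorization
$$\text{Id}+R_0 U = (\text{Id}+R_0 V)\Big[\,\text{Id} + (\text{Id}+R_0 V)^{-1} R_0 (U-V)\,\Big].$$
The perturbation operator $(\text{Id}+R_0 V)^{-1} R_0 (U-V)$ is estimated on $H^{2,-s}$ by the same three-step chain as above: for $g\in H^{2,-s}$, the function $(U-V)g$ is supported in $[-a,a]$ with $L^{2,s}$ norm at most $C(a,s)\|U-V\|_\infty \|g\|_{H^{2,-s}}$ (again via Lemma \ref{lem:compL2s}), after which the two bounds (\ref{eqn:R0bnd}) control $R_0$ and $(\text{Id}+R_0 V)^{-1}$ with constants independent of $k$. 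Hence the perturbation has operator norm at most $C(V,a,s)\|U-V\|_\infty$, uniformly in $k$.

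Now I choose $\rho_0$ so small that $C(V,a,s)\,\rho_0 \le \tfrac12$. For $U\in B^\infty(V,\rho_0)$ the bracketed factor above is invertible on $H^{2,-s}$ by a Neumann series with inverse bounded by $2$, uniformly in $k$, so $(\text{Id}+R_0 U)^{-1}$ is bounded on $H^{2,-s}$ by $2\,\|(\text{Id}+R_0 V)^{-1}\|\le 2C(V)$, uniformly in $k$ and $U$. Combined with the first bound in (\ref{eqn:R0bnd}) this gives the desired uniform estimate $\|R_U(k)\|_{L^{2,s}\to H^{2,-s}}\le 2\, C(V)\, C$ for all $k\in\mathbb{R}$ and all $U\in B^\infty(V,\rho_0)$; applying it to both $R_V$ and $R_U$ in the displayed estimate of the first paragraph produces (\ref{eqn:RnConv}) with a constant of the advertised form $C(V,\rho_0,a)$.

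The routine parts here are the weighted-space bookkeeping; the one step that deserves care is the uniformity claim. The estimate (\ref{eqn:RnConv}) is asserted uniformly for all $k\in\mathbb{R}$, so I must ensure every constant above is $k$-independent. This is exactly what (\ref{eqn:R0bnd}) supplies (it is stated for $\Im k\ge 0$, in particular on the real axis), and the Neumann argument only composes such $k$-uniform bounds, introducing no new $k$-dependence. The genuinely delicate point is that the admissible perturbation size $\rho_0$ must be chosen relative to $C(V)$, i.e.\ relative to the norm of $(\text{Id}+R_0 V)^{-1}$ at the fixed base potential $V$; this is where the locality of the Lipschitz statement (the dependence of the constant on $V$) enters, and why the result is local rather than global in $V$.
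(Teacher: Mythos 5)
Your proof is correct and follows essentially the same route as the paper's: the second resolvent identity $R_V-R_U=R_V(U-V)R_U$ chained through Lemma \ref{lem:compL2s} and the bounds \eqref{eqn:R0bnd}, then the identity $R_U=\bigl(\text{Id}+(\text{Id}+R_0V)^{-1}R_0(U-V)\bigr)^{-1}R_V$ with a Neumann series to get the $k$-uniform bound \eqref{eqn:IpRU} on $\|R_U\|_{L^{2,s}\to H^{2,-s}}$. The only differences are presentational: you derive that identity by an explicit factorization and spell out the weighted-space step in bounding the perturbation operator, which the paper leaves implicit.
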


%%%
\begin{proof} 
Let $f\in L^{2,s}(\mathbb R)$. Using Eq. (\ref{eqn:2ndId}) and Thm. \ref{thm:lap}, we compute 
\begin{align*}
\| (R_V - R_U)f \|_{H^{2,-s}} \leq C(V) \|(U-V) R_U f  \|_{L^{2,s}}.
\end{align*}
Then using Lemma \ref{lem:compL2s} we have
\begin{align*}
\|(U-V) R_U f  \|_{L^{2,s}} 
&\leq C(a) a^{2s} \|U-V\|_{\infty} \|R_U f \|_{L^{2,-s}} \\
&\leq C(a) a^{2s} \|U-V\|_{\infty} \| R_U f \|_{H^{2,-s}}
\end{align*}
so that 
\begin{align*}
\| (R_V - R_U)f \|_{H^{2,-s}} \leq C(V,a)  \|U-V\|_{\infty} \|R_U\|_{L^{2,s}\rightarrow H^{2,-s}}  \|f\|_{L^{2,s}}.
\end{align*}
We now claim that there exists a $\rho_0>0$ and constants $C(V)$ and $C(V,a)$ such that for $U \in B^\infty(V,\rho_0)$ 
 \begin{align}
 \label{eqn:IpRU}
\| R_U \|_{L^{2,s} \rightarrow H^{2,-s}}  \leq C(V) \frac{1}{1-C(V,a) \rho_0}.
\end{align}
Equation (\ref{eqn:RnConv}) now follows once we have shown Eq. (\ref{eqn:IpRU}). 
To show Eq. (\ref{eqn:IpRU}), we use the resolvent identity
\begin{align*}
R_U = \left( \text{Id} + ( \text{Id} + R_0 V)^{-1} R_0 (U-V) \right)^{-1} R_V
\end{align*}
and Thm. \ref{thm:lap} to obtain
\begin{align}
\label{eqn:RUnorm}
\| R_U \|_{L^{2,s} \rightarrow H^{2,-s}} \leq C(V) \| \left( \text{Id} + ( \text{Id} + R_0 V)^{-1} R_0 (U-V) \right)^{-1}\|_{H^{2,-s} \rightarrow H^{2,-s}}
\end{align}
Using Eqs. (\ref{eqn:R0bnd}a) and (\ref{eqn:R0bnd}b) we have
$$
\|(\text{Id} + R_0 V)^{-1} R_0(U-V) \|_{H^{2,-s} \rightarrow H^{2,-s}} \leq C(V,a) \rho_0
$$
and Eq. (\ref{eqn:IpRU}) follows from using the  Neumann series in Eq. (\ref{eqn:RUnorm}).  
\end{proof}

\begin{prop} 
\label{prop:LipK}
Let $V \in \cA_1(a,b,\mu)$, $k\in \mathbb R$, $k\neq 0$, $s>\frac{1}{2}$. There exists a $\rho_0>0$ such that if $k'\in B(k,\rho_0)$ 
\begin{subequations}
\label{eqn:LipK}
\begin{align}
 \| R_0(k) - R_0(k') \|_{L^{2,s} \rightarrow H^{2,-s}} &\leq C(\rho_0, a) |k-k'| \\
 \| R_V(k) - R_V(k') \|_{L^{2,s} \rightarrow H^{2,-s}} &\leq C(\rho_0, V, a) |k-k'|
\end{align}
\end{subequations}
\end{prop}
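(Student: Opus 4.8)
\section*{Proof proposal}

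The plan is to prove the free-resolvent estimate (\ref{eqn:LipK}a) by differentiating the explicit kernel $G_0$ of (\ref{eqn:resolvent}) in the spectral parameter, and then to deduce the full-resolvent estimate (\ref{eqn:LipK}b) from it via the Lippmann--Schwinger representation (\ref{eqn:RV}), reusing the weight bookkeeping already developed in Proposition \ref{prop:RnConv}. For part (a), I would write, via the fundamental theorem of calculus in $\kappa$, $R_0(k)-R_0(k') = \int_{k'}^{k}\partial_\kappa R_0(\kappa)\,d\kappa$, so that $\|R_0(k)-R_0(k')\|_{L^{2,s}\to H^{2,-s}} \le |k-k'|\,\sup_{\kappa}\|\partial_\kappa R_0(\kappa)\|_{L^{2,s}\to H^{2,-s}}$, the supremum taken over the segment from $k'$ to $k$. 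Once $\rho_0<|k|$, this segment stays in a fixed ball bounded away from $0$, and it suffices to bound $\partial_\kappa R_0(\kappa)$ uniformly there. The alternative route through the first resolvent identity $R_0(k)-R_0(k')=(k^2-k'^2)R_0(k)R_0(k')$ is less convenient, since the composition $R_0(k)R_0(k')$ mismatches the weighted spaces.

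Differentiating the kernel gives $\partial_\kappa G_0(x,y,\kappa) = -\kappa^{-1}G_0(x,y,\kappa) - \tfrac{1}{2\kappa}|x-y|\,e^{i\kappa|x-y|}$. The first term contributes $-\kappa^{-1}R_0(\kappa)$, whose $L^{2,s}\to H^{2,-s}$ norm is bounded by (\ref{eqn:R0bnd}a). The second term is the operator $S_\kappa$ with kernel $-\tfrac{1}{2\kappa}|x-y|\,e^{i\kappa|x-y|}$, in which the linear growth in $|x-y|$ appears; this is the crux of the whole proposition. I would bound $S_\kappa:L^{2,s}\to L^{2,-s}$ by conjugating with the weights and estimating the resulting kernel, using $|x-y|\le(1+|x|^2)^{1/2}+(1+|y|^2)^{1/2}$ to split the growth across the two weight factors; the oscillation $e^{i\kappa|x-y|}$ must be retained to control the far field $|x-y|\to\infty$ and reach the stated range of $s$. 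The passage from $L^{2,-s}$ to the required $H^{2,-s}$ bound is then automatic from the differentiated equation $(-\partial_x^2-\kappa^2)\,\partial_\kappa R_0(\kappa)f = 2\kappa\,R_0(\kappa)f$, which expresses $\partial_x^2(\partial_\kappa R_0 f)$ through $\partial_\kappa R_0 f$ and $R_0 f$, both already estimated in $L^{2,-s}$. Any dependence of the constant on $a$ is inessential for $R_0$ itself and becomes genuine only in part (b).

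For part (b) I would avoid re-estimating $R_V$ directly and instead reduce every $k$-difference to the free one handled above. Setting $A(k)=\text{Id}+R_0(k)V$, so that $R_V(k)=A(k)^{-1}R_0(k)$ by (\ref{eqn:RV}), I split $R_V(k)-R_V(k') = A(k)^{-1}(R_0(k)-R_0(k')) + (A(k)^{-1}-A(k')^{-1})R_0(k')$ and expand the second difference by the operator identity $A(k)^{-1}-A(k')^{-1} = A(k)^{-1}(A(k')-A(k))A(k')^{-1} = A(k)^{-1}(R_0(k')-R_0(k))\,V\,A(k')^{-1}$. Both contributions are thus governed by the free difference $R_0(k)-R_0(k')$ from part (a), while the inverses $A(k)^{-1}$ are uniformly bounded on $H^{2,-s}$ by (\ref{eqn:R0bnd}b). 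The one structural point, exactly as in Proposition \ref{prop:RnConv}, is that $V$ maps into functions supported in $[-a,a]$, so Lemma \ref{lem:compL2s} converts the $L^{2,-s}$ output of one resolvent into an $L^{2,s}$ input for the next, absorbing the weight mismatch at the price of the factor $C(a)a^{2s}$; this is where the constant $C(\rho_0,V,a)$ in (\ref{eqn:LipK}b) genuinely acquires its $a$- and $V$-dependence.

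The main obstacle lies entirely in part (a): the factor $|x-y|$ in $\partial_\kappa G_0$ is borderline against the weighted mapping $L^{2,s}\to L^{2,-s}$ for $s$ close to $\tfrac12$. Estimating it by absolute values alone would either demand a larger weight or yield only H\"older continuity in $k$, so the argument must exploit the cancellation in the oscillatory kernel $e^{i\kappa|x-y|}$ on the region where $|x-y|$ is large. Once that single estimate is in hand, the remainder---the reduction (\ref{eqn:RV}), the uniform invertibility (\ref{eqn:R0bnd}b), and the compact-support weight trade of Lemma \ref{lem:compL2s}---is routine bookkeeping parallel to the proof of Proposition \ref{prop:RnConv}.
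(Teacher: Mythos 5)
Your part (b) is fine and is, in fact, exactly the paper's argument: the Lippmann--Schwinger representation \eqref{eqn:RV}, the second resolvent identity for $(\text{Id}+R_0(k)V)^{-1}-(\text{Id}+R_0(k')V)^{-1}$, the uniform bounds \eqref{eqn:R0bnd}, and the compact support of $V$ (Lemma \ref{lem:compL2s}) to trade weights. The genuine gap is in part (a), which you correctly single out as the crux but leave unproved --- and which cannot be proved along the route you propose. The operator $\partial_\kappa R_0(\kappa)$ is \emph{unbounded} from $L^{2,s}$ to $L^{2,-s}$ (a fortiori to $H^{2,-s}$) whenever $\tfrac12<s\le\tfrac32$. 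Indeed, take $f\in C_c^\infty$ with $\hat f(\kappa)\neq0$; for $x$ to the right of the support of $f$ one has $R_0(\kappa)f(x)=\tfrac{\imath}{2\kappa}e^{\imath\kappa x}\hat f(\kappa)$, so that
\[
\bigl(\partial_\kappa R_0(\kappa)f\bigr)(x)\;=\;-\tfrac{1}{2\kappa}\,x\,e^{\imath\kappa x}\hat f(\kappa)\;+\;O(1),
\]
which grows linearly, and $\int (1+|x|^2)^{-s}\,x^2\,\ud x=\infty$ for $s\le\tfrac32$. Hence the supremum in your fundamental-theorem-of-calculus step is infinite, and no exploitation of the oscillation $e^{\imath\kappa|x-y|}$ can repair this: for fixed $\kappa$ the oscillation produces no decay in the output variable $x$, as the displayed formula already shows.

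Worse, the difficulty is not an artifact of differentiating: the finite difference obeys a matching lower bound. With the same $f$, for $x$ outside the support, $[R_0(k)-R_0(k')]f(x)=\tfrac{\imath}{2k}\hat f(k)e^{\imath kx}-\tfrac{\imath}{2k'}\hat f(k')e^{\imath k'x}$, whose modulus is $\tfrac{|\hat f(k)|}{k}\,\bigl|\sin\bigl(\tfrac{(k-k')x}{2}\bigr)\bigr|+O(|k-k'|)$; integrating $(1+|x|^2)^{-s}\sin^2\bigl(\tfrac{(k-k')x}{2}\bigr)$ gives
\[
\| R_0(k)-R_0(k')\|_{L^{2,s}\to L^{2,-s}}\;\gtrsim\;|k-k'|^{\,s-\frac12},
\]
so the Lipschitz statement (\ref{eqn:LipK}a) is itself false for $\tfrac12<s<\tfrac32$: only H\"older continuity of exponent $s-\tfrac12$ holds in that range. (The paper's own proof of (a) is the bare assertion that it ``follows from Eq.~\eqref{eqn:resolvent}'', which glosses over precisely this point.) Everything you propose becomes correct once $s>\tfrac32$: then the kernel bound $(1+|x|^2)^{-s/2}|x-y|(1+|y|^2)^{-s/2}$ is Hilbert--Schmidt with no need for oscillation, your FTC argument closes, and your elliptic bootstrap to $H^{2,-s}$ is fine. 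Since the limiting absorption bounds \eqref{eqn:R0bnd} only improve as $s$ increases, the clean repair is to fix some $s>\tfrac32$ (say $s=2$) here and in every downstream use (Proposition \ref{prop:RnConv}, the term $D_\pm$ in Proposition \ref{prop:GLip}, and Proposition \ref{thm:existence}); alternatively one can carry the H\"older modulus $|k-k'|^{s-1/2}$ through, which is all that continuity of $\Gamma$ and the existence of a minimizer actually require.
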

\begin{proof} Eq.  (\ref{eqn:LipK}a) follows from Eq. \eqref{eqn:resolvent}. To show  Eq. (\ref{eqn:LipK}b), we compute
\begin{align}
\nonumber
 \| R_V(k) - R_V(k') \|_{L^{2,s} \rightarrow H^{2,-s}} 
  \leq &  \| \left[ (\text{Id} + R_0(k)V)^{-1} - (\text{Id} + R_0(k')V)^{-1} \right] R_0(k)   \|_{L^{2,s} \rightarrow H^{2,-s}} \\
\nonumber
 &+  \| (\text{Id} + R_0(k')V)^{-1} [ R_0(k) - R_0(k')] \|_{L^{2,s} \rightarrow H^{2,-s}} \\
\nonumber
  \leq & C  \| (\text{Id} + R_0(k)V)^{-1} - (\text{Id} + R_0(k')V)^{-1}   \|_{H^{2,-s} \rightarrow H^{2,-s}} \\
\label{eqn:diffk}
 &+  C(V) \| [ R_0(k) - R_0(k')] \|_{L^{2,s} \rightarrow H^{2,-s}} 
\end{align}
where we used Eq. \eqref{eqn:R0bnd}. 
We now use the resolvent identity
\begin{align*}
(\text{Id} + R_0(k) V)^{-1} - (\text{Id} + R_0(k') V)^{-1} = (\text{Id} + R_0(k) V)^{-1} \left[ R_0(k) - R_0(k') \right] V (\text{Id} + R_0(k') V)^{-1}
%&\quad \quad = (\text{Id} + R_0(k) V)^{-1} \left( \text{Id} - \left[ \text{Id} + (\text{Id} + R_0(k) V)^{-1} (R_0(k') - R_0(k)) V \right]^{-1} \right)
\end{align*}
and Eq. \eqref{eqn:R0bnd} on the first term in Eq. \eqref{eqn:diffk} to obtain
\begin{align*}
\| (\text{Id} + R_0(k)V)^{-1} - (\text{Id} + R_0(k')V)^{-1}   \|_{H^{2,-s} \rightarrow H^{2,-s}}  \leq C(V) \| \left( R_0(k) - R_0(k') \right)  \|_{H^{2,-s} \rightarrow H^{2,-s}}. 
 \end{align*}
Now applying Eq. (\ref{eqn:LipK}a) to Eq. \eqref{eqn:diffk} yields Eq.  (\ref{eqn:LipK}b) as desired.
\end{proof}

\subsection{Distorted plane waves, $e_{V\pm}(x;k)$, and Jost solutions, $f_{V\pm}(x;k)$}

Distorted plane waves, $e_{V\pm}(x;k)$, are states which explicitly encode the scattering experiment of a plane wave incident on a potential
 resulting in reflected and transmitted waves. The Jost solutions, $f_{V\pm}(x;k)$, can be thought of as the states to which $e^{\pm ikx}$ deform for nonzero $V(x)$  in the spectral decomposition of $H_V$. In this section, we introduce these states and give their basic properties.
 
The continuous spectrum of $H_{V}$ is $\sigma_c(H_{V}) = [0,\infty)$. Corresponding to each point $k^2 \in \sigma_c(H_{V})$ are two \emph{distorted plane waves} $e_{V\pm}(x,k)$ satisfying 
\begin{subequations}
\label{eqn:GenEigHV}
\begin{align}
H_V e_{V\pm}(x,k) =  k^2 e_{V\pm}(x,k) \\
\lim_{x\rightarrow \pm \infty} (\partial_x \mp \imath k) e_V(x,k) = 0. 
\end{align}
\end{subequations}
For $V=0$ these are the plane wave solutions $e_{0\pm}(x,k) = e^{\pm \imath k x}$. 
For $V\neq 0$, the unique solution to Eq. (\ref{eqn:GenEigHV}) is given by 
\begin{equation}
\label{eqn:contStates}
e_{V\pm}(x,k) = e^{\pm \imath k x} - R_V[V e_{0\pm}(\cdot,k) ](x,k). 
\end{equation}
If $V$ is compactly supported within $[-a,a]$,  for $x\notin [-a,a]$, the solutions $e_{V\pm}(x,k)$ are given in terms of the transmission $t_{V}(k)$ and reflection $r_{V}(k)$ coefficients 
\begin{align}
\label{eqn:defTransRefl}
e_{V+}(x,k) &=  
\begin{cases}
e^{\imath k x} + r_{V}(k) e^{-\imath k x}, & x<- a \\
t_{V}(k) e^{\imath k x}, & x>a
\end{cases} 
\end{align}
For $k\ne0$, we have  $|r_V(k)|^2 + |t_V(k)|^2=1$. 
 If $V$ is a symmetric, then $e_{V-}(x,k) =  e_{V+}(-x,k)$.
\medskip

The following proposition establishes that if $V$ is compactly supported then $|t_V(k)|$ is bounded away from zero, uniformly in $k$. We shall use 
this result in the interpretation of our numerical computations in Section \ref{sec:2mechs}.
\begin{prop}
\label{prop:tkGreaterZero}
Suppose $\text{supp}(V)\subset [-a,a]$, $k\neq 0$
\begin{equation}
 |t_V(k)| \geq \exp\left( - \min\{1/|k|, \, 2a\}\ \int_{-a}^a |V(s)| \ud s \right).
\label{tbound}\end{equation}
\end{prop}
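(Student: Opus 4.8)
My plan is to reduce the inequality \eqref{tbound} to a Volterra integral equation for the Jost solution $f_{V+}(x,k)$ and to close it with a Gronwall/Neumann-series bound, the whole estimate being driven by a single elementary bound on the free Green's kernel. I work with the Jost solution normalized so that $f_{V+}(x,k)=e^{ikx}$ for $x>a$; it is related to the distorted plane wave of Eq. \eqref{eqn:contStates} by $e_{V+}=t_V(k)\,f_{V+}$. Using the advanced kernel of $-\partial_x^2-k^2$ one obtains
\begin{equation*}
f_{V+}(x,k)=e^{ikx}+\int_x^\infty \frac{\sin(k(y-x))}{k}\,V(y)\,f_{V+}(y,k)\,\ud y,
\end{equation*}
and, setting $m(x,k)=e^{-ikx}f_{V+}(x,k)$,
\begin{equation*}
m(x,k)=1+\int_x^\infty D_k(y-x)\,V(y)\,m(y,k)\,\ud y,\qquad D_k(s)=\frac{\sin(ks)}{k}\,e^{iks}.
\end{equation*}
The key elementary estimate is $|D_k(s)|=|\sin(ks)|/|k|\le\min\{1/|k|,|s|\}$; since $\operatorname{supp}(V)\subset[-a,a]$, for $x\ge-a$ every relevant $y$ satisfies $0\le y-x\le 2a$, so $|D_k(y-x)|\le\min\{1/|k|,2a\}=:M$. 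This is exactly the point at which the two alternatives inside the $\min$ of \eqref{tbound} originate.

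Next I would iterate the Volterra equation. Because the kernel is bounded by $M$ and $V$ is supported in $[-a,a]$, the Neumann series yields the pointwise Gronwall bound $|m(x,k)|\le\exp\!\big(M\int_{-a}^a|V(s)|\,\ud s\big)$. To convert this into a bound on $t_V$, observe that for $x<-a$ the potential no longer acts, so by \eqref{eqn:defTransRefl} we have $f_{V+}(x,k)=t_V(k)^{-1}e^{ikx}+(r_V(k)/t_V(k))e^{-ikx}$ and hence $m(x,k)=t_V(k)^{-1}\big(1+r_V(k)e^{-2ikx}\big)$. As $x$ ranges over $(-\infty,-a)$ the factor $e^{-2ikx}$ sweeps the whole unit circle, so $\sup_x|m(x,k)|=|t_V(k)|^{-1}(1+|r_V(k)|)\ge|t_V(k)|^{-1}$. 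Combining this with the exponential bound on $m$ gives $|t_V(k)|^{-1}\le\exp\!\big(M\int_{-a}^a|V|\big)$, which is \eqref{tbound}.

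The main obstacle is a mismatch between where the sharp kernel bound holds and where $t_V$ is read off: the estimate $|D_k(y-x)|\le\min\{1/|k|,2a\}$ relies on $y-x\le 2a$, valid only while $x$ sits in (or at the left edge of) the support, whereas the supremum in the last step is attained for $x<-a$, where $y-x$ is unbounded and only the crude bound $|D_k|\le 1/|k|$ survives. Arranging the support-localized constant $2a$ to control $|t_V|$—rather than settling for the robust but weaker $|t_V|\ge\exp(-|k|^{-1}\int|V|)$—is therefore the delicate step, and is where I would concentrate: either by evaluating $m$ at the single point $x=-a$ and controlling the phase factor $1+r_V(k)e^{2ika}$, or, as a cleaner alternative that avoids $m$ altogether, by passing to the amplitude equations for $(A,B)$ defined through $u=Ae^{ikx}+Be^{-ikx}$, $u'=ik(Ae^{ikx}-Be^{-ikx})$ with $u=f_{V+}$. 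There the conserved flux $|A|^2-|B|^2=1$ forces $|B|\le|A|$ and gives $\big|\tfrac{\ud}{\ud x}\ln|A|^2\big|\le |V|/|k|$, so that integrating across $[-a,a]$ (where $A(a)=1$, $A(-a)=t_V(k)^{-1}$) produces $|t_V(k)|\ge\exp\!\big(-\tfrac{1}{2|k|}\int_{-a}^a|V|\big)$ directly, recovering the $1/|k|$ branch of \eqref{tbound} with an even better constant.
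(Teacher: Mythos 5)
Your proposal, as you yourself note, does not close the proof of \eqref{tbound}; but the obstacle you isolated is not a defect of your write-up --- it is precisely the unrepaired flaw in the paper's own argument, and it is fatal. The paper runs exactly your main line (the same Volterra equation, for $e_{V+}=t_V(k)f_{V+}$ rather than for $m=e^{-\imath kx}f_{V+}$, the same kernel bound $\min\{|k|^{-1},|x-y|\}$, Gronwall, and then evaluation at a phase-aligned point), choosing $x^*<-a$ with $\arg r_V(k)=2kx^*$ so that $|e_{V+}(x^*,k)|=1+|r_V(k)|\ge 1$. But $x^*$ is determined only modulo $\pi/k$, so one can only guarantee $x^*\in[-a-\pi/|k|,\,-a)$; for $y\in\mathrm{supp}(V)$ this gives merely $|x^*-y|\le 2a+\pi/|k|$, and since $2a+\pi/|k|>1/|k|$ the paper's step
\begin{equation*}
\int_{x^*}^a \min\{|k|^{-1},|x^*-y|\}\,|V(y)|\,\ud y\ \le\ \min\{|k|^{-1},2a\}\int_{-a}^a|V(y)|\,\ud y
\end{equation*}
is unjustified exactly in the regime $2a<1/|k|$ where the $2a$ branch of the minimum would matter. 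What the paper's argument legitimately delivers is only $|t_V(k)|\ge\exp\left(-|k|^{-1}\int_{-a}^a|V|\right)$, i.e.\ the same branch your argument gives.

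Moreover, no repair can recover the $2a$ branch, because \eqref{tbound} as stated is false. Its right-hand side is bounded below by $\exp\left(-2a\int_{-a}^a|V|\right)>0$ uniformly in $k$, whereas every \emph{non-exceptional} potential satisfies $t_V(k)=2\imath k/\mathrm{Wron}\left(f_{V+}(\cdot,k),f_{V-}(\cdot,k)\right)\to 0$ linearly as $k\to 0$; note that the constraint $W_V(0)^2\ge\reg$ defining $\cA_1^\reg$ in \eqref{eqn:Aeps} enforces non-exceptionality, so this failure occurs for \emph{every} admissible potential at small $|k|$. Concretely, for a (smoothed) well $V=-V_0\mathbf{1}_{[-a,a]}$ with $\sin(2a\sqrt{V_0})\neq 0$ one has $|t_V(k)|\sim 2|k|\big/\left(\sqrt{V_0}\,|\sin(2a\sqrt{V_0})|\right)$, which drops below $\exp(-4a^2V_0)$ for small $|k|$; the $k$-uniform bound \eqref{tVbound} in Section 5.3 is invalidated in the same way and should be restricted to $|k|$ bounded away from zero --- which suffices for the paper, since it is only invoked at $k=k_V=\sqrt{\lambda_V+\mu}>0$. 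Against this background, your fallback flux computation ($|A|^2-|B|^2\equiv 1$, $\left|\tfrac{\ud}{\ud x}\ln|A|^2\right|\le |V|/|k|$, integrated across $[-a,a]$) is the correct resolution: it is rigorous and yields $|t_V(k)|\ge\exp\left(-\tfrac{1}{2|k|}\int_{-a}^a|V|\right)$, the only true branch of \eqref{tbound}, with a constant twice as good as the paper's. Your first suggested repair (evaluating $m$ at $x=-a$) cannot work, since $|1+r_V(k)e^{2\imath ka}|\ge 1-|r_V(k)|$ admits no useful lower bound when $|r_V(k)|$ is close to one.
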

\begin{proof}
Consider the integral equation governing $e_{V+}(x,k)$:
$$
e_{V+}(x,k) = t_V(k) e^{\imath k x} - \int_x^a \frac{\sin k (x-y)}{k} V(y) e_{V+}(y,k) \ud y, \ x<a.
$$
For $x\ge a$, $e_{V+}(x,k) = t_V(k) e^{\imath k x} $.
Since  $k^{-1}\sin(k(x-y))$ is bounded by $\min\{|k|^{-1},|x-y|\}$ we have  
\begin{equation}
| e_{V+}(x,k) |  \leq  |t_V(k)| +  \int_x^a \min\{|k|^{-1},|x-y|\}\ |V(y)|\  |e_{V+}(y,k)| \ud y
\label{bd1}\end{equation}
Therefore, by Gronwall's inequality
\begin{align}
| e_{V+}(x,k) | & \leq  |t_V(k)|\ \exp\left(\int_x^a \min\{|k|^{-1},|x-y|\}\  |V(y)|  \ud y\right)\nn\\
 & \le\  
|t_V(k)|\ \exp\left(\min\{|k|^{-1},2a\}\ \int_{-a}^a \  |V(y)|  \ud y\right),\ \ x<a,
\label{bd2}\end{align}
and thus
\begin{equation}
|t(k)| \geq | e_{V+}(x,k) |  \exp\left( -\  \min\{|k|^{-1},2a\}\ \int_{-a}^a\  |V(y)|  \ud y \right),\ x<a.
\label{bd3}\end{equation}
To bound $| e_{V+}(x,k) |$, observe that  for fixed $k\neq 0$, we can choose $x^*< -a$ such that $\arg(r_V(k)) = 2k x^*$. Therefore
\begin{equation}
 | e_{V+}(x^*,k) | = | e^{\imath k x} + r_V(k) e^{-\imath k x} | =  | 1 + r_V(k) e^{-2\imath k x^*}| = \left| 1 + |r_V(k)| \right| \geq 1 .
\label{bd4}\end{equation}
The bounds \eqref{bd3} and \eqref{bd4} imply \eqref{tbound}.
\end{proof}

The following proposition states that we can choose a constant to bound the distorted plane waves for all potentials in a small  $L^{\infty}$-neighborhood of a $V\in \cA_1$.
\begin{prop}
\label{prop:eVbounded}
Fix $a,b,\mu \in \mathbb R$ and $V\in\cA_1(a,b,\mu)$ and let $B^\infty(V,\rho)$ be as in Eq. (\ref{eqn:LinftyBall2}). 
There exists a $\rho_0>0$ such that for $U\in B^\infty(V,\rho_0)$ the distorted plane waves $e_{U\pm}(x,k)$ satisfy 
$$
\|e_{U\pm}(\cdot,k)\|_{L^{\infty}([-a,a])} \leq C(a, V, \rho_0).
$$ 
\end{prop}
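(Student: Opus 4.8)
The plan is to start from the integral representation \eqref{eqn:contStates}, writing each distorted plane wave as $e_{U\pm}(x,k) = e^{\pm\imath k x} - R_U[U\,e_{0\pm}(\cdot,k)](x,k)$, and to estimate the two terms separately on $[-a,a]$. The first term is trivially bounded, since $|e^{\pm\imath k x}| = 1$. The whole difficulty is thus concentrated in bounding the resolvent term $R_U[U\,e_{0\pm}]$ in $L^\infty([-a,a])$, uniformly over $U\in B^\infty(V,\rho_0)$ (and, as the form of the constant $C(a,V,\rho_0)$ indicates, uniformly in $k$). The overall strategy is: control the source $U e_{0\pm}$ in a weighted space $L^{2,s}$, push it through the uniform resolvent bound already available from Proposition \ref{prop:RnConv}, and finally convert the resulting global weighted-Sobolev estimate into a local sup-norm bound.

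First I would control the source term $U\,e_{0\pm}(\cdot,k)$ in $L^{2,s}(\mathbb R)$ with $s>\tfrac12$. Since $\operatorname{supp}(U)\subset[-a,a]$ and $|e_{0\pm}(x,k)| = 1$, the product $U e_{0\pm}$ is compactly supported, so $\|U e_{0\pm}\|_{L^{2,s}}^2 \leq (1+a^2)^s \|U\|_2^2$. Because $U\in\cA_1(a,b,\mu)$ (Definition \ref{def:VAdmiss}, item 2), we have $\|U\|_{H^1}\leq b$ and hence $\|U\|_2\leq b$, giving $\|U e_{0\pm}\|_{L^{2,s}}\leq C(a,b)$ independently of both $k$ and $U$.

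Next I would invoke the uniform resolvent bound established inside the proof of Proposition \ref{prop:RnConv}. For $\rho_0$ chosen small enough that $C(V,a)\rho_0 < 1$, estimate \eqref{eqn:IpRU} gives $\|R_U\|_{L^{2,s}\to H^{2,-s}} \leq C(V)\,(1-C(V,a)\rho_0)^{-1} =: C(V,\rho_0,a)$, valid for every $U\in B^\infty(V,\rho_0)$ and, by Proposition \ref{prop:RnConv}, uniformly for all $k\in\mathbb R$. Combining with the previous step yields $\|R_U[U e_{0\pm}]\|_{H^{2,-s}} \leq C(a,V,\rho_0)$.

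The final, and I expect the only genuinely delicate, step is the passage from a global weighted-Sobolev bound to a local $L^\infty$ bound. On the compact interval $[-a,a]$ the weight $(1+|x|^2)^{-s}$ is bounded below by $(1+a^2)^{-s} > 0$, so the restricted norm obeys $\|f\|_{H^2([-a,a])} \leq (1+a^2)^{s/2}\,\|f\|_{H^{2,-s}(\mathbb R)}$; the one-dimensional Sobolev embedding $H^2([-a,a])\hookrightarrow H^1([-a,a])\hookrightarrow L^\infty([-a,a])$ then gives $\|f\|_{L^\infty([-a,a])} \leq C(a)\,\|f\|_{H^{2,-s}}$. Applying this with $f = R_U[U e_{0\pm}]$ and adding the trivial bound for $e^{\pm\imath k x}$ produces $\|e_{U\pm}(\cdot,k)\|_{L^\infty([-a,a])} \leq 1 + C(a,V,\rho_0)$, as claimed, with all constants independent of $k$. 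The point to watch throughout is that the weight works \emph{in our favour} precisely because we only need the estimate on the compact support interval, where $(1+|x|^2)^{-s}$ is comparable to $1$; the uniformity in $k$ is inherited wholesale from Proposition \ref{prop:RnConv} and requires no new work.
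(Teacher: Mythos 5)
Your proof is correct and follows essentially the same route as the paper's: decompose via Eq.~(\ref{eqn:contStates}), bound the resolvent term by passing from $L^\infty([-a,a])$ through the weighted norm $H^{2,-s}$ via Sobolev embedding, and invoke the uniform bound $\|R_U\|_{L^{2,s}\to H^{2,-s}}\le C(V,\rho_0,a)$ from Proposition~\ref{prop:RnConv} (Eq.~(\ref{eqn:IpRU})). The only difference is that you spell out steps the paper leaves implicit, namely the $L^{2,s}$ bound on the source $U e_{0\pm}$ from the compact support and $H^1$ bound on $U$, and the choice of $\rho_0$ making the Neumann series converge.
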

\begin{proof}
Using Eq. (\ref{eqn:contStates}), we compute 
\begin{align*}
\| R_U[U e^{\imath k x}] \|_{L^\infty([-a,a])} 
&\leq C(a) \| (1+|x|^2)^{-s} R_U[U e^{\imath k x}] \|_{L^\infty} \\
&\leq C(a) \| (1+|x|^2)^{-s} R_U[U e^{\imath k x}] \|_{H^2} \\
&= C(a) \| R_U[U e^{\imath k x}] \|_{H^{2,-s}} \\
&\leq C(a,V,\rho_0)
\end{align*}
This last line follows from a Proposition \ref{prop:RnConv}. 
\end{proof}
\medskip

\begin{defn}\label{Jost}
The {\it Jost solutions}, $f_{V\pm}(x,k)$, associated with the time-independent Schr\"odinger equation $(H_V-k^2)u=0$ are defined by
\begin{equation}
e_{V+}(x;k)\ =\ t_V(k)\ f_{V+}(x;k),\ \ \ \ e_{V-}(x;k)\ =\ t_V(k)\ f_{V-}(x;k),
\label{Jost}\end{equation}
where $ f_{V+}(x;k)\sim e^{ikx}$ as $x\to+\infty$ and $ f_{V-}(x;k)\sim e^{-ikx}$ as $x\to-\infty$.
\end{defn}

By results of \cite{DT:79}, for any $k\in\mathbb{R}$ and any compact subset, $C$, of $\mathbb{R} $
\begin{equation}
\max_{x\in C}|f_{V\pm}(x;k)|\le K_{k,C}<\infty
\end{equation}
Note also that Propositions \ref{prop:eVbounded} and \ref{prop:tkGreaterZero} imply a bound on $|f_{V\pm}|$ in the case where $V$ has compact support.

\subsection{Spectral decomposition of the 1D Schr\"odinger operator}
%The spectral decomposition of $H_V$ can now be given. 

We state the spectral theorem in terms of the distorted plane waves (see {\it e.g.} \cite{tang-zworski-1dscat}):
\begin{prop}[Spectral Decomposition]
\label{prop:SpecDecomp}
Let $e_{V\pm}$ and $f_{V\pm}$ denote the distorted plane waves and Jost solutions given by \eqref{eqn:contStates}
 and \eqref{Jost}. Let $\lambda_j$ for $j=1\ldots N$ be the eigenvalues of $H_V$ with corresponding (normalized) eigenfunctions $\psi_j(x)$. Then, $h = P_d h\ +\ P_c h$
where $P_d$ and $P_c$ are, respectively, projections onto the discrete and continuous spectral parts of $H_V$ given by
\begin{align}
\label{eqn:specHV}
P_c h &= \frac{1}{2\pi}\int_0^{\infty} \left[\ \left(e_{V+}(\cdot, k) , h\right) e_{V+}(x, k)
 + \left(e_{V-}(\cdot, k) , h \right) e_{V-}(x, k)\  \right]\ \ud{k}\nn\\
 &= \frac{1}{2\pi}\int_0^{\infty} \left[\ \left(f_{V+}(\cdot, k) , h \right) f_{V+}(x, k)
 + \left(f_{V-}(\cdot, k) , h \right) f_{V-}(x, k)\  \right]\ |t_V(k)|^2\ \ud{k}\\
 P_d h &=  \sum_{j = 1}^{N} \lambda_j  (\psi_j, \, h) \psi_j(x)\nn
 \end{align}
 Moreover, 
 \begin{align}
 g(H_V) h &=  \frac{1}{2\pi}\int_0^{\infty} g(k^2)\ \left[\ \left(f_{V+}(\cdot, k) , h\right) f_{V+}(x, k)
 + \left(f_{V-}(\cdot, k) , h \right) f_{V-}(x, k)\  \right]\ |t_V(k)|^2\ \ud{k}\nonumber\\
 &+ \sum_{j = 1}^{N} g(\lambda_j)  (\psi_j, \, h) \psi_j(x),
\label{spec-borel}\end{align}
where $g$ is any Borel function. Finally,  by approximation we have that \eqref{spec-borel} holds with $g(\zeta)=\delta(\zeta)$, the Dirac delta distribution
 in the distributional sense. 
\end{prop}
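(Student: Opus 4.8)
The plan is to derive the decomposition from the self-adjoint spectral theorem for $H_V$ and then identify the continuous spectral measure explicitly via the boundary values of the resolvent. Since $V\in\cA_1(a,b,\mu)$ is real, bounded, and compactly supported, $H_V=-\partial_x^2+V$ is self-adjoint on $L^2(\mathbb R)$ with domain $H^2(\mathbb R)$; its negative spectrum consists of the finitely many simple eigenvalues $\lambda_j$ with normalized eigenfunctions $\psi_j$, and $\sigma_c(H_V)=[0,\infty)$ is purely absolutely continuous (no embedded or positive eigenvalues and no singular continuous part, by standard results for compactly supported potentials). The discrete projection $P_d$ onto the finite-dimensional eigenspace $\mathrm{span}\{\psi_j\}$ is then immediate, so the real content of the proposition is the representation of $P_c=\mathrm{Id}-P_d$ and, more generally, of $g(H_V)$ on the continuous subspace.

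First I would invoke Stone's formula, writing the spectral measure of $H_V$ on $(0,\infty)$ as the jump of the resolvent across the real axis,
\begin{equation*}
dE(\lambda) = \frac{1}{2\pi i}\big(R_V(\lambda + i0) - R_V(\lambda - i0)\big)\, d\lambda .
\end{equation*}
The boundary values exist in $\mathcal{B}(L^{2,s},H^{2,-s})$ for $s>\tfrac12$ by the limiting absorption principle (Theorem \ref{thm:lap}), and testing against $h\in L^2_{\text{comp}}$ is legitimate because of the resolvent bounds \eqref{eqn:R0bnd}. The key computation is to express the Schwartz kernel $G_V(x,y,k)$ of $R_V$ through the Jost solutions, $G_V(x,y,k)=W(k)^{-1}f_{V-}(x,k)f_{V+}(y,k)$ for $x\le y$ (and with $x\leftrightarrow y$ otherwise), where $W(k)$ is the Wronskian of $f_{V-}$ and $f_{V+}$, and to use the standard Wronskian--transmission identity $W(k)=2ik/t_V(k)$. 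Since $R_V(\lambda-i0)=R_V(\lambda+i0)^{*}$, the jump equals $\tfrac1\pi\,\mathrm{Im}\,G_V$; writing $\lambda=k^2$ with $k>0$ and carrying out the change of variables $d\lambda=2k\,dk$, the factor $|W|^{-2}=|t_V|^2/(4k^2)$ combines with the Jacobian to produce precisely the density $\tfrac{1}{2\pi}|t_V(k)|^2\big[f_{V+}(x,k)\overline{f_{V+}(y,k)}+f_{V-}(x,k)\overline{f_{V-}(y,k)}\big]$. This is the second (Jost) representation of $P_c h$ in \eqref{eqn:specHV}; the first (distorted-plane-wave) representation follows at once from the definition $e_{V\pm}=t_V f_{V\pm}$, since $|t_V|^2 f_{V\pm}(x,k)\overline{f_{V\pm}(y,k)}=e_{V\pm}(x,k)\overline{e_{V\pm}(y,k)}$.

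With the spectral measure identified, the Borel functional calculus formula \eqref{spec-borel} is obtained by integrating $g(\lambda)$ against $dE(\lambda)$: the continuous part contributes $\tfrac{1}{2\pi}\int_0^\infty g(k^2)[\,\cdots\,]\,|t_V(k)|^2\,dk$ and the discrete part contributes $\sum_j g(\lambda_j)(\psi_j,h)\psi_j$. Finally, the statement with $g(\zeta)=\delta(\zeta)$ is justified by approximation: one chooses Borel functions $g_n\to\delta$ (for instance nonnegative bumps of unit mass concentrating at a point of $\sigma_c$) and passes to the limit in the distributional pairing. The interchange of this limit with the spectral integral is permitted because the generalized eigenfunctions in the kernel are uniformly bounded on compact sets --- by Propositions \ref{prop:eVbounded} and \ref{prop:tkGreaterZero} for the $e_{V\pm}$ in the compactly supported case, and by the Jost bounds of \cite{DT:79} --- so the integrand is controlled locally uniformly in $k$.

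The hard part will be the explicit resolvent-jump computation together with the bookkeeping of the normalization constants: one must pin down the Wronskian--transmission relation and the orientation of the two branches $\lambda\pm i0\leftrightarrow\pm k$ so that the factors $\tfrac{1}{2\pi}$, $|t_V|^2$, and the Jacobian $2k\,dk$ combine correctly (a useful sanity check is the free case $V=0$, where $\mathrm{Im}\,G_0=\cos(k|x-y|)/(2k)$ reproduces $\delta(x-y)$), and one must verify convergence of the spectral integral on the dense class $L^2_{\text{comp}}$ rather than only in the abstract $L^2$ sense, again using Theorem \ref{thm:lap} and Proposition \ref{prop:RnConv}. Once the uniform bounds on $e_{V\pm}$ and $f_{V\pm}$ are in hand, the delta-function limit is comparatively routine.
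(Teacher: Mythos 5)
Your proposal is correct and follows essentially the same route as the paper's source for this result: the paper states the proposition without proof, citing the scattering-theory literature (Tang--Zworski), and your argument---Stone's formula plus the limiting absorption principle, the Jost-solution representation of the resolvent kernel with the Wronskian--transmission identity $W = 2\imath k/t_V(k)$, the change of variables $\lambda = k^2$, and the local-uniform bounds on $e_{V\pm}$, $f_{V\pm}$ to justify the $\delta$-function limit---is exactly the standard proof given there, with your free-case sanity check $\operatorname{Im} G_0 = \cos(k|x-y|)/(2k)$ correctly pinning down the normalization $\tfrac{1}{2\pi}$. One small point: your (correct) formula for the discrete projection, $P_d h = \sum_j (\psi_j, h)\,\psi_j$, silently fixes a typo in the paper's statement, where the factor $\lambda_j$ inside $P_d$ should not appear (the projection is the $g\equiv 1$ case of the functional-calculus formula, not the $g(\zeta)=\zeta$ case).
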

\no

\section{Radiation damping and Fermi's Golden Rule}
\label{sec:FGR-ionize}
In this section, we explain how $\Gamma[V]$, given in Eq.  \eqref{toy-fgr}, emerges as the key quantity controlling the lifetime of the metastable state. 
We now state a theorem on the ionization and decay of the bound state and then sketch the idea of a proof, which explains the mechanism of decay and  \eqref{toy-decay}. A detailed proof can be found in  \cite{Soffer-Weinstein:98,Kirr-Weinstein:01,Kirr-Weinstein:03}. 
The following result holds for generic potentials with one bound state. In particular, these hypotheses are satisfied by $V\in \cA_1^\reg(a,b,\mu)$.
\begin{thm}\label{thm:SW}
Consider the parametrically forced Schr\"odinger equation
\begin{equation}
     \imath \D_t\phi^\epsilon\ =\ H_V\phi^\epsilon\ +\ \epsilon\ \cos(\mu t)\ \beta(x)\ \phi^\epsilon.
     \label{f-schrod}
     \end{equation}
Assume $V$ and $\beta$ satisfies the general conditions of \cite{Soffer-Weinstein:98,Kirr-Weinstein:01}.  Consider the initial value problem for Eq. \eqref{forced-schrod} with $\phi^\epsilon(x,0) = \phi_0\in L^{2,\sigma}(\mathbb R),$ where $\sigma\ge 1$. 
Assume 
\begin{enumerate}
\item $k_V^2 \equiv \lambda_V+\mu>0$ (resonance with the continuum at $\mathcal{O}(\epsilon^2)$)
\item $\Gamma[V]>0$, where $\Gamma[V]$ is the non-negative quantity defined by  
\begin{subequations}
\label{eqn:Gamma}
\begin{align}
\Gamma[V] &\equiv \frac{\pi}{4}\ \langle \beta \psi_V, \delta(H_V - k_{V}^2) P_c \beta \psi \rangle \\
& = \frac{1}{16\ k_V} \sum_\pm |\langle \beta \psi_V, e_{V\pm}(\cdot,k_{V}) \rangle  |^2\\
& = \frac{1}{16\ k_V}\ |t_V(k_V)|^2\ \sum_\pm |\langle \beta \psi_V, f_{V\pm}(\cdot,k_{V}) \rangle  |^2,
%&= \frac{1}{8} \sum_\pm |\langle \beta \psi_V, e_{V\pm}(\cdot,k_{V}) \rangle  |^2 \rho(k_V^2)
\end{align}
where $e_{V\pm}$ and $f_{V\pm}$ denote, respectively, the distorted plane wave and Jost solutions, and 
$t_V(k)$ denotes the transmission coefficient.
\end{subequations}
\end{enumerate}
Then, there exists $\epsilon_0>0$ such that for $\epsilon<\epsilon_0$
\begin{align*}
\left| \langle \psi_V, \phi^\epsilon(\cdot,t)  \rangle \right| &\sim \left| \langle \psi_V, \phi_0  \rangle \right|e^{-\epsilon^2 \Gamma[V] t}\ + \ \mathcal{O}(\epsilon), \quad && 0\le t\le\cO(\epsilon^{-2})\\
%\left\|  \langle x\rangle^{-\sigma}  \phi^\epsilon(\cdot,t)  \right\|_2  & \lesssim \ t^{-\frac{1}{2}}  \| \langle x\rangle^\sigma \phi^\epsilon_0  \|_2,   && t\gg1.
\left\|  \phi^\epsilon(\cdot,t)  \right\|_{L^{2,-\sigma}}  & \lesssim \ t^{-\frac{1}{2}}  \| \phi^\epsilon_0  \|_{L^{2,\sigma}},   && t\gg1.
\end{align*}
\end{thm}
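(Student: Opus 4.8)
The plan is to follow the isospectral decomposition and normal-form reduction underlying \cite{Soffer-Weinstein:98,Kirr-Weinstein:01,Kirr-Weinstein:03}, specialized to the parametric forcing $\epsilon\cos(\mu t)\beta$. First I would split the solution along the spectral subspaces of $H_V$. Since $H_V$ has the single bound state $\psi_V$, write
\begin{equation}
\phi^\epsilon(\cdot,t) = a(t)\,\psi_V + \eta(t), \qquad a(t) = \langle\psi_V,\phi^\epsilon(t)\rangle, \qquad \eta(t) = P_c\phi^\epsilon(t).
\end{equation}
Projecting \eqref{f-schrod} onto $\psi_V$ and onto the range of $P_c$ (using $P_c H_V = H_V P_c$ and $\langle\psi_V,\eta\rangle=0$) yields the exact coupled system
\begin{align}
i\dot a &= \lambda_V a + \epsilon\cos(\mu t)\big[\,\langle\psi_V,\beta\psi_V\rangle\,a + \langle\psi_V,\beta\eta\rangle\,\big],\\
i\partial_t\eta &= H_V\eta + \epsilon\cos(\mu t)\,P_c\beta\big(a\,\psi_V+\eta\big).
\end{align}
Removing the fast bound-state phase via $a(t)=A(t)e^{-i\lambda_V t}$ gives $i\dot A = \epsilon\cos(\mu t)e^{i\lambda_V t}\langle\psi_V,\beta\phi^\epsilon\rangle$, so that $A$ is the slowly-varying amplitude whose modulus equals $|\langle\psi_V,\phi^\epsilon(t)\rangle|$.

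Next I would solve the $\eta$-equation by Duhamel,
\begin{equation}
\eta(t) = e^{-iH_V t}\eta(0) - i\epsilon\int_0^t e^{-iH_V(t-s)}\cos(\mu s)\,P_c\beta\big(a(s)\psi_V+\eta(s)\big)\,\ud s,
\end{equation}
and substitute the leading piece $a(s)\psi_V$ (the term $P_c\beta\eta$ contributes at higher order in $\epsilon$) back into the $A$-equation. Writing $\tau=t-s$ and using $e^{i\lambda_V t}a(s)=A(s)e^{i\lambda_V\tau}$, the decisive memory term is
\begin{equation}
i\dot A \approx -i\epsilon^2\int_0^t \cos(\mu t)\cos(\mu s)\,A(s)\,e^{i\lambda_V\tau}\,K(\tau)\,\ud s, \qquad K(\tau)=\langle\beta\psi_V,e^{-iH_V\tau}P_c\beta\psi_V\rangle,
\end{equation}
where I used that $\beta$ is real and the symmetry of $H_V$. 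Expanding the product of cosines, only the $t$-independent part $\tfrac12\cos(\mu\tau)$ is secular; the $e^{\pm 2i\mu t}$ contributions average to lower order. Inserting the spectral representation of $e^{-iH_V\tau}P_c$ from Proposition \ref{prop:SpecDecomp}, using $\cos(\mu\tau)e^{i\lambda_V\tau}=\tfrac12\big(e^{ik_V^2\tau}+e^{i(\lambda_V-\mu)\tau}\big)$, and letting $t\to\infty$ via the Plemelj identity $\int_0^\infty e^{i(\omega+i0)\tau}\,\ud\tau=\pi\delta(\omega)+i\,\text{P.V.}\,\omega^{-1}$, only the resonant frequency $k_V^2=\lambda_V+\mu$ produces a $\delta$-contribution (the branch $\lambda_V-\mu<0$ never meets $k^2\ge0$, giving only a real frequency shift). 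Its real part reproduces exactly the quantity $\Gamma[V]=\tfrac{\pi}{4}\langle\beta\psi_V,\delta(H_V-k_V^2)P_c\beta\psi_V\rangle$ of \eqref{eqn:Gamma}, and after replacing $A(s)\to A(t)$ one obtains the normal form $\dot A = -\epsilon^2(\Gamma[V]+i\Lambda)A + \mathcal{O}(\epsilon^3)$ with $\Lambda\in\mathbb R$. Integrating gives $|A(t)|=|A(0)|e^{-\epsilon^2\Gamma[V]t}\,(1+o(1))$ on $0\le t\le\mathcal{O}(\epsilon^{-2})$, which is the first assertion (the $\mathcal{O}(\epsilon)$ in the theorem absorbs the error and the initial transient in $\eta$).

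Finally, the dispersive bound follows from local decay for the continuous part. Using the one-dimensional estimate $\|e^{-iH_V t}P_c f\|_{L^{2,-\sigma}}\lesssim \langle t\rangle^{-1/2}\|f\|_{L^{2,\sigma}}$ (valid for $\sigma\ge1$, assuming no zero-energy resonance or threshold eigenvalue), together with the Duhamel formula for $\eta$ and the already-controlled boundedness and decay of $a(s)$, I would estimate $\|\eta(t)\|_{L^{2,-\sigma}}$ and hence $\|\phi^\epsilon(t)\|_{L^{2,-\sigma}}\lesssim t^{-1/2}\|\phi^\epsilon_0\|_{L^{2,\sigma}}$ for $t\gg1$. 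The main obstacle is not the formal computation but its rigorous control uniformly on the long $\mathcal{O}(\epsilon^{-2})$ time scale: one must justify replacing the non-Markovian memory integral by the Markovian Fermi-Golden-Rule term, show that the discarded fast-oscillating and $P_c\beta\eta$ terms remain $o(1)$ after integration, and close everything by a continuity/bootstrap argument that simultaneously propagates the exponential decay of $|A|$ and the local decay of $\eta$. This is precisely the technical content of \cite{Soffer-Weinstein:98,Kirr-Weinstein:01,Kirr-Weinstein:03}, for which the local-decay estimate for $H_V P_c$ is the essential analytic input.
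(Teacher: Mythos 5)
Your proposal is correct and follows essentially the same route as the paper's own (sketch) proof: spectral decomposition $\phi^\epsilon = a\,\psi_V + P_c\phi^\epsilon$, extraction of the fast phase, Duhamel substitution to obtain a closed memory equation for $A(t)$, and identification of $\Gamma[V]$ via the distributional limit $(H_V-k_V^2-\imath 0)^{-1} \to \mathrm{P.V.} + \imath\pi\delta$; the only cosmetic difference is that the paper first passes through a ``minimal model'' with $H_V$ replaced by $-\Delta$ before returning to the full operator, whereas you work with $H_V$ throughout. Like the paper, you correctly defer the rigorous justification (uniform control on the $\mathcal{O}(\epsilon^{-2})$ time scale and the local decay estimate underlying the $t^{-1/2}$ bound) to \cite{Soffer-Weinstein:98,Kirr-Weinstein:01,Kirr-Weinstein:03}.
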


\begin{remark}
For  certain choices of potentials, the parametrically forced Schr\"odinger (ionization) problem is exactly solvable by Laplace transform methods and the  time-behavior can be computed for  {\it all}  $\epsilon>0$. See, for example, \cite{Costin-Lebowitz-Rokhlenko:00,Costin:2001fk}. 
\end{remark}

 \medskip \nit{\bf A sketch of the proof.} In this sketch, we drop the subscript on $\psi_V$ and  superscript on $\phi^\epsilon$. For small $\epsilon$, it is natural to decompose the solution as
\begin{equation}
\phi(t,x) = a(t) \psi(x) + \phi_c(t,x)
\label{decomp}\end{equation}
where $a(t) = \langle \psi, \phi(\cdot,t) \rangle$ and $\phi_c = P_c [\phi]$ is the continuum projection; see
 \eqref{eqn:specHV}. To simplify the discussion we take as initial data:
\begin{equation}
a(0)=a_0,\ \ \ \ \phi_c(0,x)\equiv0.
\label{data}
\end{equation}
Substitution of \eqref{decomp} into \eqref{f-schrod} and projecting onto the discrete and continuous spectral parts of $H_V$ yields the following  coupled system:
\begin{subequations}
\label{eqn:weakCoupledSys}
\begin{align}
(\imath \partial_t - \lambda) a(t) &= \epsilon \cos(\mu t) \langle \psi, \beta \psi \rangle a(t)  + \epsilon \cos(\mu t) \langle \psi, \beta \phi_c \rangle \label{first} \\
(\imath \partial_t - H_V) \phi_c &= \epsilon \cos(\mu t) P_c [\beta \psi] a(t) + \epsilon \cos(\mu t) P_c[\beta \phi_c] .
\label{second}\end{align}
\end{subequations}
Since  $\epsilon$ has been assumed small, the coupling between $a(t)$ and $\phi_c(t,x)$ is weak. We now proceed to make a set of simplifications leading to a minimal model,
 in which the mechanism of radiation damping is fairly transparent. First, since the first term on the right hand side
 of \eqref{first} contributes an order $\epsilon$ mean-zero frequency shift from $\lambda$, we neglect it. Second, from equation \eqref{second} we formally have that $\phi_c=\mathcal{O}(\epsilon)$. Therefore, the last term on the right hand side of \eqref{second} is $\mathcal{O}(\epsilon^2)$ and we therefore neglect it. Finally, the second equation evolves in the continuous spectral part of $H_V$ and we formally replace $H_V$ by $H_0=-\Delta$.

The resulting system is the  following Hamiltonian system of an oscillator of complex amplitude $a(t)$ coupled to a field $\phi_c(t,x)$:
\begin{subequations}
\label{eqn:coupledOscillatorWave}
\begin{align}
(\imath \partial_t - \lambda) a(t) &= \epsilon \cos(\mu t) \langle \psi, \beta \phi_c \rangle \\
(\imath \partial_t + \Delta) \phi_c &= \epsilon \cos(\mu t) \beta \psi a(t) .
\end{align}
\end{subequations}
We can exploit a separation of time-scales by extracting the fast phase from $a(t)$ via the  substitution
$$
a(t) = e^{- \imath \lambda t} A(t),
$$
giving the following equation for the slowly varying amplitude, $A(t)$:
\begin{equation}
\label{eqn:A(t)}
\imath \partial_t  A(t) = \epsilon \cos(\mu t) \langle \psi, \beta \phi_c \rangle  e^{\imath \lambda t}
\end{equation}
Now, Duhamel's formula is used to rewrite  Eq. (\ref{eqn:coupledOscillatorWave}b) as 
$$
\phi_c(t) =  - \imath \epsilon \int_0^t e^{\imath \Delta (t-s)} \cos(\mu s) \beta \psi a(s) \ud s
$$
since $\phi_c(0) = 0$. We insert this back into Eq. \eqref{eqn:A(t)} to obtain the closed equation for $A(t)$. 
$$
 \partial_t  A(t) = - \epsilon^2 \cos(\mu t) e^{-\imath \lambda t} A(t) \int_0^t \langle \beta \psi ,   e^{\imath \Delta (t-s)} \beta \psi \rangle   \cos(\mu s)  e^{-\imath \lambda s}A(s) \ud s  
$$
Writing $\cos(\mu t) = \frac{1}{2} \left( e^{\imath \mu t} + e^{-\imath \mu t}\right)$, we find that if $k_{res}^2 \equiv \lambda + \mu > 0$, then it is a resonant frequency and 
\begin{align*}
 \partial_t  A(t) &\approx - \frac{1}{4}\epsilon^2 e^{-\imath k_{res}^2 t} A(t) \int_0^t \langle \beta \psi ,   e^{\imath \Delta (t-s)} \beta \psi \rangle  e^{-\imath k_{res}^2 s}A(s) \ud s    \\
 &\approx\ - \frac{1}{4}\epsilon^2 \langle \beta \psi , (-\Delta - k_{res}^2-\imath 0)^{-1} \beta \psi \rangle A(t)  
 \end{align*}
Here, $(-\Delta - E-\imath 0)^{-1}=\lim_{\delta\downarrow0}(-\Delta - E^2-\imath \delta)^{-1}$. The choice of regularization is dictated by the outgoing radiation condition for $t\to+\infty$; see \cite{Soffer-Weinstein:98,Kirr-Weinstein:01}. 
\medskip

Returning to the original (un-approximated) equations  \eqref{eqn:weakCoupledSys}, we have analogously 
\begin{equation}
 \partial_t  A(t) \approx - \frac{1}{4}\epsilon^2 \langle \beta \psi , (H_V - k_{res}^2-\imath 0)^{-1} P_c [\beta \psi] \rangle A(t)\ \equiv -\epsilon^2(\Lambda +i\Gamma) A(t).
 \label{toy-derived}
\end{equation}
The coefficient of $A(t)$ in \eqref{toy-derived} can be computed by applying the functional calculus identity \eqref{spec-borel} 
 to the function $g(s)= (s-k_{res}^2-\imath \tau)^{-1}$, together with the distributional identity
 $$\lim_{\tau\downarrow0}(s-k_{res}^2-\imath \tau)^{-1}= P.V. \ (s-k_{res}^2)^{-1}+\imath \pi\ \delta(s-k_{res}^2)$$
 and the identification $s\to H_V$. In particular, 
 \begin{align}
 \Gamma[V] &= \frac{1}{4}\cdot \frac{1}{2\pi} \langle\ \beta\psi_V,\delta(H_V-k_{res}^2)\ P_c\beta\psi\ \rangle\nn\\
  & = \frac{1}{8\pi}\ \int_0^\infty\ \delta(k^2-k_{res}^2)\ \left[\ \left|\langle f_{V+}(\cdot,k),\beta\psi_V\rangle\right|^2
  +\ \left|\langle f_{V-}(\cdot,k),\beta\psi_V\rangle\right|^2\ \right]\ \left|t_V(k)\right|^2\ dk,
  \nn\end{align}
  from which the expression \eqref{eqn:Gamma} follows after setting $\nu=k^2$ and carrying out the integral.

 %%%
\medskip

\section{A constrained optimization problem: design of a potential to  minimize radiative loss}
\label{sec:PDP}
We now consider the Potential Design Problem (PDP) given in Eq. (\ref{toy-opt}) with $\Gamma[V]$ defined in Eq. (\ref{eqn:Gamma}). 
We begin by discussing the set of admissible potentials $\mathcal{A}_1(a,b,\mu)$ defined in Def. \ref{def:VAdmiss}.
For the purpose of numerical computation we relax he admissible set, $\mathcal{A}_1\to\cA_1^{\reg}$, by replacing the discrete constraint ($H_V$ has exactly one eigenvalue) by an inequality constraint, in terms of a regularization parameter, $\reg$.
We then show that the objective function is locally Lipschitz and that a solution to the PDP exists in the modified admissible set, $\cA_1^{\reg}(a,b,\mu)$. 

\subsection{The admissible set $\mathcal{A}_1$ and its relaxation, $\mathcal{A}_1^\reg$ }
\label{sec:AdmPot}
Denote the Wronskian of the distorted plane waves, $e_{V\pm}$, by
\begin{align}
\label{eqn:Wron}
W_V(k) \equiv \text{Wron}(e_{V+}(\cdot,k), e_{V-}(\cdot,k)), \quad \quad k \in \mathbb C. 
\end{align}
Zeros of $W_V(k)$ correspond to poles of the Green's function $G_V(x,y,k)$ as introduced in Sec. \ref{sec:ResolventOp}. In particular, the zeros of $W_V(k)$, in the upper half plane, are eigenvalues. 
The number of eigenvalues is increased or decreased  by one, typically through the crossing of a  simple 
zero of $W_V(k)$ through $k=0$ as $V$ varies.\footnote{ Potentials, $V$, for which $W_V(k=0) = 0$ are called \emph{exceptional}. The value 
 $k=0$, corresponding to edge of the continuous spectrum is then called a zero energy resonance
or a half-eigenvalue with half-bound state $e_{V\pm}(x,0)$\  \cite{RS3}.} 
 Our strategy to fix the number of eigenvalues is then
 to start with a one bound state potential and deform $V$, keeping $W_V(0)\ne0$.
However, numerically it is advantageous to replace contraint $W_V(0)\ne0$ by the inequality constraint $W_V(0)^2\ge\reg$.
 %%%
For $\reg>0$, we regularize  $\cA_1(a,b,\mu)$ by introducing
\begin{equation}
\label{eqn:Aeps}
\cA^\reg_1(a,b,\mu) \equiv \cA_1(a,b,\mu) \cap \{V\colon W_V(0)^2 \geq \reg \}. 
\end{equation}
%%%%%

\begin{rem}
\label{lem:nonConvex}
Note that the set of admissible potentials $\cA_1^\reg(a,b,\mu)$ is not convex. Indeed, 
 counter-examples can be explicitly generated and are illustrated by the following cartoon superposition of
potential wells at sufficiently separated points \cite{Harrell}:
\end{rem}

$$
0.5 \, \,
\begin{sparkline}{10}
\spark 0 1 1 1 /
\spark .5 0 .5 1.2 /
\spark 0.00 0.99 0.03 0.98 0.07 0.97 0.10 0.95 0.14 0.92 0.17 0.86 0.21 0.77 0.24 0.64 0.28 0.46 0.31 0.26 0.34 0.10 0.38 0.05 0.41 0.14 0.45 0.31 0.48 0.51 0.52 0.68 0.55 0.80 0.59 0.88 0.62 0.93 0.66 0.96 0.69 0.98 0.72 0.99 0.76 0.99 0.79 1.00 0.83 1.00 0.86 1.00 0.90 1.00 0.93 1.00 0.97 1.00 1.00 1.00 /
\end{sparkline}
\quad + \quad 0.5 \,\,
\begin{sparkline}{10}
\spark 0 1 1 1 /
\spark .5 0 .5 1.2 /
\spark 0.00 1.00 0.03 1.00 0.07 1.00 0.10 1.00 0.14 1.00 0.17 1.00 0.21 1.00 0.24 0.99 0.28 0.99 0.31 0.98 0.34 0.96 0.38 0.93 0.41 0.88 0.45 0.80 0.48 0.68 0.52 0.51 0.55 0.31 0.59 0.14 0.62 0.05 0.66 0.10 0.69 0.26 0.72 0.46 0.76 0.64 0.79 0.77 0.83 0.86 0.86 0.92 0.90 0.95 0.93 0.97 0.97 0.98 1.00 0.99 /
\end{sparkline}
\quad = \quad  
\begin{sparkline}{10}
\spark 0 1 1 1 /
\spark .5 0 .5 1.2 /
\spark 0.00 1.00 0.03 0.99 0.07 0.99 0.10 0.98 0.14 0.96 0.17 0.93 0.21 0.88 0.24 0.82 0.28 0.72 0.31 0.62 0.34 0.53 0.38 0.49 0.41 0.51 0.45 0.56 0.48 0.60 0.52 0.60 0.55 0.56 0.59 0.51 0.62 0.49 0.66 0.53 0.69 0.62 0.72 0.72 0.76 0.82 0.79 0.88 0.83 0.93 0.86 0.96 0.90 0.98 0.93 0.99 0.97 0.99 1.00 1.00 /
\end{sparkline}
$$
The two potentials on the left hand side of the equation support a single bound state while the convex combination on the right supports two. 
\bigskip

\begin{lem}
\label{lem:Aeps}
For $\reg>0$, let $\gamma \colon [0,1] \rightarrow \cA_1^{\reg}(a,b,\mu)$ be a smooth function valued path. If $H_{\gamma(0)}$ supports a single bound state then so does $H_{\gamma(t)}$ for $t\in [0,1]$.
\end{lem}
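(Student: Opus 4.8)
The statement is a connectedness/continuity argument: along a continuous path staying in $\cA_1^\reg$, the number of bound states of $H_{\gamma(t)}$ cannot change. The key object is the Wronskian $W_V(k)$, whose zeros in the upper half-plane are precisely the eigenvalues of $H_V$, and the point is that the regularizing constraint $W_V(0)^2 \ge \reg > 0$ prevents an eigenvalue from being created or destroyed by crossing the edge $k=0$ of the continuous spectrum.

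First I would set up the counting function. Let $N(t)$ denote the number of zeros of $W_{\gamma(t)}(k)$ in the open upper half-plane $\{\Im k > 0\}$, counted with multiplicity; by the discussion preceding the lemma, this equals the number of negative eigenvalues of $H_{\gamma(t)}$. I would like to argue $N(t)$ is constant, and since $N$ is integer-valued it suffices to show it is continuous in $t$, i.e. locally constant. The natural tool is the argument principle: for fixed $t$, enclose the upper-half-plane zeros by a contour $C$ (say a large semicircle of radius $\Roman$ in the upper half-plane closed along the real axis, indented at $k=0$), and write
\begin{equation*}
N(t) = \frac{1}{2\pi\imath}\oint_C \frac{\partial_k W_{\gamma(t)}(k)}{W_{\gamma(t)}(k)}\ \ud k.
\end{equation*}
This integral depends continuously on $t$ provided $W_{\gamma(t)}(k)$ has no zeros on the contour $C$ itself and $W_{\gamma(t)}$ together with $\partial_k W_{\gamma(t)}$ depends continuously on $t$ uniformly on $C$.

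The continuity of $V\mapsto W_V(k)$ in the appropriate sense follows from the continuity results already established: by the formula \eqref{eqn:contStates} the distorted plane waves $e_{V\pm}(x,k)$ depend continuously on $V$ through the resolvent $R_V$, and Proposition \ref{prop:RnConv} gives Lipschitz continuity of $R_V$ in $V$ (uniformly in $k$), while Proposition \ref{prop:LipK} handles the $k$-dependence; since $\gamma$ is a smooth path into $\cA_1^\reg$, the composite $t\mapsto W_{\gamma(t)}(k)$ is continuous. The crucial input is to ensure no zero escapes through the boundary of the region I am counting. There are three places a zero could cross $\partial C$: through $k=0$, off to infinity, or across the real axis for $k\neq 0$. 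The real axis for $k\neq 0$ is excluded because $R_V$ (hence $W_V$) has no pole there by Proposition \ref{RVmero}(1); escape to infinity is controlled because for large $|k|$ the potential is a relative perturbation of $-\partial_x^2$ and $W_V(k)$ approaches its free value $W_0(k)$, which is nonvanishing, so zeros stay in a bounded region uniformly along the (compact-image) path.

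The main obstacle — and the reason the $\reg$-regularization is present — is the edge $k=0$. This is exactly the \emph{exceptional} case flagged in the footnote: a zero of $W_{\gamma(t)}$ can migrate between the upper and lower half-planes only by passing through $k=0$, which happens precisely when $W_{\gamma(t)}(0)=0$. The constraint defining $\cA_1^\reg$, namely $W_{\gamma(t)}(0)^2 \ge \reg > 0$ for all $t\in[0,1]$, rules this out: the indentation of $C$ at the origin can be taken at a fixed small radius, and on it $|W_{\gamma(t)}(0)| \ge \sqrt{\reg}$ keeps the integrand bounded and zero-free near $k=0$ uniformly in $t$. Thus $N(t)$ is a continuous integer-valued function on the connected set $[0,1]$, hence constant, so $N(t)=N(0)=1$ for all $t$, which is the assertion. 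I would close by remarking that one should verify the large-$|k|$ estimate and the uniform bound $|W_{\gamma(t)}(0)|\ge\sqrt\reg$ make the contour deformation legitimate simultaneously for all $t$; this uniformity, rather than any single pointwise statement, is the technical heart of the argument.
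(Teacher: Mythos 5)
Your proof is correct in outline, but it takes a genuinely different route from the paper. The paper's proof is local and branch-tracking: it applies the implicit function theorem to $f(k,t)=W_{\gamma(t)}(k)$ at the simple zero $k=\imath\sqrt{|\lambda_0|}$, continues the eigenvalue branch $t\mapsto\lambda_t<0$ as far as possible, and shows the branch can terminate before $t=1$ only if $\lambda_{t^\#}=0$, i.e.\ $W_{\gamma(t^\#)}(0)=0$, contradicting the constraint $W_V(0)^2\ge\reg$; a symmetric argument rules out the birth of new eigenvalues. Your argument-principle approach counts all upper-half-plane zeros of $W_{\gamma(t)}$ at once, so it handles loss and gain of bound states in a single stroke and does not require tracking any particular branch (nor, strictly, simplicity of the eigenvalue, which the paper's IFT step does need). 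The price is that you need more global inputs: analyticity of $k\mapsto W_V(k)$ inside the contour (true for compactly supported $V$, where the Jost solutions are entire in $k$), absence of zeros on $\mathbb{R}\setminus\{0\}$ (Proposition \ref{RVmero}), and confinement of the zeros to a fixed bounded region uniformly along the path. For the last point your appeal to ``$W_V$ approaches its free value'' can be replaced by something sharper and easier to justify: for self-adjoint $H_{\gamma(t)}$ the upper-half-plane zeros lie on the positive imaginary axis and correspond to eigenvalues $\lambda=-\kappa^2\ge -\|\gamma(t)\|_\infty$, and $\sup_t\|\gamma(t)\|_\infty$ is finite by compactness of $[0,1]$ and the embedding $H^1\hookrightarrow L^\infty$ together with the constraint $\|V\|_{H^1}\le b$; this pins the zeros inside a semicircle of fixed radius. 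Both proofs turn on exactly the same key fact — the relaxed constraint $W_V(0)^2\ge\reg$ forbids a zero of the Wronskian from passing through the spectral edge $k=0$ — so you have correctly identified the heart of the lemma; your version is more machinery-heavy but also more robust (it would apply verbatim to an admissible class with $N$ bound states), while the paper's is the more elementary argument tailored to the one-bound-state setting.
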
 
\begin{proof}
We show that no bound states are lost along the path $\gamma(t)$. A similar argument shows that no bound states are gained.

Define $f(k,t) \equiv W_{\gamma(t)}(k)$ and consider the equation $f(k,t) = 0$. Let $\lambda_0$ denote the eigenvalue of $H_{\gamma(0)}$, {\it i.e.} 
$f(\imath \sqrt{|\lambda_0|}, 0) = 0$. Since $\partial_k f(k,t) \Big|_{(\imath \sqrt{|\lambda_0|},0)} \neq 0$, {\it i.e.} $\lambda_0$ is a simple eigenvalue, by the implicit function theorem, there exists $T>0$ such that for $|t|<T$, there is a parameterized family $t\mapsto \lambda_t$ with $\lambda_t<0$ and 
$f(\imath\sqrt{|\lambda_t|},t) = 0$.  
Let 
$$
t^\# = \sup \{0\le t\le 1\colon f(\imath\sqrt{|\lambda_t|},t) = 0 \text{ and } \lambda_t<0 \}\ >\ 0.
$$
If $\lambda_{t^\#} = 0,\ t^\#<1$ then $f(0,t^\#)=W_{\gamma(t^\#)}(0) = 0$. This contradicts $\gamma(t) \subset  \cA_1^{\reg}(a,b,\mu)$ Therefore, $t^\#=1$. 
\end{proof}

Let $\eta_V^\pm(x) = e_{V\pm}(x, 0)$ be the \emph{distorted plane waves} at $k=0$ which satisfy
\begin{align}
\label{eqn:halfBoundState}
H_V \eta_V^\pm = 0, \quad  \quad \lim_{x\rightarrow \pm \infty} \eta_V^\pm = 1;
\end{align}
see equation \eqref{eqn:defTransRefl}.
%For an exceptional potential, solutions to \eqref{eqn:halfBoundState} are referred to as \emph{half-bound states}. 

Our gradient-based optimization approach requires that we compute the variation of the Wronskian, $W_V(0)$, with respect to the potential $V$. This calculation will also be used to establish Lipschitz continuity of $W_V(0)$. 
\begin{prop} 
\label{prop:wronDeriv}
Let $\eta_V^\pm(x)$ satisfy Eq. (\ref{eqn:halfBoundState}). 
The Fr\'echet derivative  of the Wronskian $W_V(0) = \text{Wron}(\eta_V^+, \eta_V^-) \colon L^2_{\text{comp}} \rightarrow \mathbb R$ with respect to the potential is given by 
$$
\frac{\delta W_V(0)}{\delta V}= - \eta_V^+ \eta_V^-  .
$$
\end{prop}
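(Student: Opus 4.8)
The plan is to treat $W_V(0)=\mathrm{Wron}(\eta_V^+,\eta_V^-)$ as a functional on potentials, perturb $V\mapsto V+\epsilon h$ with $h\in L^2_{\mathrm{comp}}(\mathbb R)$ and $\mathrm{supp}(h)\subset[-a,a]$, differentiate the defining problem for the zero-energy states, and then convert the resulting variation into an integral against $h$ by a Lagrange (Wronskian) identity. First I would record the structure of $\eta_V^\pm$: each solves $-\partial_x^2\eta+V\eta=0$, and since $V$ is supported in $[-a,a]$ one has $\eta''=0$ outside that interval, so the finite-limit normalization in \eqref{eqn:halfBoundState} forces $\eta_V^+\equiv1$ on $[a,\infty)$ and $\eta_V^-\equiv1$ on $(-\infty,-a]$. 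It is convenient to characterize $\eta_V^+$ as the solution of the initial value problem $\eta''=V\eta$, $\eta(a)=1$, $\eta'(a)=0$, and $\eta_V^-$ by the analogous data clamped at $x=-a$.

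Next I would establish differentiability of $V\mapsto\eta_V^\pm$ and identify the variational equation. Writing $\eta^\pm_\epsilon$ for the states associated with $V+\epsilon h$ and $\dot\eta^\pm=\partial_\epsilon\eta^\pm_\epsilon|_{\epsilon=0}$, smooth dependence of solutions of the linear ODE on the coefficient $V$ (via variation of parameters and a Gronwall estimate, or building on the Lippmann--Schwinger representation \eqref{eqn:contStates} together with the continuity bounds of Propositions \ref{prop:RnConv} and \ref{prop:eVbounded}) shows that $\dot\eta^\pm$ exists and solves the inhomogeneous equation $H_V\dot\eta^\pm=-h\,\eta_V^\pm$. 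Because the clamping data at the respective endpoints are independent of $\epsilon$, one obtains the crucial boundary information $\dot\eta^+\equiv0$ on $[a,\infty)$ and $\dot\eta^-\equiv0$ on $(-\infty,-a]$, together with the vanishing of their first derivatives there.

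The heart of the computation is a Lagrange identity. Since $\eta_V^+$ and $\eta_V^-$ both solve a second-order equation with no first-order term, $W_V(0)$ is independent of $x$, and hence so is $\partial_\epsilon W_V(0)$; I may therefore evaluate it at any $x\ge a$, where $\eta_V^+\equiv1$, $(\eta_V^+)'\equiv0$, and $\dot\eta^+\equiv0$, so that the variation of the Wronskian collapses to a single factor, $\partial_\epsilon W_V(0)=\pm\,(\dot\eta^-)'(x)$, the orientation being that fixed in \eqref{eqn:Wron}. To evaluate $(\dot\eta^-)'$ I use $\frac{d}{dx}\mathrm{Wron}(\eta_V^+,\dot\eta^-)=\pm\,h\,\eta_V^+\eta_V^-$, in which the terms carrying $V$ cancel by virtue of $H_V\eta_V^+=0$ and $H_V\dot\eta^-=-h\,\eta_V^-$. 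Integrating this over $\mathbb R$ and discarding the lower boundary term using $\dot\eta^-\equiv0$ on $(-\infty,-a]$, I obtain $\partial_\epsilon W_V(0)=-\int_{\mathbb R}h(x)\,\eta_V^+(x)\,\eta_V^-(x)\,dx$. Since this holds for every admissible direction $h$, the Riesz representative of the Fréchet derivative is $\delta W_V(0)/\delta V=-\eta_V^+\eta_V^-$.

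The main obstacle is the rigorous differentiability and boundary-decay claim of the second step: one must justify that $V\mapsto\eta_V^\pm$ is Fréchet differentiable in the relevant topology and that $\dot\eta^\pm$, together with its derivative, genuinely vanishes at the clamped endpoint, since this is precisely what annihilates the boundary terms in the Lagrange identity. Everything downstream is bilinear Wronskian bookkeeping, which is routine once this regularity is in hand; the global sign is pure convention and is pinned to the orientation of $\mathrm{Wron}$ adopted in \eqref{eqn:Wron}.
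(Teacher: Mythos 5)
Your proof is correct, and it takes a genuinely different route from the paper's. The paper's appendix computes the variations $\delta\eta_\pm$ explicitly by variation of parameters, using the zero-energy Green's function $q(x,y)$ built from $\eta_V^+$, $\eta_V^-$ and the factor $1/W$, and then expands all four terms of $\delta W$ and verifies the cancellations by splitting the $y$-integral at $y=x$. You instead (i) characterize $\eta_V^\pm$ by clamped data at $x=\pm a$, so that $\eta^+_{V+\epsilon h}\equiv 1$ on $[a,\infty)$ for every $\epsilon$ and hence $\dot\eta^+$ and its derivative vanish identically there; (ii) use constancy of the Wronskian in $x$ to evaluate $\delta W$ at a point $x\ge a$, where three of the four terms die and $\delta W$ collapses to $(\dot\eta^-)'(x)$ up to orientation; and (iii) compute that single term by one Lagrange-identity integration. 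Your route buys two things. First, it needs no Green's function and never divides by $W$: the paper's kernel $q$ is undefined at exceptional potentials ($W_V(0)=0$), while your argument is insensitive to that degeneracy. Second, the boundary behavior of the variation is derived from the normalization rather than posited: linearizing $\eta^+\to 1$ gives $\dot\eta^+\equiv 0$, value and slope, beyond $x=a$, whereas the paper imposes $\partial_x\delta\eta_\pm\to 0$ at both infinities --- a condition the true variation does not satisfy, since, as your own identity shows, $\dot\eta^+$ has the nonzero limiting slope $-\int h\,\eta_V^+\eta_V^-$ at $-\infty$.

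One caution about the sign, which you flag but should nail down. With the orientation $\text{Wron}(f,g)=fg'-f'g$ --- the one implicit in the paper's own expansion $\dot W = \dot\eta_+\eta_-'+\eta_+\dot\eta_-'-\dot\eta_+'\eta_--\eta_+'\dot\eta_-$ --- your computation yields $\delta W=+\int h\,\eta_V^+\eta_V^-\ud x$, i.e. $\delta W_V(0)/\delta V=+\eta_V^+\eta_V^-$: at $x\ge a$ one has $\delta W=(\dot\eta^-)'(x)$ and $\frac{d}{dx}\text{Wron}(\eta_V^+,\dot\eta^-)=+h\,\eta_V^+\eta_V^-$, and no sign enters when the lower boundary term is dropped. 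A sanity check with $V=0$ confirms this: there $\eta_V^\pm\equiv1$, $\dot\eta^-(x)=\int_{-a}^x(x-y)h(y)\ud y$, so $\delta W=(\dot\eta^-)'(a)=\int h$. The stated minus sign corresponds to the opposite orientation $\text{Wron}(f,g)=f'g-fg'$; since Eq. \eqref{eqn:Wron} fixes no orientation, your resolution --- pinning the sign to the convention --- is the honest one, but you should declare the convention explicitly, because your displayed identities, with the definite minus at the end, only cohere under $f'g-fg'$. Note also that the paper's minus sign is not a convention effect: under the paper's own orientation it is produced by the Green's-function boundary conditions just discussed, which select a solution of $H_V\delta\eta_\pm=-\delta V\,\eta_\pm$ differing from the variation of the normalized family \eqref{eqn:halfBoundState} by multiples of $\eta_V^\pm$.
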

\begin{proof} See Appendix \ref{sec:appCompGrads}. \end{proof}

\begin{rem}
\label{rem:dWdVsym}
If $V$ is symmetric, then $\frac{\delta W_V(0)}{\delta V}$ is symmetric. 
\end{rem}
To prove that $W_V(0)$ is locally Lipschitz, we use the following lemma.
\begin{lem}
\label{lem:DiffImplyCont}
Let $f[V]\colon L^2([-a,a]) \rightarrow \mathbb R$ be a Fr\'echet differentiable functional with 
$$
f[U] = f[V] + \Big \langle \frac{\delta f}{\delta V} \Big|_{V} , U-V \Big \rangle  + o( \| U - V\|_2 ).
$$
Suppose further that the variation $\frac{\delta f}{\delta V}$ is bounded in an $L^\infty$-neighborhood of $V$.  
Then there exists a $\rho_0 > 0$ and a constant $C(\rho_0,V,a)$ such that for $U\in B^{\infty}(V,\rho_0)$
$$
| f[U] - f[V] | \leq C(\rho_0,V,a) \| U - V\|_{L^\infty([-a,a])}
$$
\end{lem}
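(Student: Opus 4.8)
The plan is to deduce the Lipschitz bound directly from the Fr\'echet differentiability hypothesis together with the stated boundedness of the variation $\frac{\delta f}{\delta V}$ in an $L^\infty$-neighborhood of $V$. The governing identity is
\[
f[U] - f[V] = \Big\langle \frac{\delta f}{\delta V}\Big|_V, U - V\Big\rangle + o(\|U-V\|_2),
\]
so the two contributions to estimate are the linear inner-product term and the little-$o$ remainder. For the linear term I would apply Cauchy--Schwarz to obtain $\big|\langle \frac{\delta f}{\delta V}|_V, U-V\rangle\big| \le \|\frac{\delta f}{\delta V}|_V\|_2 \, \|U-V\|_2$. Since $U,V$ are supported in $[-a,a]$, I would convert the $L^2$ norm of $U-V$ into an $L^\infty$ bound using $\|U-V\|_2^2 = \int_{-a}^a |U-V|^2 \, dx \le 2a\,\|U-V\|_{L^\infty([-a,a])}^2$, giving $\|U-V\|_2 \le \sqrt{2a}\,\|U-V\|_{L^\infty([-a,a])}$. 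The factor $\|\frac{\delta f}{\delta V}|_V\|_2$ is a finite constant depending on $V$ (and implicitly on $a$) by the boundedness hypothesis, so this term is already $\le C(V,a)\,\|U-V\|_{L^\infty([-a,a])}$.

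The remaining issue is to control the $o(\|U-V\|_2)$ term by the same $L^\infty$ norm. By definition of little-$o$, there is a $\rho_0>0$ such that for $\|U-V\|_2$ small the remainder is bounded by $\|U-V\|_2$ itself (absorbing the decaying prefactor into a constant once we are close enough). I would choose $\rho_0$ so that the $L^\infty$-ball $B^\infty(V,\rho_0)$ maps into this regime: for $U\in B^\infty(V,\rho_0)$ we have $\|U-V\|_2 \le \sqrt{2a}\,\rho_0$, which is small for small $\rho_0$. Hence on $B^\infty(V,\rho_0)$ the remainder is $\le \|U-V\|_2 \le \sqrt{2a}\,\|U-V\|_{L^\infty([-a,a])}$, contributing another $C(a)\,\|U-V\|_{L^\infty([-a,a])}$.

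Combining the two estimates yields
\[
|f[U] - f[V]| \le \Big(\sqrt{2a}\,\big\|\tfrac{\delta f}{\delta V}\big|_V\big\|_2 + \sqrt{2a}\Big)\,\|U-V\|_{L^\infty([-a,a])} \equiv C(\rho_0,V,a)\,\|U-V\|_{L^\infty([-a,a])},
\]
which is precisely the claimed inequality. The one subtlety I would be careful about is the interplay between the two metrics: the differentiability statement is phrased in the $L^2$ norm, whereas the conclusion is in the $L^\infty$ norm, so the compact-support estimate $\|\cdot\|_2 \le \sqrt{2a}\,\|\cdot\|_\infty$ is doing the essential work of bridging them, and one must verify that the variation norm $\|\frac{\delta f}{\delta V}|_V\|_2$ is genuinely finite (this is where the boundedness hypothesis on the variation, not merely its existence, is needed). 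Everything else is routine; I do not anticipate a substantive obstacle beyond bookkeeping the constants and the choice of $\rho_0$.
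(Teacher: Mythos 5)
Your proof is correct, but it takes a genuinely different route from the paper's. The paper argues via the Mean Value Theorem: for $U\in B^\infty(V,\rho_0)$ there is an intermediate potential $\tilde V = tV+(1-t)U$, $t\in[0,1]$, such that $f[U]-f[V]=\bigl\langle \frac{\delta f}{\delta V}\big|_{\tilde V}, U-V\bigr\rangle$ holds exactly, with no remainder; the claimed bound then follows by taking $C(\rho_0,V,a)=\sup_{\tilde V\in B^\infty(V,\rho_0)}\bigl|\int_{-a}^a \frac{\delta f}{\delta V}\big|_{\tilde V}\,dx\bigr|$, and this is precisely where the hypothesis that the variation is bounded on a full $L^\infty$-neighborhood of $V$ enters. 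You instead never leave the base point $V$: you apply Cauchy--Schwarz to the linear term, bridge the two norms with the compact-support estimate $\|U-V\|_2\le\sqrt{2a}\,\|U-V\|_{L^\infty([-a,a])}$, and absorb the $o(\|U-V\|_2)$ remainder by shrinking $\rho_0$ until it is dominated by $\|U-V\|_2$. Your version is more economical in its hypotheses: it needs differentiability only at $V$ itself, and the finiteness of $\|\frac{\delta f}{\delta V}|_V\|_2$ is in fact automatic (a Fr\'echet derivative on $L^2([-a,a])$ is a bounded linear functional, hence an $L^2$ function by Riesz representation), so the neighborhood-boundedness hypothesis is scarcely used. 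It also quietly sidesteps a wrinkle in the paper's argument: the Mean Value Theorem requires differentiability along the whole segment $tV+(1-t)U$, and since $B^\infty(V,\rho)$ is defined inside the non-convex admissible set $\mathcal{A}_1^\delta$ (Remark \ref{lem:nonConvex}), that segment need not consist of admissible potentials. What the paper's approach buys in exchange is an exact first-order identity and hence an explicit constant, uniform over the neighborhood, with no dependence on the unquantified modulus hidden in the little-$o$ term --- a modulus on which your choice of $\rho_0$ implicitly depends.
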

\begin{proof}
The Mean Value Theorem and Proposition \ref{prop:wronDeriv} imply that there exists a $\rho_0>0$ such that for every $U\in B^\infty(V,\rho_0)$, there exists a potential 
$ \tilde V = t V + (1-t) U$ 
for some $t\in [0,1]$ such that 
$$
f[U] = f[V] + \Big\langle \frac{\delta f}{\delta V}\Big|_{\tilde V},  U-V \Big\rangle\ .
$$
This gives the estimate
\begin{align*}
| f[U] - f[V] | \leq  \|  U - V \|_\infty  \left| \int_{-a}^a  \frac{\delta f}{\delta V}\Big|_{\tilde V} \ud x \right|.
\end{align*}
The proof is completed by  choosing $C(\rho_0,V,a) = \sup_{\tilde V \in B^{\infty}(V,\rho_0)} \left| \int_{-a}^a  \frac{\delta f}{\delta V}\big|_{\tilde V} \ud x \right|$.
\end{proof}
\begin{prop}[local Lipschitz continuity of $W_V(0)$]
\label{prop:WLip}
 Fix $a,b,\mu \in \mathbb R$, $\reg> 0$, and let $V\in\cA_1^\reg(a,b,\mu)$. For $\rho>0$ denote by
\begin{equation}
\label{eqn:LinftyBall}
B^\infty(V,\rho) = \{ U \in \cA_1^\reg(a,b,\mu) \colon \|V-U\|_\infty < \rho \}
\end{equation}
the $L^\infty(\mathbb R)$ ball around $V$ in $A_1^\reg(a,b,\mu)$. Let $W_V(0)$ be as defined in Eq. (\ref{eqn:Wron}).
There exists $\rho_0>0$ and a constant $C(\rho_0,V,a)$ such that if $U \in B^\infty(V,\rho_0)$
then 
$$
| W_U(0) - W_V(0)|  \leq C(\rho_0,V,a) \| U - V\|_\infty.
$$
\end{prop}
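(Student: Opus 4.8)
The plan is to verify the two hypotheses of Lemma \ref{lem:DiffImplyCont} with $f[V] = W_V(0)$ and then read off the conclusion directly. Proposition \ref{prop:wronDeriv} already supplies the first hypothesis: $W_V(0)$ is Fr\'echet differentiable on $L^2([-a,a])$ with $\frac{\delta W_V(0)}{\delta V} = -\eta_V^+ \eta_V^-$. It therefore remains only to establish the second hypothesis, namely that this variation is bounded on an $L^\infty$-neighborhood of $V$ inside $\cA_1^\reg(a,b,\mu)$; once that is in hand, Lemma \ref{lem:DiffImplyCont} produces a $\rho_0>0$ and a constant $C(\rho_0,V,a)$ giving exactly the stated Lipschitz estimate.

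The boundedness of the variation reduces to a uniform $L^\infty([-a,a])$ bound on the half-bound states $\eta_U^\pm = e_{U\pm}(\cdot,0)$ as $U$ ranges over a small ball $B^\infty(V,\rho_0)$. The natural tool is Proposition \ref{prop:eVbounded}, specialized to $k=0$: writing $\eta_U^\pm = 1 - R_U[U\cdot 1](\cdot,0)$ from \eqref{eqn:contStates} and invoking the uniform resolvent estimate of Proposition \ref{prop:RnConv}, I would obtain $\|\eta_U^\pm\|_{L^\infty([-a,a])} \leq C(a,V,\rho_0)$, hence $\|\eta_U^+\eta_U^-\|_{L^\infty([-a,a])} \leq C(a,V,\rho_0)^2$, so that $\left|\int_{-a}^a \frac{\delta W_U(0)}{\delta V}\,\ud x\right|$ is uniformly bounded. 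Shrinking $\rho_0$ if necessary so that $B^\infty(V,\rho_0)$ lies inside the ball furnished by Proposition \ref{prop:eVbounded} would complete the verification.

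The point requiring care is precisely the passage to $k=0$. At an exceptional potential, where $k=0$ is a zero-energy resonance, the half-bound state $e_{V\pm}(x,0)$ degenerates and $R_U(0)$ ceases to be uniformly bounded, so no neighborhood-uniform bound on $\eta_U^\pm$ can hold. This is exactly the situation excluded by the regularization: every $U\in\cA_1^\reg(a,b,\mu)$ satisfies $W_U(0)^2 \geq \reg > 0$, which keeps $U$ a definite distance from the exceptional set and thereby guarantees that $(\text{Id} + R_0(0)U)^{-1}$ exists with a bound uniform over $B^\infty(V,\rho_0)$, feeding the Neumann-series argument behind Propositions \ref{prop:RnConv} and \ref{prop:eVbounded}. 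I expect this to be the main obstacle: one must confirm that the uniform-in-$U$ resolvent and distorted-plane-wave bounds genuinely survive at $k=0$ on the regularized set, with the constant $\reg$ doing the controlling. Granting that, the remainder is the routine mean value theorem argument already packaged in Lemma \ref{lem:DiffImplyCont}.
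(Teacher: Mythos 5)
Your proposal is correct and essentially identical to the paper's own proof, which likewise combines Proposition \ref{prop:wronDeriv} (the Fr\'echet derivative $-\eta_V^+\eta_V^-$) with Proposition \ref{prop:eVbounded} (the uniform $L^\infty([-a,a])$ bound on the distorted plane waves over a small ball about $V$) and then invokes Lemma \ref{lem:DiffImplyCont}. Your added caution about the $k=0$ threshold, where the constraint $W_U(0)^2\ge\reg$ keeps $U$ away from exceptional potentials, is a genuine subtlety that the paper's two-line proof passes over in silence.
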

\begin{proof} 
Propositions \ref{prop:eVbounded} and \ref{prop:wronDeriv} give that $\frac{\delta W_V}{\delta V} = -\eta^+_{V} \eta^-_{V}$ is pointwise bounded in a neighborhood of $V$. 
The result now follows immediately from Lemma \ref{lem:DiffImplyCont}.  
\end{proof}

%\begin{rem} 
%The set of potentials with one bound state (see Def. \ref{def:VAdmiss}) is difficult to describe in terms of norms on $V$, %{\it e.g.} a constant $M$ and a norm such that $\|V\|_? < M$ implies that $V$ has no more than 1 eigenvalue. 
%The difficulty of finding such an expression can be seen by the 
%weakness of the Cwickel-Lieb-Rozenblum (CLR) inequality in one dimension. 
%\end{rem}

\subsection{Properties of the objective functional, $\Gamma[V]$}
In this section, we begin with a formal calculation of the Fr\'echet derivative of $\Gamma[V]$, given by \eqref{eqn:Gamma}.
We then show that $\Gamma[V]$ is (locally) Lipschitz with respect to $V$. 

\begin{prop}
\label{prop:grad}
Let $L>a$. The Fr\'echet derivative of $\Gamma[V]\colon L^2_{\text{comp}}([-a,a]) \rightarrow \mathbb R$ given in Eq. (\ref{eqn:Gamma}) 
with respect to the potential V is given by 
%\begin{equation}
%\label{eqn:grad}
%\begin{aligned}
%\frac{\delta \Gamma}{\delta V} &=
%- \frac{1}{8k_V} \psi_V R_V(\sqrt{\lambda_V}) P_c \left[ \Re \sum_\pm \langle e_{V\pm} , \beta \psi_V \rangle \beta e_{V\pm} \right] 
% - \frac{\Gamma[V]}{2k_V^2} \psi_V^2 \\
%&+ \sum_\pm \frac{1}{8k_V} \Re  \langle \beta \psi_V, e_{V\pm} \rangle   
%\left(  \frac{1}{2k_V}\langle \beta \psi_V,  A_\pm \rangle \psi_V^2  - \overline{e_{V\pm}} R_V(k_V)\left[ \beta \psi_V\right] \right)
%\end{aligned}
%\end{equation} 
\begin{subequations}
\begin{align}
\label{eqn:chainRuleGamma}
\frac{\delta \Gamma}{\delta V} &=
 \frac{\delta \Gamma}{\delta \psi_V} \left[ \frac{\delta \psi_V}{\delta V} [ \delta V ] \right]
+ \sum_\pm  \frac{\delta \Gamma}{\delta e_{V\pm}} \left[   \frac{\delta e_{V\pm}(\cdot, k_V)}{\delta V} [\delta V] \right]
 - \frac{\Gamma}{k_V} \langle \frac{\delta k_V}{\delta V}, \delta V \rangle  \\
\label{eqn:grad}
&= - \frac{1}{8k_V} \psi_V R_V(\sqrt{\lambda_V}) P_c \left[ \Re \sum_\pm \langle e_{V\pm} , \beta \psi_V \rangle \beta e_{V\pm} \right] 
 - \frac{\Gamma}{2k_V^2} \psi_V^2 \\
&+ \sum_\pm \frac{1}{8k_V} \Re  \langle e_{V\pm}, \beta \psi_V  \rangle   
\left(  \frac{1}{2k_V}\langle \beta \psi_V,  A_\pm \rangle \psi_V^2  - \overline{e_{V\pm}} R_V(k_V)\left[ \beta \psi_V\right] \right)
\nonumber
\end{align}
\end{subequations}
where
\begin{align}
\label{eqn:Apm}
A_\pm(x) =& \pm \imath x e^{\imath k_V x} - R_V(k_V)\left[ 2k_V \phi_{V\pm} \pm \imath x V e^{\pm \imath k_V x} \right] \\
&+ \frac{e^{\imath k_V L}}{2 k_V}\left( \phi_{V\pm}(-L) e_{V+}(x,k_V) + \phi_{V\pm}(L) e_{V-}(x,k_V) \right),
\nonumber
\end{align}
$e_{V\pm} = e_{V\pm}(\cdot, k_V)$,  
and $\phi_{V\pm}(x) \equiv e^{\pm \imath k_V x} - e_{V\pm}(x,k_V)$ satisfies Eq. (\ref{eqn:phi}). 
\end{prop}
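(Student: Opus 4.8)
The plan is to differentiate the representation \eqref{eqn:Gamma},
$$
\Gamma[V] = \frac{1}{16k_V}\sum_\pm\bigl|\langle\beta\psi_V, e_{V\pm}(\cdot, k_V)\rangle\bigr|^2,
$$
by the chain rule, isolating the three ways in which $V$ enters: through the bound state $\psi_V$, through the distorted plane waves $e_{V\pm}$, and through the resonant wavenumber $k_V = \sqrt{\lambda_V+\mu}$ (which appears both in the prefactor $1/16k_V$ and as the second argument of $e_{V\pm}$). Treating $\Gamma$ as a function $\Gamma(\psi_V, e_{V+}, e_{V-}, k_V)$ and applying the product/chain rule produces exactly \eqref{eqn:chainRuleGamma}; the remaining task is to evaluate each constituent Fr\'echet derivative and reassemble everything into a single density paired against $\delta V$, which is \eqref{eqn:grad}.

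First I would compute the three ``inner'' variations. For the eigenpair $(H_V-\lambda_V)\psi_V=0$ with $\|\psi_V\|_2=1$, first-order perturbation theory gives $\delta\lambda_V = \langle\psi_V,\delta V\,\psi_V\rangle$, i.e. $\frac{\delta\lambda_V}{\delta V}=\psi_V^2$, whence $\frac{\delta k_V}{\delta V}=\frac{1}{2k_V}\psi_V^2$. Differentiating the eigenvalue equation and imposing the normalization $\langle\psi_V,\delta\psi_V\rangle=0$ yields $\frac{\delta\psi_V}{\delta V}[\delta V]=-R_V(\sqrt{\lambda_V})P_c[\delta V\,\psi_V]$, the reduced resolvent (well-defined on the range of $P_c$, the orthogonal complement of $\psi_V$). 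For the distorted plane waves I would start from \eqref{eqn:contStates}, $e_{V\pm}=e^{\pm ik\cdot}-R_V(k)[Ve^{\pm ik\cdot}]$, and split the total variation of $e_{V\pm}(\cdot,k_V)$ into an explicit-$V$ part at fixed $k$ and an implicit part through $k_V$,
$$
\frac{\delta e_{V\pm}(\cdot,k_V)}{\delta V}[\delta V]=\frac{\partial e_{V\pm}}{\partial V}[\delta V]+(\partial_k e_{V\pm})\,\frac{\delta k_V}{\delta V}[\delta V].
$$
Using the second resolvent identity $\delta R_V=-R_V(\delta V)R_V$ together with $R_VVe^{\pm ik\cdot}=\phi_{V\pm}=e^{\pm ik\cdot}-e_{V\pm}$, the first part collapses cleanly to $\frac{\partial e_{V\pm}}{\partial V}[\delta V]=-R_V(k_V)[\delta V\,e_{V\pm}]$.

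The main obstacle is the $k$-derivative $\partial_k e_{V\pm}$, which becomes the quantity $A_\pm$ of \eqref{eqn:Apm}. Differentiating \eqref{eqn:contStates} in $k$ produces $\pm ix\,e^{\pm ikx}$, a term $-R_V[\pm ix\,Ve^{\pm ik\cdot}]$ (both harmless, since $Ve^{\pm ik\cdot}$ is compactly supported), and the term $-(\partial_k R_V)[Ve^{\pm ik\cdot}]=-2k\,R_V^2[Ve^{\pm ik\cdot}]=-2k\,R_V[\phi_{V\pm}]$. This last term is the crux: $\phi_{V\pm}$ is a purely outgoing wave that is $O(1)$, not $L^2$, at infinity (explicitly $(1-t_V)e^{ik\cdot}$ for $x>a$ and $-r_Ve^{-ik\cdot}$ for $x<-a$ in the $+$ case), so $R_V$ acting on it is not an absolutely convergent integral and must be regularized. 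I would handle it by inserting a spatial cutoff at $|x|=L>a$, applying the Lagrange/Green identity for $H_V-k^2$ to transfer the resolvent onto the decaying companion factor, and tracking the boundary contributions at $x=\pm L$; these are precisely the terms $\frac{e^{ik_VL}}{2k_V}\bigl(\phi_{V\pm}(-L)e_{V+}+\phi_{V\pm}(L)e_{V-}\bigr)$ appearing in \eqref{eqn:Apm}. Showing the bulk limit exists and identifying the boundary terms from the far-field form of $\phi_{V\pm}$ is the delicate step.

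Finally I would assemble the pieces. The ``outer'' derivatives are routine: writing $c_\pm=\langle\beta\psi_V,e_{V\pm}\rangle$ and $\bar c_\pm=\langle e_{V\pm},\beta\psi_V\rangle$, one has $\frac{\partial\Gamma}{\partial\psi_V}[\delta\psi]=\frac{1}{8k_V}\Re\sum_\pm\bar c_\pm\langle\beta\,\delta\psi,e_{V\pm}\rangle$ and $\frac{\partial\Gamma}{\partial e_{V\pm}}[\delta e]=\frac{1}{8k_V}\Re\,\bar c_\pm\langle\beta\psi_V,\delta e\rangle$, while the prefactor contributes $-\frac{\Gamma}{k_V}\frac{\delta k_V}{\delta V}=-\frac{\Gamma}{2k_V^2}\psi_V^2$, the second term of \eqref{eqn:grad}. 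Substituting $\delta\psi_V=-R_V(\sqrt{\lambda_V})P_c[\delta V\,\psi_V]$ and $\delta e_{V\pm}=-R_V(k_V)[\delta V\,e_{V\pm}]+A_\pm\,\langle\tfrac{\psi_V^2}{2k_V},\delta V\rangle$, and then using the adjoint (incoming) resolvent to move every occurrence of $R_V$ off $\delta V$ and re-express each term as a pairing $\langle\,\cdot\,,\delta V\rangle$, collects the remaining terms of \eqref{eqn:grad}: the reduced-resolvent term $-\frac{1}{8k_V}\psi_V R_V(\sqrt{\lambda_V})P_c\bigl[\Re\sum_\pm\langle e_{V\pm},\beta\psi_V\rangle\beta e_{V\pm}\bigr]$, and the $\sum_\pm$ term in which the $A_\pm$ piece yields the $\frac{1}{2k_V}\langle\beta\psi_V,A_\pm\rangle\psi_V^2$ contribution and the explicit-$V$ piece yields $-\overline{e_{V\pm}}R_V(k_V)[\beta\psi_V]$. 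Care with the reality/conjugation conventions ($V,\beta,\psi_V$ are real while $e_{V\pm}$ and $A_\pm$ are complex, hence the $\Re$ throughout) completes the identification.
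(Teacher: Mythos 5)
Your proposal is correct, and its overall architecture coincides with the paper's proof: the same chain-rule splitting \eqref{eqn:chainRuleGamma}, the same inner derivatives $\frac{\delta \lambda_V}{\delta V}=\psi_V^2$, $\frac{\delta \psi_V}{\delta V}[\delta V]=-R_V(\sqrt{\lambda_V})P_c[\psi_V\,\delta V]$, $\frac{\delta k_V}{\delta V}=\frac{\psi_V^2}{2k_V}$, the same explicit-$V$ contribution $-R_V(k_V)[\delta V\, e_{V\pm}]$, and the same final assembly using symmetry of the outgoing resolvent to move $R_V$ off $\delta V$ (your ``incoming adjoint'' bookkeeping and the paper's ``$R_V$ is symmetric'' are equivalent here because $\delta V$, $\beta$, $\psi_V$ are real and everything sits inside $\Re$). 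The one place where you genuinely diverge is the derivation of $A_\pm$. You differentiate the whole-line representation \eqref{eqn:contStates} in $k$, hit the divergent term $2k R_V^2[Ve^{\pm\imath k\cdot}]=2kR_V[\phi_{V\pm}]$ (divergent since $\phi_{V\pm}$ is only bounded, not decaying), and then regularize by a cutoff at $|x|=L$ plus Green's identity to extract boundary contributions. The paper never writes a divergent quantity: it defines $\phi_{V\pm}$ from the outset as the solution of the boundary value problem \eqref{eqn:phi} on $[-L,L]$ with outgoing boundary conditions, and varies the \emph{entire} boundary value problem, PDE and boundary conditions together, in Eq. \eqref{eqn:DEdeltaPhi}. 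Since the radiation condition depends on $k_V$, the varied problem acquires an inhomogeneous boundary condition proportional to $\delta k_V$; satisfying it forces the addition of homogeneous solutions $\alpha_\pm e_{V+}+\beta_\pm e_{V-}$, and solving for $\alpha_\pm,\beta_\pm$ is exactly what produces the boundary terms $\frac{e^{\imath k_V L}}{2k_V}\left(\phi_{V\pm}(-L)e_{V+}+\phi_{V\pm}(L)e_{V-}\right)$ in \eqref{eqn:Apm}, while $R_V(k_V)[2k_V\phi_{V\pm}]$ is unambiguous because the resolvent acts on data restricted to $[-L,L]$. Your route buys a nicer scattering-theoretic interpretation of $A_\pm$ as a regularized $\partial_k e_{V\pm}$, but it leaves you two obligations that the paper's formulation discharges automatically: carrying out the cutoff/Green's-identity computation rigorously (the step you yourself flag as delicate), and verifying that the sum of bulk and boundary terms is independent of the cutoff $L>a$, which in the boundary-value-problem formulation is just well-posedness of the varied problem for any such $L$.
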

\begin{proof} Equation (\ref{eqn:chainRuleGamma}) is obtained by the chain rule. A detailed computation of each term is given in  Appendix \ref{sec:appCompGrads}. \end{proof}

\begin{rem}
\label{rem:dGdVsym}
If the potential and $\beta$ are symmetric, so is $\frac{\delta \Gamma}{\delta V}$. 
\end{rem}

\begin{prop}[local Lipschitz continuity of $\Gamma$ ]
\label{prop:GLip} 
 Fix $a,b,\mu\in \mathbb R$, $\reg> 0$ and $V\in\cA_1^\reg(a,b,\mu)$ and define $B^\infty(V,\rho)$ as in Eq. (\ref{eqn:LinftyBall}). 
There exists $\rho_0>0$ and a constant $C(V,\rho_0,a)$ such that if $U \in B^\infty(V,\rho) $ then 
$$
|\Gamma[U] - \Gamma[V]|  \leq C(V,\rho_0,a) \|U-V\|_{\infty}. 
$$ 
\end{prop}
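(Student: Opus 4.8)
The plan is to establish local Lipschitz continuity of $\Gamma[V]$ by combining the local Lipschitz continuity of each constituent object appearing in the representation \eqref{eqn:Gamma} — namely the bound state $\psi_V$ and eigenvalue $\lambda_V$, the resonant wavenumber $k_V = \sqrt{\lambda_V+\mu}$, and the distorted plane waves $e_{V\pm}(\cdot,k_V)$ — and then invoking the product/composition structure of $\Gamma$ together with uniform bounds on all factors over the $L^\infty$-neighborhood $B^\infty(V,\rho_0)$. The key inputs already available in the excerpt are Proposition \ref{prop:RnConv} (Lipschitz dependence of $R_V(k)$ on $V$ in $\cB(L^{2,s},H^{2,-s})$), Proposition \ref{prop:LipK} (Lipschitz dependence of $R_V(k)$ on $k$), and Proposition \ref{prop:eVbounded} (uniform boundedness of $e_{U\pm}$ on $[-a,a]$ for $U$ near $V$). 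The strategy mirrors the approach used for $W_V(0)$ in Proposition \ref{prop:WLip}, but $\Gamma$ is a genuinely composite object, so the bookkeeping is heavier.

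First I would control the eigenpair. The eigenvalue $\lambda_V$ is the location of the pole of $R_V(k)$ in the upper half plane at $k=i\sqrt{|\lambda_V|}$; since on $\cA_1^\reg$ we have the uniform constraint $W_V(0)^2\ge\reg$, the eigenvalue stays uniformly bounded away from the continuum edge, so the simple zero of $W_V$ depends Lipschitz-continuously on $V$ via the implicit function theorem and the Lipschitz bound on $W_V(0)$ from Proposition \ref{prop:WLip} (together with an analogous bound at $k=i\kappa$). This yields $|\lambda_U-\lambda_V|\le C\|U-V\|_\infty$ and, since $k_V=\sqrt{\lambda_V+\mu}$ is bounded away from zero on $\cA_1^\reg$ by hypothesis~4 of Definition \ref{def:VAdmiss}, also $|k_U-k_V|\le C\|U-V\|_\infty$. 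The normalized eigenfunction $\psi_V$ is then Lipschitz in $L^2$ (hence, by elliptic regularity and compact support considerations, in $L^\infty([-a,a])$) using the resolvent identity \eqref{eqn:2ndId} applied near the eigenvalue and the spectral gap; I would extract $\psi_V$ as a contour integral of $R_V(k)$ around $i\sqrt{|\lambda_V|}$ and differentiate the resulting Riesz projection.

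Next I would handle the continuum factor $e_{U\pm}(\cdot,k_U)$. Using the representation \eqref{eqn:contStates}, $e_{U\pm}(\cdot,k_U)=e^{\pm i k_U x}-R_U(k_U)[U e_{0\pm}(\cdot,k_U)]$, I would estimate the difference $e_{U\pm}(\cdot,k_U)-e_{V\pm}(\cdot,k_V)$ by splitting into three Lipschitz contributions: the change $V\to U$ in the potential multiplier and in the resolvent (controlled by Proposition \ref{prop:RnConv}), the change $k_V\to k_U$ in the resolvent (controlled by Proposition \ref{prop:LipK}), and the change in the free exponentials and in $e_{0\pm}(\cdot,k)$ (elementary, since $|x|\le a$ on the support). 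All three are bounded by $C\|U-V\|_\infty$, using $|k_U-k_V|\le C\|U-V\|_\infty$ from the previous step, and Lemma \ref{lem:compL2s} to pass from $H^{2,-s}$ back to an honest bound on $[-a,a]$. The uniform boundedness in Proposition \ref{prop:eVbounded} supplies the finiteness of all the coefficient factors, so that in estimating products such as $\langle\beta\psi_U,e_{U\pm}(\cdot,k_U)\rangle$ one can add and subtract terms and bound each by (uniform bound)$\times$(Lipschitz difference).

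Finally I would assemble the pieces. Writing $\Gamma[U]-\Gamma[V]$ using the representation $\Gamma=\tfrac{1}{16k}\sum_\pm|\langle\beta\psi,e_\pm\rangle|^2$ and the algebraic identity $|z|^2-|w|^2=\Re\big((z-w)\overline{(z+w)}\big)$ for the matrix-element squares, together with $|1/k_U-1/k_V|\le C|k_U-k_V|$ (valid since $k$ is bounded away from $0$), I would bound $|\Gamma[U]-\Gamma[V]|$ by a finite sum of terms each of the form (uniformly bounded factor)$\times\|U-V\|_\infty$, with all constants depending only on $\rho_0,V,a$ through the neighborhood bounds. The main obstacle, and the step demanding the most care, is the Lipschitz dependence on the \emph{two simultaneously moving arguments} of the distorted plane wave $e_{U\pm}(\cdot,k_U)$: both the potential and the evaluation wavenumber vary with $U$, so I must chain the $V$-Lipschitz estimate of Proposition \ref{prop:RnConv} with the $k$-Lipschitz estimate of Proposition \ref{prop:LipK} and the already-established Lipschitz bound $|k_U-k_V|\le C\|U-V\|_\infty$ — this is where the uniform-in-$k$ nature of Proposition \ref{prop:RnConv} and the uniform neighborhood bounds are essential to keep the constant uniform over $B^\infty(V,\rho_0)$.
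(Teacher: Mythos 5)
Your proposal is correct, and its core architecture coincides with the paper's: both telescope $\Gamma[U]-\Gamma[V]$ through the product structure of \eqref{eqn:Gamma} — the $1/k$ factor, the eigenfunction factor, the dependence of the distorted plane waves on the potential at fixed wavenumber, and their dependence on the wavenumber at fixed potential — and bound each piece by $C\|U-V\|_\infty$ using Propositions \ref{prop:RnConv}, \ref{prop:LipK} and \ref{prop:eVbounded}, with the remaining factors absorbed by uniform neighborhood bounds and the identity $\bigl||a|^2-|b|^2\bigr|\le |a+b|\,|a-b|$. The genuine difference is how you control the eigenpair: the paper obtains $|\lambda_U-\lambda_V|\lesssim \|U-V\|_\infty$ and $\|\psi_U-\psi_V\|_2\lesssim\|U-V\|_\infty$ from the Fr\'echet derivative formulas \eqref{eqn:dlamdV} and \eqref{eqn:dpsidV} (already computed in Appendix \ref{sec:appCompGrads} for the gradient computation) together with Theorem \ref{thm:lap}, whereas you invoke the implicit function theorem applied to the Wronskian zero for $\lambda_U$ and a Riesz spectral projection contour integral for $\psi_U$. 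Both routes work. Yours buys operator-theoretic self-containedness and avoids the paper's slightly informal $o(\|U-V\|_\infty)$ bookkeeping; the paper's buys brevity by recycling the appendix computations, and it sidesteps an issue your Wronskian branch would face: Proposition \ref{prop:WLip} gives Lipschitz continuity of $W_U$ only at $k=0$, so the ``analogous bound at $k=i\kappa$'' you allude to would have to be proved separately (your Riesz projection alternative does not need it, so that branch is the one to keep). Two minor corrections: the constraint $W_V(0)^2\ge\reg$ does not by itself bound $|\lambda_V|$ away from the continuum edge uniformly over $\cA_1^\reg$, but no such uniform bound is needed — simplicity of $\lambda_V<0$ and positivity of $k_V$ at the fixed $V$ persist on a small $L^\infty$ ball by continuity, which is all the local statement requires; and upgrading $\psi_U-\psi_V$ from $L^2$ to $L^\infty([-a,a])$ is unnecessary, since in the matrix-element term the $L^2$ bound suffices once $e_{U\pm}$ is controlled by Proposition \ref{prop:eVbounded}.
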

\begin{proof}
First we use the triangle inequality to obtain
\begin{align}
\nonumber
\big| \Gamma[U] - \Gamma[V]  \big| 
& = \Big|  \sum_\pm \frac{1}{16k_U} |\langle \beta \psi_U, e_{U\pm}(\cdot, k_U)\rangle|^2 
 - \frac{1}{16k_V} |\langle \beta \psi_V, e_{V\pm}(\cdot, k_U)\rangle|^2 \Big| \\
\nonumber
& \leq \frac{1}{16}  \sum_\pm 
\Big| \frac{1}{k_U}-\frac{1}{k_V}\Big|\ \  |\langle \beta \psi_U, e_{U\pm}(\cdot, k_U)\rangle|^2 
 \\
\nonumber
&\quad \quad+ \frac{1}{16k_V} \ \sum_\pm\ \left[\ \Big| |\langle \beta \psi_U, e_{U\pm}(\cdot, k_U)\rangle|^2  
 -  |\langle \beta \psi_V, e_{U\pm}(\cdot, k_U)\rangle|^2 \Big|\ \right. \\
\nonumber
&\quad \quad + \Big|  |\langle \beta \psi_V, e_{U\pm}(\cdot, k_U)\rangle|^2
-  |\langle \beta \psi_V, e_{V\pm}(\cdot, k_U)\rangle|^2 \Big| \\
\nonumber
&\quad\quad + \left.  \Big|  |\langle \beta \psi_V, e_{V\pm}(\cdot, k_U)\rangle|^2 
 -  |\langle \beta \psi_V, e_{V\pm}(\cdot, k_V)\rangle|^2 \Big|\ \right] \\
 &\equiv \sum_\pm A_\pm +B_\pm +C_\pm + D_\pm
 \label{eqn:GammaDiff}
\end{align}
We now treat the terms $A_\pm-D_\pm$ in Eq. (\ref{eqn:GammaDiff}) in turn. We'll repeatedly 
 use the inequality $\big| |a|^2 - |b|^2 \big| \leq |a+b||a-b|$.

\paragraph{A.}
We compute
\begin{align*}
A_\pm =  \frac{|k_V - k_U|}{16k_Vk_U}|\langle \beta \psi_U, e_{U\pm}(\cdot, k_U)\rangle|^2.
\end{align*}
Recalling $k_V = \sqrt{\lambda_V + \mu}$ and using Eq. (\ref{eqn:dlamdV}) we have
\begin{align}
|k_V - k_U| &\leq \frac{1}{2k_V} |\lambda_V - \lambda_U| + o(|\lambda_V - \lambda_U|) \nonumber \\
&\leq  \frac{1}{2k_V} \langle \psi_V^2, |U-V| \rangle + o(\|U-V\|_\infty) \nonumber \\
&\leq  \frac{1}{2k_V}\|U-V\|_\infty + o(\|U-V\|_\infty). \label{kUVdiff}
\end{align}

\paragraph{B.} We compute
\begin{align*}
B_\pm &=  \frac{1}{16k_V} \Big| |\langle \beta \psi_U, e_{U\pm}(\cdot, k_U)\rangle|^2  
 - |\langle \beta \psi_V, e_{U\pm}(\cdot, k_U)\rangle|^2 \Big| \\
 &\leq  \frac{1}{16k_V} |\langle \beta (\psi_U+ \psi_V), e_{U\pm}(\cdot, k_U)\rangle| |\langle \beta (\psi_U- \psi_V), e_{U\pm}(\cdot, k_U)\rangle| \\
 &\leq  \frac{1}{16k_V} |\langle \beta (\psi_U+ \psi_V), e_{U\pm}(\cdot, k_U)\rangle|  \| e_{U\pm}(\cdot, k_U) \|_{L^2(K)} \| \beta (\psi_U- \psi_V) \|_2.
\end{align*}
Now using Eq. (\ref{eqn:dpsidV}) and Thm. \ref{thm:lap} we obtain 
\begin{align*}
\| \psi_U- \psi_V \|_2   &\leq \| R_V(\sqrt{\lambda}) P_c [\psi_V(U-V)] \|_2 + o(\|U-V\|)_{\infty} \\
&\leq C(V) \|U-V\|_\infty + o(\|U-V\|)_{\infty} . 
\end{align*}

\paragraph{C.} We compute
\begin{align*} 
C_\pm &=  \frac{1}{16k_V} \Big| |\langle \beta \psi_V, e_{U\pm}(\cdot, k_U)\rangle|^2  
 - |\langle \beta \psi_V, e_{V\pm}(\cdot, k_U)\rangle|^2 \Big| \\
 &\leq \frac{1}{16k_V} |\langle \beta \psi_V, e_{U\pm}(\cdot, k_U) + e_{V\pm}(\cdot, k_U) \rangle|
 |\langle \beta \psi_V, e_{U\pm}(\cdot, k_U) - e_{V\pm}(\cdot, k_U) \rangle| 
\end{align*}
Using  Eq. (\ref{eqn:contStates}) we have that 
 \begin{align*}
e_{U\pm}(\cdot, k_U) - e_{V\pm}(\cdot, k_U) &= -R_U(k_U)[Ue^{\pm\imath k_U x}] + R_V(k_U)[Ve^{\pm \imath k_U x}] \\
&= R_U(k_U)[(V-U) e^{\pm\imath k_U x}] + (R_V(k_U) - R_U(k_U)) [Ve^{\pm \imath k_U x}]
 \end{align*}
Fact: For $K\subset B(0,r)$ compact we have
\begin{align}
\nonumber
\|f\|_{L^\infty(K)}  &= \| (1+|x|^2)^{-\frac{s}{2}} f (1+|x|^2)^{\frac{s}{2}} \|_{L^\infty(K)}  \\
\nonumber
&\leq C_r \| (1+|x|^2)^{-\frac{s}{2}} f  \|_{L^\infty(\mathbb R)}  \\
\nonumber
&\leq C_r \| (1+|x|^2)^{-\frac{s}{2}} f  \|_{H^1(\mathbb R)}  \\
\label{eqn:H1weighted}
& = C_r \| f  \|_{H^{1,-s} (\mathbb R)}.
\end{align}
Thus, by Prop. (\ref{prop:RnConv}) and Eq. (\ref{eqn:H1weighted}) we have 
\begin{align*}
&\| e_{U\pm}(\cdot, k_U) - e_{V\pm}(\cdot, k_U) \|_{L^\infty(K)} \\
&\leq C_r \| e_{U\pm}(\cdot, k_U) - e_{V\pm}(\cdot, k_U) \|_{H^{1,-s} (\mathbb R))} \\
&\leq C_r\left(  \| R_V\|_{L^{2,s}\rightarrow H^{2,-s}} \| (V-U) e^{\pm \imath k_U x} \|_{L^{2,s}}  + \| R_V - R_U \|_{L^{2,s}\rightarrow H^{2,-s}} \| V e^{\pm \imath k_U x} \|_{L^{2,s}} \right)  \\
&\leq C_r\left(  \| R_V\|_{L^{2,s}\rightarrow H^{2,-s}} \|e^{\pm \imath k_U x} \|_{L^{2,s}(K)}  + C(V,\rho_0) \|V e^{\pm \imath k_U x} \|_{L^{2,s}(K)} \right) \|V-U\|_{\infty}
\end{align*}

\paragraph{D.} We compute
\begin{align*}
D_\pm &=  \frac{1}{16k_V} \Big| |\langle \beta \psi_V, e_{V\pm}(\cdot, k_U)\rangle|^2  
 - |\langle \beta \psi_V, e_{V\pm}(\cdot, k_V)\rangle|^2 \Big| \\
 &\leq  \frac{1}{16k_V} |\langle \beta \psi_V, e_{V\pm}(\cdot, k_U) +e_{V\pm}(\cdot, k_V)\rangle|
  |\langle \beta \psi_V, e_{V\pm}(\cdot, k_U) - e_{V\pm}(\cdot, k_V)\rangle|
\end{align*}
But, 
\begin{align} 
 e_{V\pm}(\cdot, k_U) - e_{V\pm}(\cdot, k_V) = & e^{\pm \imath k_V x}\left( e^{\pm \imath (k_U - k_V)x} -1 \right) - R_V(k_U) \left[ V e^{\pm\imath k_Vx} \left( e^{\pm \imath (k_U - k_V)x} -1 \right)\right] \nonumber\\
 & + \left( R_V(k_V) - R_V(k_U) \right) [Ve^{\pm \imath k_V x}] \label{but}
\end{align}
Insertion of \eqref{but} into the above bound on $D_\pm$ and use of Prop. \ref{prop:LipK}, \eqref{kUVdiff}
and Proposition \ref{prop:eVbounded} implies $|D_\pm|\le C\|V-U\|_\infty$.\\

Proposition \ref{prop:GLip} now follows from assembling the estimates {\bf A}-{\bf D}.
\end{proof}

\begin{rem}\label{rem:beta2V}
As mentioned in Remark \ref{rem:2classes}, 
we shall consider optimization problems where (i) $\beta(x)$ is a fixed specified function 
and where (ii) $\beta(x)=V(x)$.  For type (ii) problems  
$\Gamma[V]$ is given by
\begin{equation}
\Gamma[V] = \frac{1}{16 k_V} \sum_\pm |\langle V \psi_V, e_{V\pm}(\cdot,k_{V}) \rangle  |^2 .
\label{betaVgam}
\end{equation}
That $\Gamma[V]$ in this case is Lipschitz follows by the same arguments as above. Furthermore, it is  
Fr\'echet differentiable and the additional contribution of the Fr\'echet derivative of $\Gamma[V]$, to be added to the expression in
 Proposition \ref{prop:grad}, is given by:
$$
\frac{1}{8k_V} \psi_V(x)\  \Re \sum_\pm e_{V\pm}(x,k_V)  \langle  e_{V\pm}(\cdot ,k_V) , V \psi_V \rangle. 
$$
\end{rem}

\subsection{Existence of a minimizer}
We show that the potential design problem attains a minimum in the admissible class $\cA^\reg_1(a,b,\mu)$. Define
\begin{align}
\gamma_*^\reg(a,b,\mu) = \inf \{ \Gamma[V] \colon V \in \mathcal A_1^\reg(a,b,\mu) \} \geq 0. 
\end{align}

\begin{prop}%[Existence of a minimizer] 
\label{thm:existence}
 There exists $V_* \in \mathcal A_1^\reg(a,b,\mu)$ such that $\Gamma[V_*] = \gamma_*^\reg(a,b,\mu) $. 
\end{prop}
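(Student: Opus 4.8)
The plan is to apply the direct method of the calculus of variations. I would begin with a minimizing sequence $\{V_n\}\subset\cA_1^\reg(a,b,\mu)$, so that $\Gamma[V_n]\to\gamma_*^\reg(a,b,\mu)$, and extract from it a limit that is both admissible and attains the infimum. The constraint $\|V_n\|_{H^1}\le b$ makes $\{V_n\}$ bounded in $H^1(\R)$ with common support in $[-a,a]$; passing to a subsequence, $V_n\rightharpoonup V_*$ weakly in $H^1$. The decisive gain is compactness: in one dimension the embedding $H^1([-a,a])\hookrightarrow C([-a,a])$ is compact, so after a further subsequence $V_n\to V_*$ \emph{strongly} in $L^\infty$. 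This strong $L^\infty$ convergence is precisely the topology in which all of our earlier continuity statements are phrased, which is why obtaining it is the crucial step.

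Next I would verify that $V_*\in\cA_1^\reg(a,b,\mu)$. The support condition persists under uniform limits, and $\|V_*\|_{H^1}\le b$ follows from weak lower semicontinuity of the $H^1$-norm. The spectral constraint $W_{V_*}(0)^2\ge\reg$ follows from the local Lipschitz continuity of $V\mapsto W_V(0)$ in $L^\infty$ (Proposition \ref{prop:WLip}), which gives $W_{V_n}(0)\to W_{V_*}(0)$; in particular $W_{V_*}(0)\ne 0$, so $0$ is neither an eigenvalue nor a resonance of $H_{V_*}$. To see that $H_{V_*}$ has exactly one bound state, I would use the continuity of $\lambda_V$ and $\psi_V$ in $L^\infty$ established en route to Proposition \ref{prop:GLip} (through $|\lambda_V-\lambda_U|\le\langle\psi_V^2,|U-V|\rangle$ and the estimate on $\|\psi_U-\psi_V\|_2$). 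For $n$ large, connect $V_n$ to $V_*$ by the straight segment $t\mapsto(1-t)V_n+tV_*$, which remains in a small $L^\infty$-ball on which $W(0)$ is bounded away from zero; the argument of Lemma \ref{lem:Aeps} (no eigenvalue can cross the threshold without producing a zero of $W(0)$) then shows the eigenvalue count is preserved, so $H_{V_*}$ has exactly one negative eigenvalue.

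The main obstacle is the remaining \emph{open} constraint $k_{V_*}=\sqrt{\lambda_{V_*}+\mu}>0$, that is, ruling out the threshold degeneration $\lambda_{V_n}\to-\mu$, $k_{V_n}\to 0^+$. Since $-\mu<\lambda_{V_n}<0$, continuity only yields $\lambda_{V_*}\in[-\mu,0]$, with $\lambda_{V_*}\ne 0$ already excluded by $W_{V_*}(0)\ne 0$; the danger is $\lambda_{V_*}=-\mu$. I would rule this out along the minimizing sequence directly. By Proposition \ref{prop:tkGreaterZero} the transmission coefficient satisfies $|t_{V_n}(k_{V_n})|\ge\exp\bigl(-2a\int_{-a}^a|V_n|\bigr)\ge c_0>0$ uniformly, and by Propositions \ref{prop:RnConv}, \ref{prop:LipK} and \ref{prop:eVbounded} the distorted plane waves $e_{V_n\pm}(\cdot,k_{V_n})$ converge to the half-bound states $e_{V_*\pm}(\cdot,0)$ of \eqref{eqn:halfBoundState}; combined with $\psi_{V_n}\to\psi_{V_*}$, the matrix elements in \eqref{eqn:Gamma} converge. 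If $k_{V_n}\to 0$, the prefactor $1/(16k_{V_n})$ in $\Gamma$ blows up, forcing $\Gamma[V_n]\to\infty$ unless both limiting couplings $\langle\beta\psi_{V_*},e_{V_*\pm}(\cdot,0)\rangle$ vanish; since $\Gamma[V_n]\to\gamma_*^\reg<\infty$, the threshold degeneration is incompatible with minimization (away from the non-generic case of vanishing half-bound-state coupling). Hence $\lambda_{V_*}>-\mu$, so $V_*\in\cA_1^\reg(a,b,\mu)$.

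Finally, the local Lipschitz continuity of $\Gamma$ in $L^\infty$ (Proposition \ref{prop:GLip}) gives $\Gamma[V_*]=\lim_n\Gamma[V_n]=\gamma_*^\reg(a,b,\mu)$, so $V_*$ is the desired minimizer. I expect the genuinely delicate point to be the preservation of the two \emph{open} spectral conditions across the limit: the eigenvalue count and, above all, the strict positivity $k_{V_*}>0$, both of which hinge on the constraint $W_V(0)^2\ge\reg$ keeping the threshold $k=0$ spectrally clean and on the blow-up of $\Gamma$ as $k_V\to 0$.
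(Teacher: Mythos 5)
Your proposal is correct in its core and follows essentially the same route as the paper's proof: a minimizing sequence, compactness of the $H^1$-bounded family supported in $[-a,a]$ in the uniform topology (the paper invokes Arzel\`a--Ascoli; your compact embedding $H^1\hookrightarrow C([-a,a])$ is the same fact), then Propositions \ref{prop:WLip} and \ref{prop:GLip} to pass to the limit in the constraint $W_V(0)^2\ge\reg$ and in $\Gamma$. Where you differ is in thoroughness: you scrutinize the two \emph{open} constraints that the paper's proof passes over in silence, namely the exactly-one-eigenvalue condition and $k_{V_*}>0$. Your argument for the eigenvalue count --- connect $V_n$ to $V_*$ by a segment lying in a small $L^\infty$-ball on which $W(0)^2$ stays bounded below by Proposition \ref{prop:WLip}, then run the continuation argument of Lemma \ref{lem:Aeps} --- is sound and fills a genuine omission. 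Your threshold argument, by contrast, does not close: as you yourself concede, the blow-up of the prefactor $1/(16k_{V_n})$ produces a contradiction only when the limiting couplings $\langle\beta\psi_{V_*},\eta_{V_*}^{\pm}\rangle$ are nonzero, so a minimizing sequence with $k_{V_n}\to0$ and simultaneously vanishing matrix elements is not excluded; moreover, the continuity results you cite (Propositions \ref{prop:RnConv} and \ref{prop:LipK}) carry constants valid only for $k$ bounded away from zero --- the free resolvent kernel $\imath e^{\imath k|x-y|}/(2k)$ degenerates at the threshold --- so even the convergence $e_{V_n\pm}(\cdot,k_{V_n})\to\eta_{V_*}^{\pm}$ would require a separate threshold analysis. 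This is a real gap, but it is one you inherited rather than created: the paper's own proof asserts $V_*\in\cA_1^\reg(a,b,\mu)$ from the continuity of $W_V(0)$ alone and never verifies conditions 3 and 4 of Definition \ref{def:VAdmiss} for the limit either. In short: same method, identical use of the key lemmas, and a more candid accounting of where the delicate points lie.
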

\begin{proof} 
Let $\{V_n\} \subset \mathcal A_1^\reg(a,b,\mu)$ be a minimizing sequence, {\it i.e.}  $\lim_{n\uparrow \infty} \Gamma[V_n] = \gamma_*$. Since $\|V_n\|_{H^1(\mathbb{R})}\leq b$, there is a weakly convergent subsequence converging to $V_* \in H^1$. Moreover,  the family $\{V_n\} $ is uniformly bounded and equicontinuous on $[-a,a]$. By the Arzel\`a-Ascoli theorem, there is  a subsequence (which we continue to denote by $\{ V_n \}$) converging to $V_* \in H^1$ and 
 such that $V_n \rightarrow V_*$ uniformly on $[-a,a]$. 
By Prop. \ref{prop:WLip}, $W_V(0)$ is continuous with respect to $V$ on $\mathcal A_1^\reg(a,b,\mu)$, implying $V_*\in \mathcal A_1^\reg(a,b,\mu)$. 
By Prop. \ref{prop:GLip}, $\Gamma[V]$ is continuous on $\mathcal A_1^\reg(a,b,\mu)$ and 
\begin{align*}
\Gamma[V_*] = \lim_{n\uparrow \infty} \Gamma[V_n] = \gamma_* \ .
\end{align*}
\end{proof}

\begin{rem} 
Since $\mathcal{A}_1^\reg$ is not convext   (Remark \ref{lem:nonConvex}), uniqueness of the minimizer is not guaranteed. 
\end{rem}

\begin{cor}
\label{cor:betaEqualV}
By Remark \ref{rem:beta2V}, a minimizer also exists if we take $\Gamma$ with $\beta = V$. 
\end{cor}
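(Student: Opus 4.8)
The plan is to observe that Corollary \ref{cor:betaEqualV} is an almost immediate consequence of the proof of Proposition \ref{thm:existence}, since that argument uses only two facts about the objective: that $\Gamma[\cdot]\ge 0$ and that $\Gamma[\cdot]$ is continuous on $\cA_1^\reg(a,b,\mu)$ with respect to uniform convergence on $[-a,a]$. All of the machinery used to extract a convergent minimizing sequence and to verify that its limit remains admissible---the $H^1$-bound, the Arzel\`a--Ascoli compactness argument, and the continuity of $W_V(0)$ from Proposition \ref{prop:WLip}---is entirely independent of the choice of $\beta$. Thus, once we know that $\Gamma$ with $\beta=V$, as given in \eqref{betaVgam}, is continuous on the admissible set, the existence proof of Proposition \ref{thm:existence} transfers verbatim.

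It therefore remains only to establish the local Lipschitz continuity of $\Gamma$ in the case $\beta=V$, which is exactly the assertion of Remark \ref{rem:beta2V}. First I would start from the triangle-inequality decomposition used in the proof of Proposition \ref{prop:GLip}. Comparing $\Gamma[U]$ and $\Gamma[V]$ now requires chaining $\langle U\psi_U,e_{U\pm}(\cdot,k_U)\rangle$ to $\langle V\psi_V,e_{V\pm}(\cdot,k_V)\rangle$. Inspection shows that the prefactor term (``$A_\pm$''), the term replacing $e_{U\pm}$ by $e_{V\pm}$ (``$C_\pm$''), and the term replacing $k_U$ by $k_V$ (``$D_\pm$'') are literally unchanged, because in each of those steps the first slot is held fixed at $V\psi_V$, i.e. the $\beta$-slot is the single fixed potential $V$. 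The only modification occurs in the step (``$B_\pm$'') in which $\psi_U$ is replaced by $\psi_V$: here one must simultaneously change the $\beta$-slot from $U$ to $V$, so the relevant difference is $U\psi_U-V\psi_V$ rather than $\beta(\psi_U-\psi_V)$.

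The key new estimate is then the splitting
\[
U\psi_U - V\psi_V \;=\; (U-V)\,\psi_U \;+\; V\,(\psi_U-\psi_V),
\]
which enters the ``difference'' factor produced by the inequality $\big||a|^2-|b|^2\big|\le|a+b||a-b|$. The second summand is handled exactly as in the original term $B_\pm$, using the bound $\|\psi_U-\psi_V\|_2\le C(V)\|U-V\|_\infty$ established there. The first summand is new but elementary: since $\|\psi_U\|_2=1$ and $V$ is supported in $[-a,a]$, pairing $(U-V)\psi_U$ against the uniformly bounded (by Proposition \ref{prop:eVbounded}) distorted plane wave $e_{U\pm}$ is controlled by $\|(U-V)\psi_U\|_2\le\|U-V\|_\infty$. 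Combining with the (unchanged) estimates for $A_\pm$, $C_\pm$, $D_\pm$ yields $|\Gamma[U]-\Gamma[V]|\le C(V,\rho_0,a)\|U-V\|_\infty$, the analogue of Proposition \ref{prop:GLip} for $\beta=V$. I do not anticipate any genuine obstacle: the sole substantive addition relative to the $\beta$-fixed case is the single term $(U-V)\psi_U$, which is dominated trivially in $L^\infty$ by the normalization of $\psi_U$ and the compact support of the potential. With Lipschitz continuity (hence continuity) in hand, the proof of Proposition \ref{thm:existence} applies verbatim, producing the desired minimizer $V_*\in\cA_1^\reg(a,b,\mu)$.
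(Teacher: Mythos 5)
Your proposal is correct and follows essentially the same route as the paper: the paper's proof of Corollary \ref{cor:betaEqualV} is simply the citation of Remark \ref{rem:beta2V}, which asserts that Lipschitz continuity of $\Gamma$ with $\beta=V$ ``follows by the same arguments'' as Proposition \ref{prop:GLip}, after which the existence argument of Proposition \ref{thm:existence} applies unchanged --- exactly your plan. Your write-up in fact supplies the one detail the paper leaves implicit, namely that only the $B_\pm$ term changes and is handled via the splitting $U\psi_U - V\psi_V = (U-V)\psi_U + V(\psi_U-\psi_V)$, with $\|(U-V)\psi_U\|_2 \le \|U-V\|_\infty$ from the normalization $\|\psi_U\|_2=1$; this is a correct completion of the paper's sketch.
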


\section{Numerical solution of the optimization problem}
\label{sec:numMeth}
In this section, we discuss a numerical solution of the potential design problem (PDP)
 \begin{equation}
 \min_{V\in\cA_1^\reg(a,b,\mu)} \Gamma\left[V\right]
 \label{eqn:PDP}
 \end{equation}
for fixed $a,b,\mu,\reg,$ and $ \beta(x)$,  where $\Gamma[V]$ is given in Eq. \eqref{eqn:Gamma} and $\cA_1^\reg(a,b,\mu)$ in Eq. \eqref{eqn:Aeps}. 

\paragraph{Forward Problem.} We refer to the evaluation of $\Gamma[V]$ and $W_V(0)$ for a given $V\in\cA_1^\reg(a,b,\mu)$ as the forward problem. 

To evaluate the objective function $\Gamma[V]$, first the eigenpair $(\lambda_V,\psi_V)$ satisfying Eq. (\ref{psi0lam0}) is computed using a three-point finite difference discretization and the Matlab \verb+eigs+ command. Next, the distorted plane waves $e_{V\pm}(x,\sqrt{\mu + \lambda_V})$ are computed using the decomposition as in 
Eq. (\ref{eqn:eVinTermsOfPhi}) and then solving Eq. (\ref{eqn:phi}) using the same discretization. The integrals for $\Gamma[V]$ are then evaluated using the trapezoidal rule.

The evaluation of $W_V(0)$ requires the distorted plane waves at $k=0$, which are computed using a Crank-Nicholson method. For numerical stability, the Wronskian, which is analytically a constant in $x$, is computed on a uniform grid and is averaged over the spatial domain. As a check on the discretization, we ensure that the variance of the Wronskian does not exceed a specified tolerance. 

\paragraph{Optimization Problem.} Local optima of Eq. (\ref{eqn:PDP}) are found using a line-search based L-BFGS quasi-Newton interior-point method \cite{NW2006} as implemented in the Matlab command \verb+fmincon+. 
We use the {\it optimize-then-discretize approach}, where gradients are computed as in Proposition \ref{prop:wronDeriv} and \ref{prop:grad} and  evaluated using the discretized counterparts.
The constraints, 
\begin{subequations}
\label{eqn:constraints}
\begin{align}
\lambda+\mu &\geq 0 \\
W_V(0)^2&\geq \reg \\
\|V\|_{H^1} &\leq b  
\end{align}
\end{subequations} 
are enforced using a logarithmic-barrier function. The method terminates when the line search cannot find a sufficient decrease in the objective function. 

In the numerical experiments, presented in Section \ref{sec:Results}, we use a computational domain larger than the interval $[-a,a]$ defining the support of the potential $V$ and depending on the magnitude of $a$, between 1000 and 3000 grid points. The method converges in less than 100 iterations and takes approximately 5-20 minutes using a 2.4 GHz dual processor machine with 2GB memory.
 
\paragraph{Time-dependent simulations.}
In Sections \ref{timeDependentSim} and \ref{sec:Filtering} we study time evolution for the initial value problem in Eq. \eqref{forced-schrod}.
This is accomplished using the same discretization as above and the time stepping routine for stiff ordinary differential equations implemented in \verb+ode15s+ in Matlab. The outgoing boundary conditions are approximated using a large domain with a dissipative term localized at the boundary. 
%\cite{IfBeChSk1987}. 

%Note that the boundary conditions are approximately imposed numerically by adding a dissipative term localized at the boundary  
%\begin{align*}
%\imath \phi_t &= H_V  \phi + \beta(x) \cos(\mu t) \phi - \imath \gamma(x) \phi \\
%\gamma(x) &= \gamma_0 \left( \sech^2(x-L) + \sech^2(x+L) \right)
%\end{align*}
%as in  \cite{IfBeChSk1987}. There are more precise ways to impose these boundary conditions \cite{nlsBCreview, Zh2006, SoSt2007, EhZh2008}.

\section{Results of numerical experiments}
\label{sec:Results}

In this section, we present the results of many numerical experiments using the methods described in Section \ref{sec:numMeth} to study locally optimal solutions of the potential design problem (\ref{eqn:PDP}). 
The constraints in Eq. (\ref{eqn:PDP}) depend on $\mu$ (forcing frequency), $a$ (support width), $b$ ($H^1$ bound on $V$), and $\reg$ (relaxation parameter), while the objective function depends on the choice of  spatial perturbation of the potential, $\beta(x)$. For $\reg$ sufficiently small and $b$ sufficiently large, we find that in all numerically computed solutions of Eq. (\ref{eqn:PDP}), a local optimum is achieved at an interior point of the constraint set, $\mathcal{A}_1^{\reg}(a,b,\mu)$, {\it i.e.} the constraints  (\ref{eqn:constraints}) are not active at the optimal solution. 
This is in contrast to the structure of optimal solutions of other design problems studied in 
\cite{Cox1990,II:1986uq,OptimizationOfScatteringResonances,Kao:2005pt,Krein:1955ye,osher-santosa-jcp01} where the optimal potentials always attain the bounds and are referred to as ``bang bang'' controls.

We conjecture that the $H^1$ bound on $V$ can be relaxed in Proposition \ref{thm:existence} and that the constraints  of a compactly supported potential with a finite number of  bound states is sufficient for the minimization to be well posed, {\it i.e.}
there exists $b_0, \reg_0>0$ such that for $b\ge b_0$ and $\reg<\reg_0$: 
$$\min_{V\in\mathcal{A}_1^{\reg}(a,b,\mu)}\ \Gamma[V]\ =\ \min_{V\in\mathcal{A}_1^{0}(a,\infty,\mu)}\  
\Gamma[V].$$ 
Thus, we consider potential optimization problems for the two classes of $\beta(x)$ in Remark \ref{rem:2classes}
and vary $\mu$ and $a$. 

 % If optimization is performed without the constraint in Eq. (\ref{eqn:constraints}b), the solution will sometimes converge %to a potential supporting more than one bound state. 

%In all cases, we fix $\beta = V$ as in Corollary \ref{cor:betaEqualV}.  
%\begin{equation*}
%\beta(x) = \mathbf{1}_{[-2,2]} =  \begin{cases}
%1 & -2<x<2\\
%0 & \text{otherwise}. 
%\end{cases}
%\end{equation*}. 

\subsection{Optimal potentials for varying support size, $a$,\\ with forcing frequency $\mu=2$ and $\beta(x) = \mathbf{1}_{[-2,2]}(x)$}
\label{sec:increaseSupport}

\begin{figure}[t!]
\centering
\begin{tabular}{cc}
\includegraphics[height=2.2in]{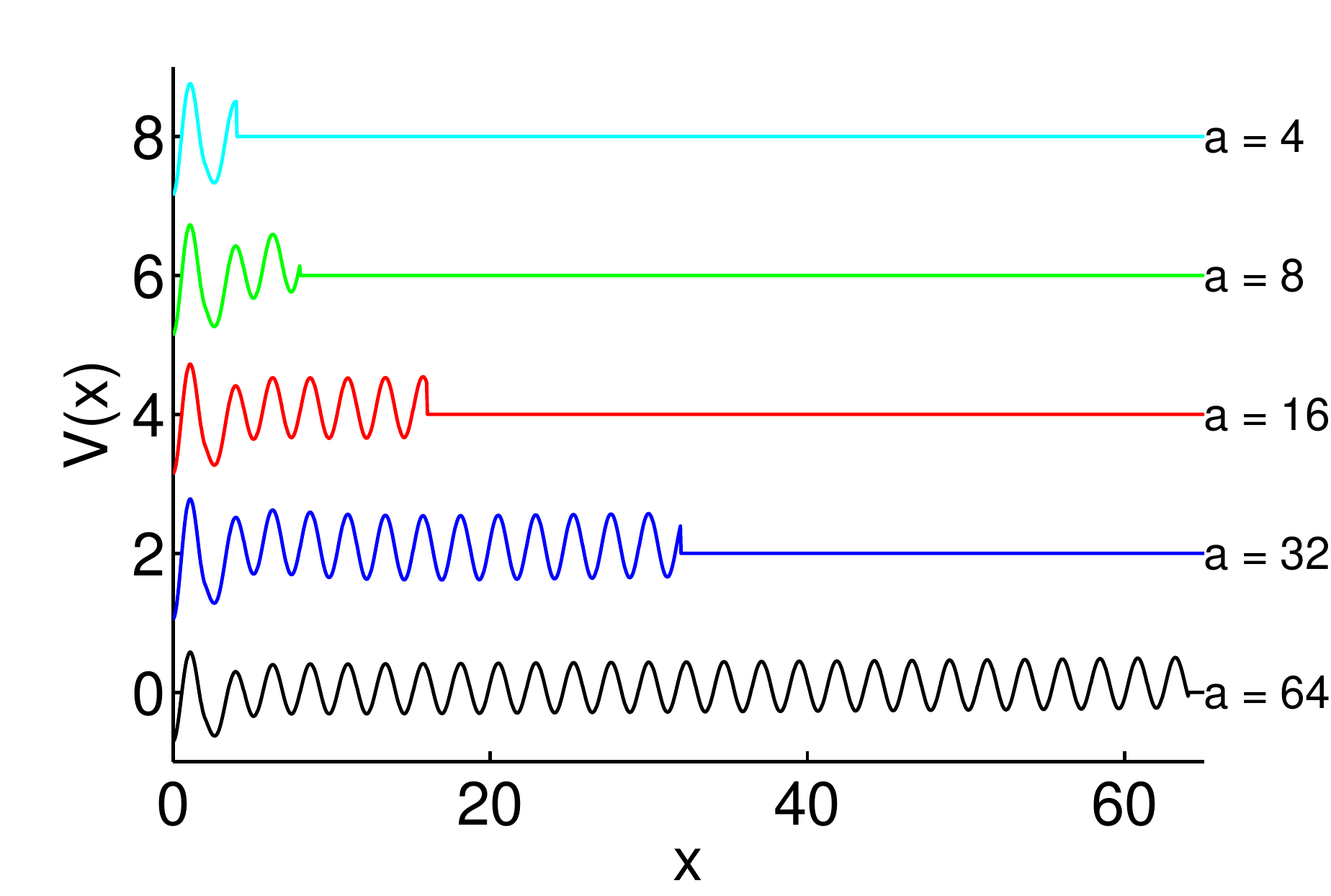}&
\includegraphics[height=2.2in]{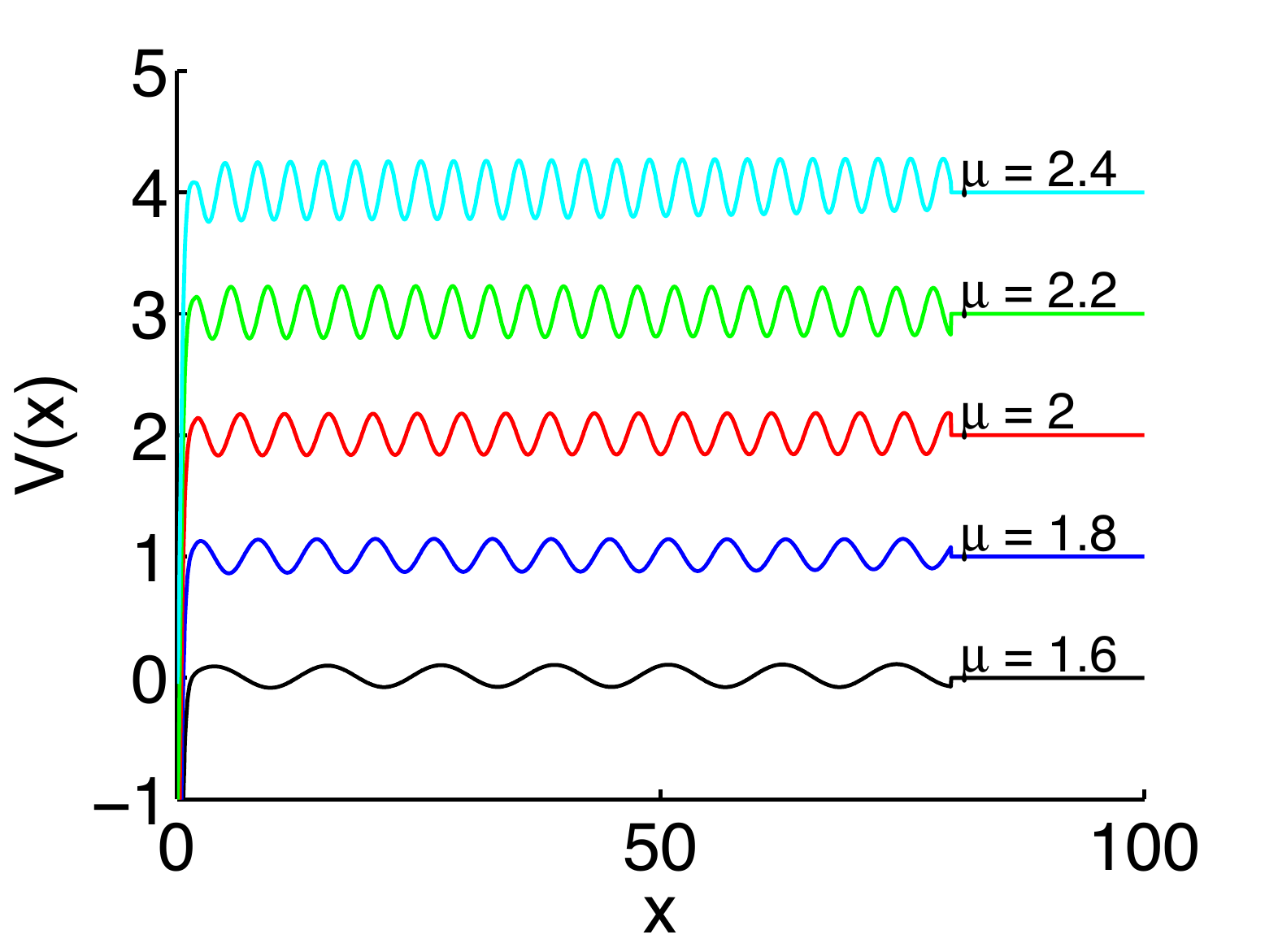} 
\end{tabular}
\caption{Locally optimal potentials for varying values of support $a$, with fixed frequency $\mu=2$ (left) and varying forcing frequency $\mu$, with fixed support $a=80$ (right). The potentials are symmetric in $x$, only $x\geq 0$ is plotted. }
\label{fig:compareParams}
\end{figure}

In  Fig. \ref{fig:compareParams} (left) we plot locally optimal potentials for 5 different values of the support, $a$. 
 (The potentials for different values of $a$ are shifted vertically.) 
The potentials that emerge are symmetric in $x$  (see remarks \ref{rem:dWdVsym} and \ref{rem:dGdVsym}) and periodic on the interval $[-a, a]$ with a defect at the origin. 
Let $V^*_a$ denote the optimal potential for support parameter $a$. We note that existing structure changes very little as we increase $a$. That is, $V^*_a$ and $V^*_b$ are nearly equivalent on the set $[c,c]$ where $c = \min(a,b)$. 
This is numerical support for Conjecture \ref{conj:convPer}. 

The following table gives the value of $\Gamma[V]$ for the sequence of potentials in Fig. \ref{fig:compareParams}(left).
\begin{center}
\begin{tabular}{c| l l l l l}
$a$ & 4 & 8 & 16 & 32 & 64\\
\hline
$\Gamma$ &  $3\times 10^{-10}$ & $2\times 10^{-11}$ & $1 \times 10^{-9}$ & $3\times 10^{-9}$ & $8\times10^{-10}$ \\
%run \# & 268 &267 & 267& 265 & 276
%remember to divide by 16
\end{tabular}
\end{center}
The non-monotonicity of $\Gamma$ with increasing support parameter, $a$, 
reflects the fact that we are only able to compute local minima of $\Gamma$. 
For small $a$, a relatively small number of  design variables give good numerical accuracy in evaluating the oscillatory integral that defines $\Gamma$ and the optimization method converges easily. 
However, as $a$ increases, the numerical method degrades as we are forced to balance accuracy 
with the number of optimization variables.

\subsection{Optimal potentials for varying  forcing frequency, $\mu$,\\
with fixed support size, $a=80$, and $\beta=\mathbf{1}_{[-2,2]}$}\label{sec:varymu}

In Fig. \ref{fig:compareParams}(right) we plot locally optimal potentials for 5 different values of forcing frequency $\mu$. The optimal potentials vary smoothly as we change $\mu$ with the period of the oscillation in the tails of the potentials decreasing with increasing $\mu$. 

The following table gives the value of $\Gamma[V]$ for the potentials in Fig. \ref{fig:compareParams}(right).
\begin{center}
\begin{tabular}{c| l l l l l}
$\mu$ & 1.6 & 1.8 & 2 & 2.2 & 2.4 \\
\hline
$\Gamma$ & $2\times 10^{-9}$  & $7\times 10^{-9}$ & $2\times 10^{-8}$ & $4\times 10^{-8}$  & $2\times 10^{-8}$ 
\end{tabular}
\end{center}

\subsection{Two mechanisms for potentials attaining small $\Gamma$}
\label{sec:2mechs}

\begin{figure}[t!]
\centering
\begin{tabular}{cc}
Low DOS Mechanism- $V_{A,opt}$ & Cancellation Mechanism- $V_{B,opt}$\\
\hline
\includegraphics[width=2in]{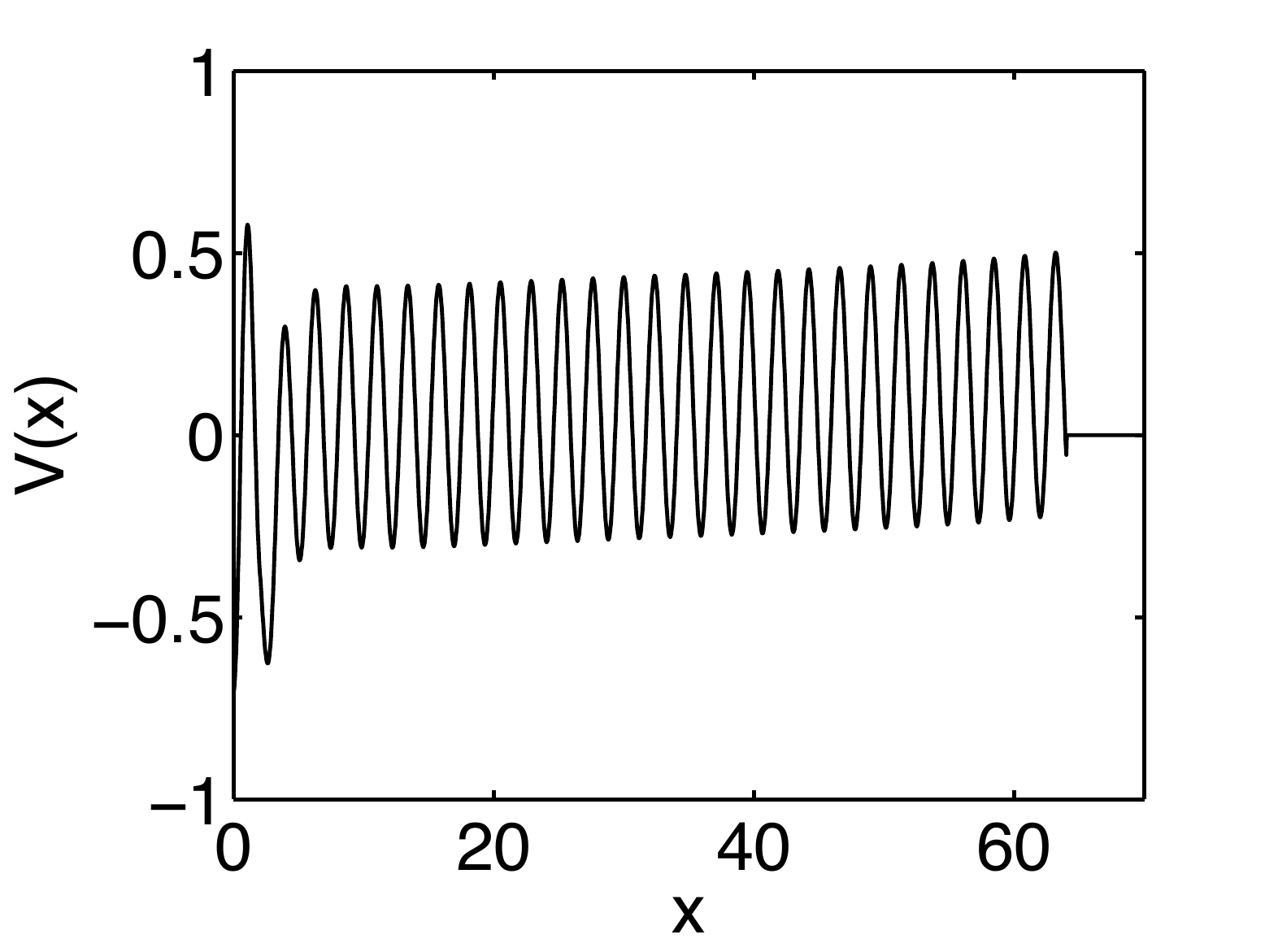} &
\includegraphics[width=2in]{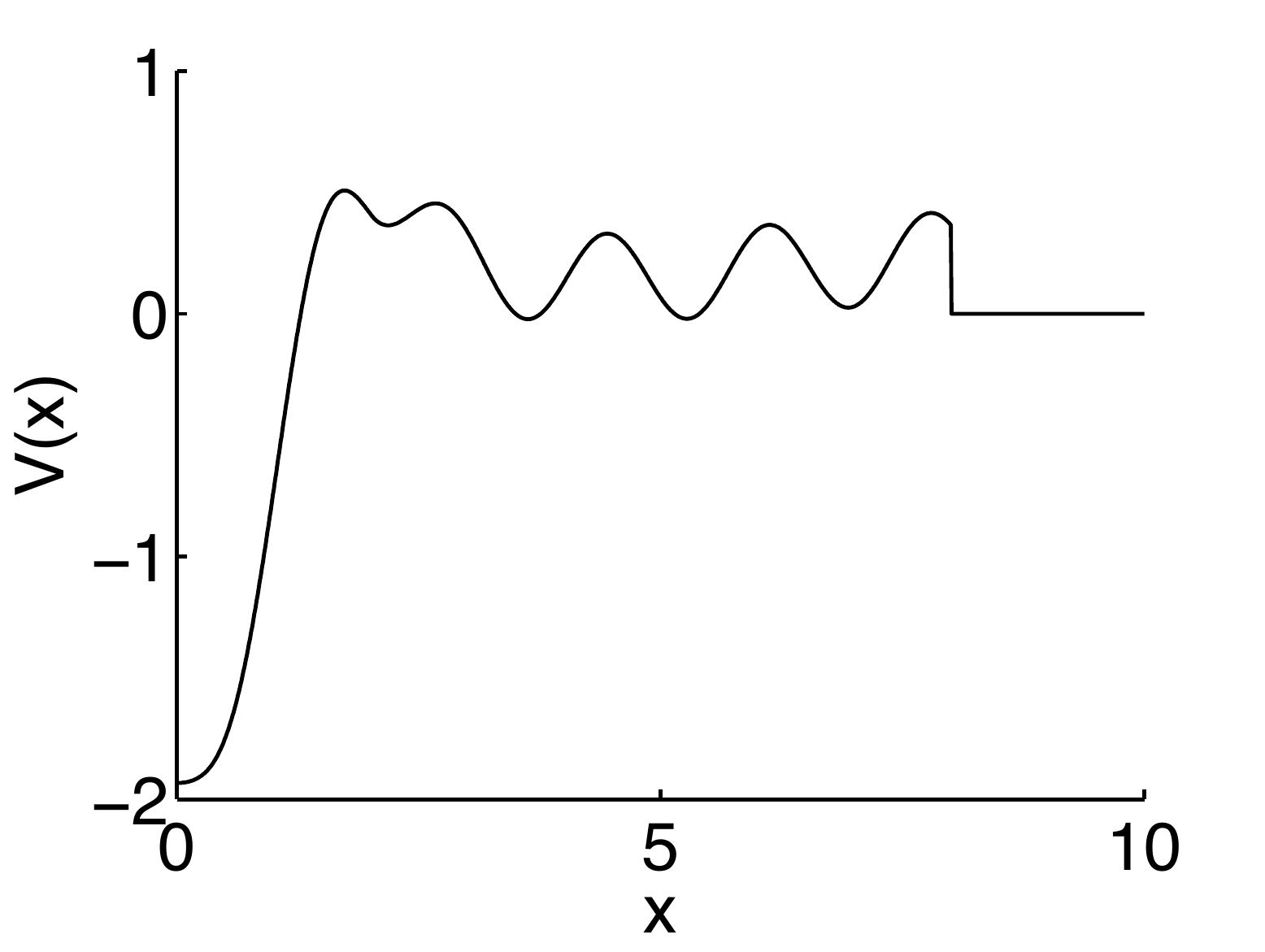}\\ 
\includegraphics[width=2in]{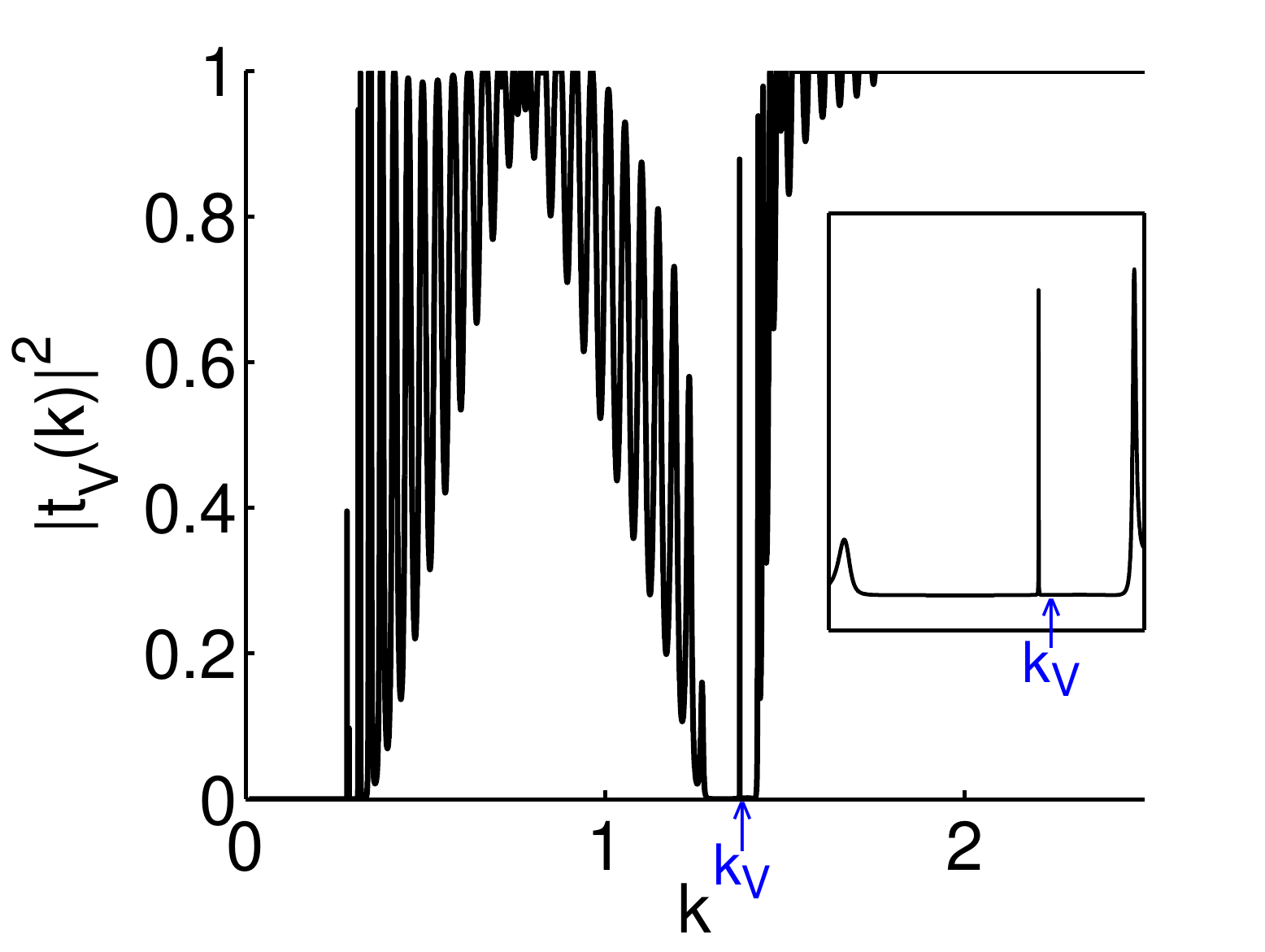} &
\includegraphics[width=2in]{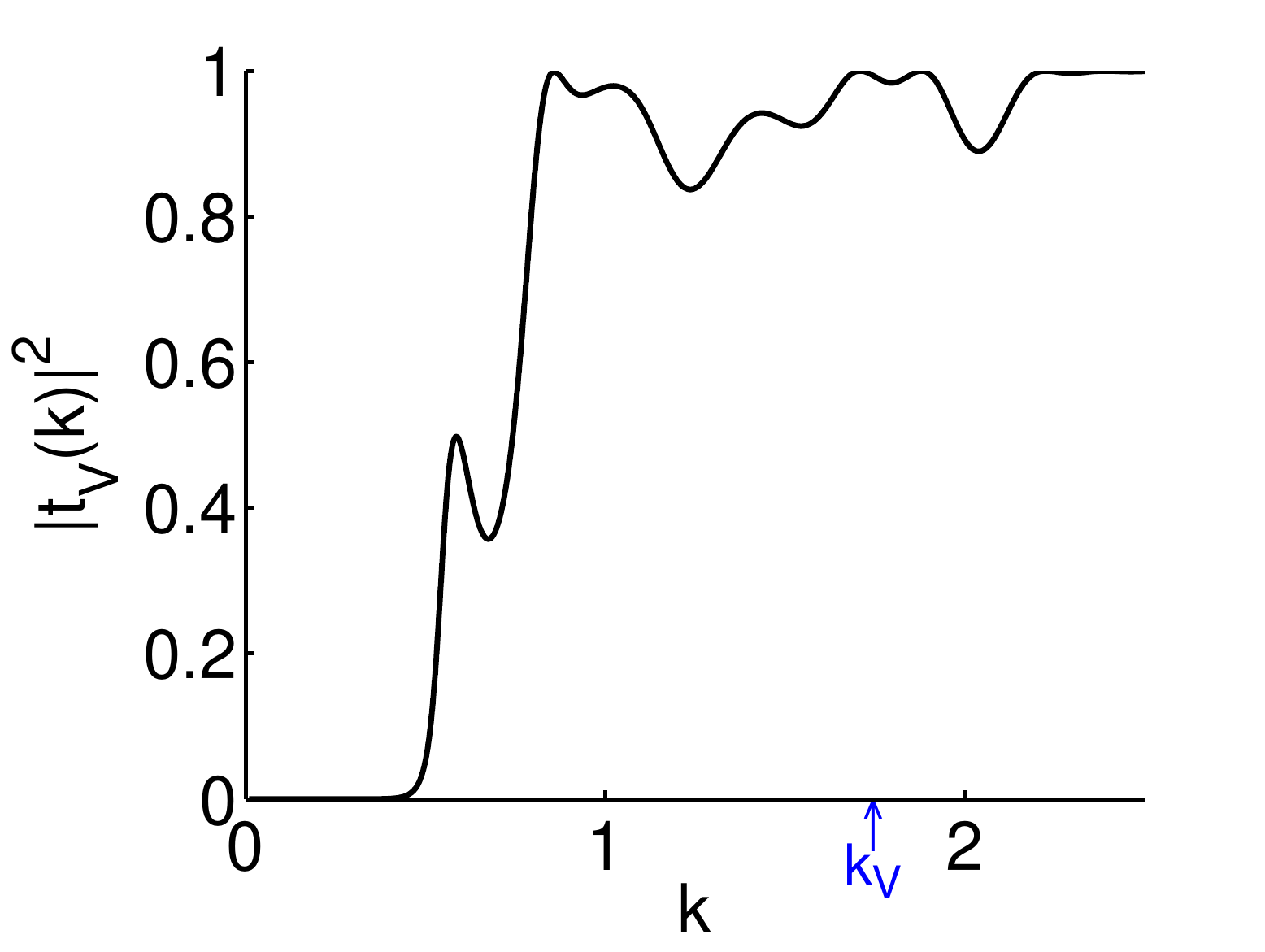}\\ 
\includegraphics[width=2in]{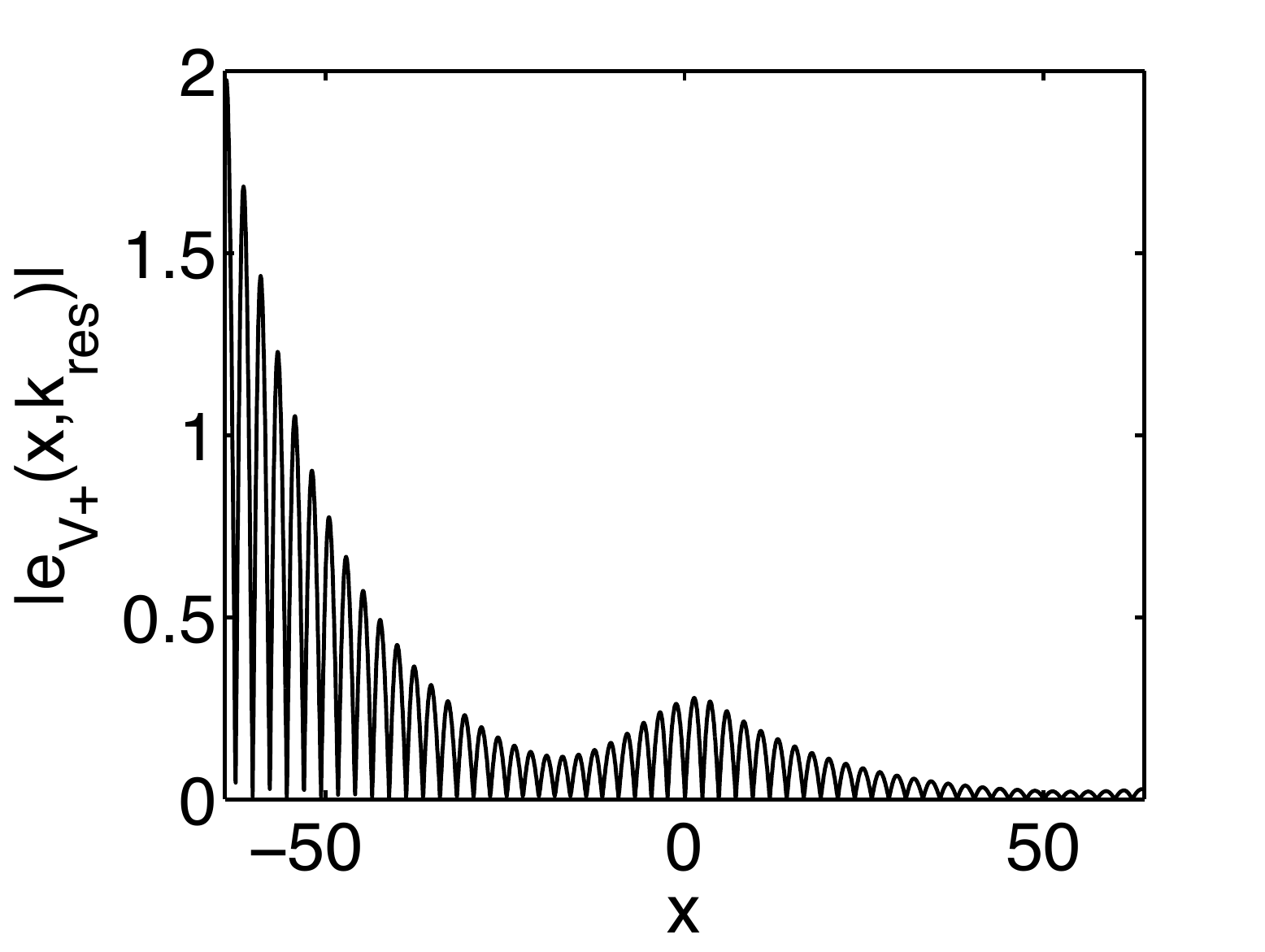} &
\includegraphics[width=2in]{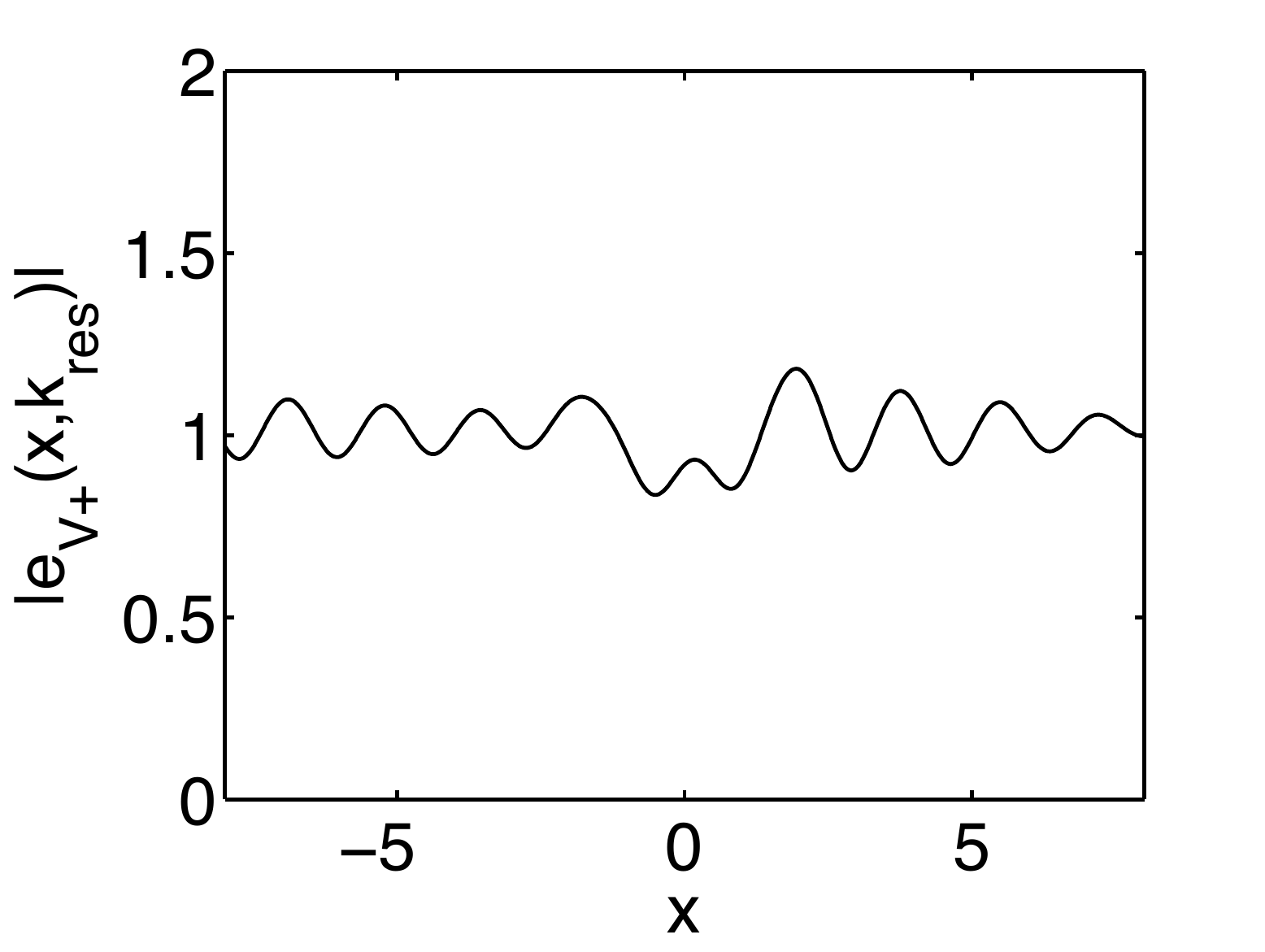}\\
\includegraphics[width=2in]{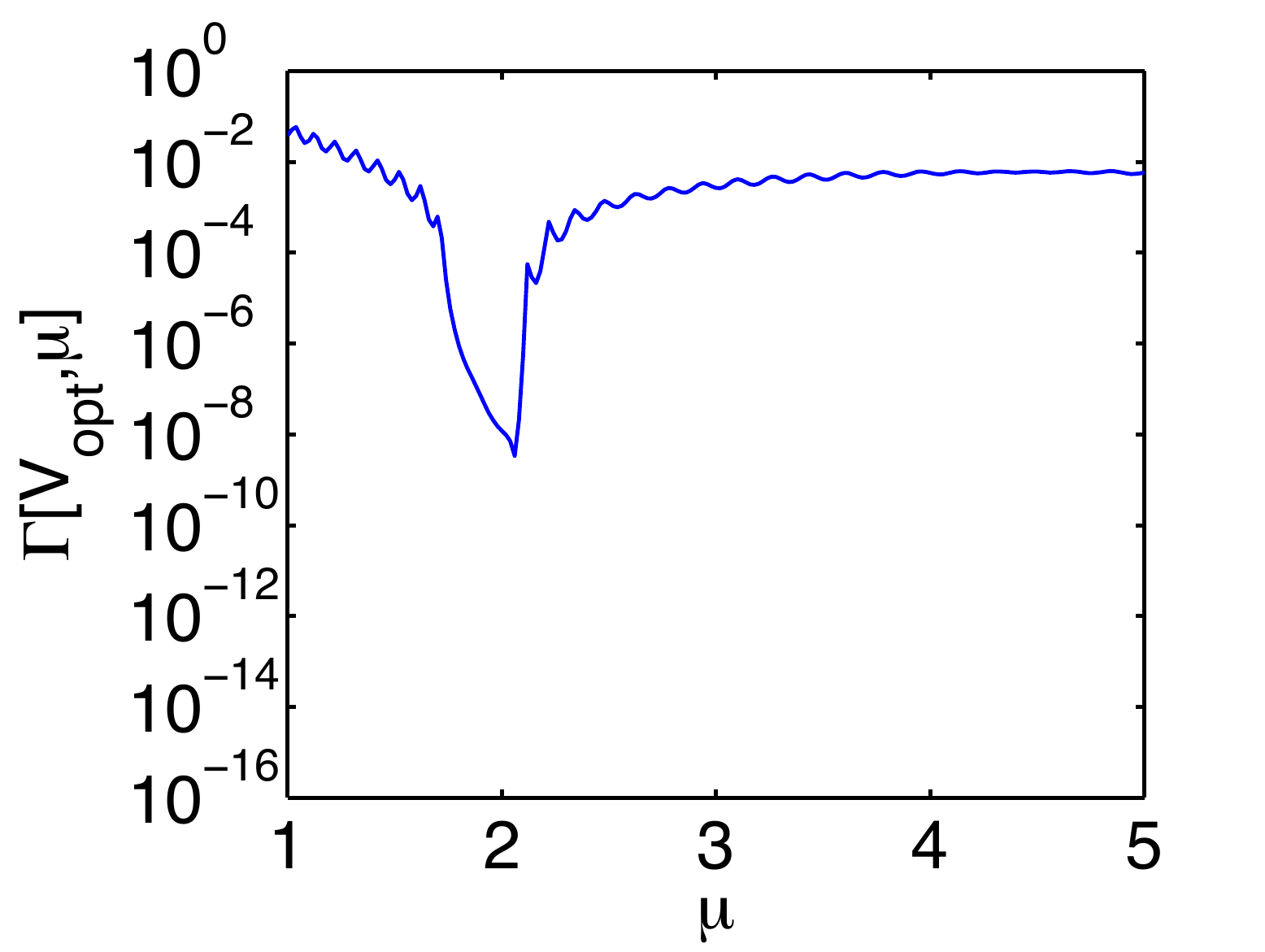} &
\includegraphics[width=2in]{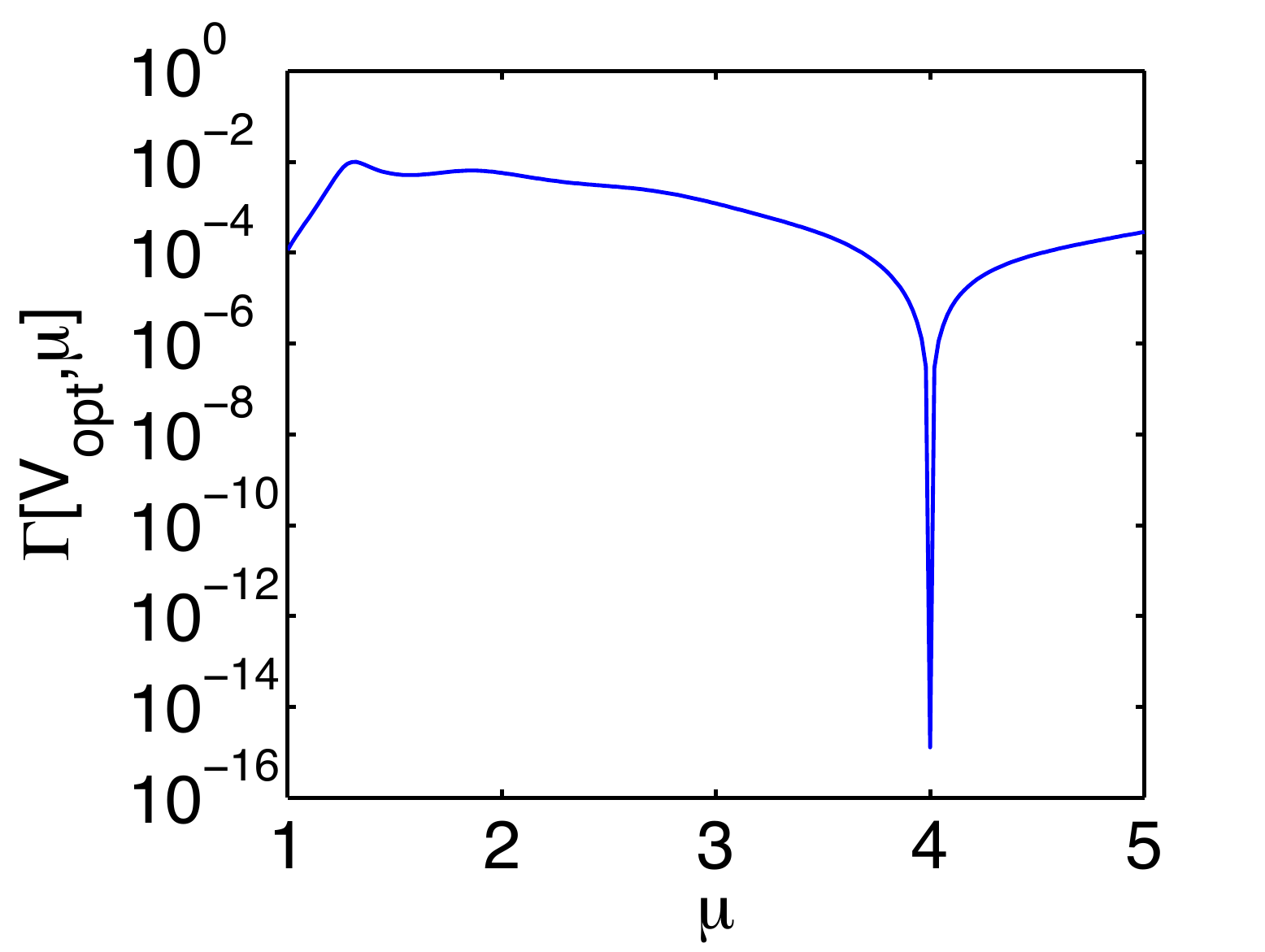}
\end{tabular}
\caption{Comparison of two locally optimal potentials achieving small $\Gamma[V]$  due the low density of states mechanism (left) and the cancellation mechanism (right). 
First row: displays plots of potentials. 
Second row: transmission, $t_V(k)$, a measure of the density of states. At the resonant frequency, $k_V$, indicated by the arrow, $t_V(k)$ is very small on the left and  approximately one on the right. The left figure inset shows that the resonant frequency is distinct from the resonant spike in the ``gap''. 
Third row: distorted plane waves,  $|e_{V+}(x,k_V)|$. 
Fourth row: $\Gamma[V_{opt};\mu]$ vs. $\mu$  for $V=V_{opt,A}$ optimized for forcing frequency $\mu_A=2$ (left),  and for $V=V_{opt,B}$ optimized for forcing frequency $\mu_B=4$ (right). Note contrasting sensitivity to perturbations in  $\mu$ away from $\mu_{A,B}$.  }
\label{fig:2mech}
\end{figure}

% note for Braxton
% the potentials used to produce these plots are stored in 
% test 206 and 261

The functional to be minimized, $\Gamma[V]$, is given by
\begin{equation}
\Gamma[V] = 
\frac{1}{16\ k_V}\ |t_V(k_V)|^2\ \sum_\pm |\langle \beta \psi_V, f_{V\pm}(\cdot,k_{V}) \rangle  |^2;
\nn\end{equation}
see \eqref{eqn:Gamma}. 
 As discussed in the introduction, two possible mechanisms can be used to decrease the values of $\Gamma[V]$;
 see Remark \ref{rem:minGamma}:
 
  \noindent {\bf Mechanism (A)} Find a potential in $\mathcal{A}_1$ for which the first factor in \eqref{toy-fgr}, $|t_V(k_{res})|^2$ is small, corresponding to low {\it density of states} near $k_{res}^2$,  or
 
 \noindent {\bf Mechanism (B)} Find a potential in $\mathcal{A}_1$ which may have significant density of states near $k_{res}^2$  (say $|t_V(k_{res})|\ge 1/2$) but such that the oscillations of $f_V(x,k_{res})$ are tuned to make the matrix element expression (inner product) in \eqref{toy-fgr} small due to cancellation in the integral.
\medskip

In Fig. \ref{fig:2mech}
%
%\footnote{\bf Is there a way to insist that this figure is displayed here?}
%
 we display the results of numerical simulations illustrating examples of both mechanisms at work.
 On the left is the potential, $V_{A,opt}(x)$, and diagnostics exhibiting mechanism (A) and on the right we exhibit mechanism (B) for the potential labeled $V_{B,opt}(x)$.  
 For both examples we choose $\beta(x) = \mathbf{1}_{[-2,2]}(x)$.\medskip
 
 The potential $V_{A,opt}(x)$ is obtained via optimization on the set $\mathcal{A}_1^\reg$ with $a=64$ and $\mu_A=2$ (same as in Fig. \ref{fig:compareParams} (left)). 
  The potential $V_{B,opt}(x)$ is obtained via optimization on the set $\mathcal{A}_1^\reg$ with $a=8$ and $\mu_B=4$. 
The first row of  figures displays the potentials as functions of $x$.
The value of $\Gamma$ for $V_{A,opt}(x)$ and $V_{B,opt}(x)$ are $\Gamma\left[V_{A,opt}\right] = 1.2 \times 10^{-8}$ and $\Gamma\left[V_{B,opt}\right] = 1.3 \times 10^{-15}$. 

The second row of plots is of the transmission coefficients $|t_V(k)|^2$ (see Eq. (\ref{eqn:defTransRefl}))  of $V_{A,opt}(x)$ and $V_{B,opt}(x)$. 
The small vertical arrows along the $k-$ axes  indicate the location of the resonant frequency $k=k_V$.

\begin{rem}  
\label{rem:lowDOS}
 {\bf Relevance of the transmission coefficient, $t_V(k)$, to the density of states:} Consider a periodic potential, $q(x)$, defined on $\mathbb{R}$. The spectrum of $-\partial_x^2+q(x)$ is equal to the union of closed intervals (bands) separated by open intervals (gaps). Now consider $q_M(x)=q(x)\mathbf{1}_{[-M,M]}(x)$. The decaying potential $q_M(x)$ has continuous spectrum extending from  zero to infinity. We expect however the spectral measure, associated with the self-adjoint operator, $-\partial_x^2+q_M(x)$ for $M\gg1$, to have little mass on those intervals corresponding to the gaps 
in the spectrum of the limit operator, $-\partial_x^2+q(x)$. Related to this is the observation that the $t_{q\ \mathbf{1}_{[-M,M]}}(k)$,  for $-\partial_x^2+q(x)\mathbf{1}_{[-M,M]}(x)$, is uniformly small, for $k^2$ in the spectral gaps of the limit operator, and converge weakly to one for $k^2$ in the spectral bands; see, for example,
 \cite{Barra:1999jt,Iantchenko:2006kl}. Thus, by plotting the amplitude of the transmission coefficient for our optimal potentials
  we can anticipate whether the density of states is small and a spectral gap is being opened around the resonant frequency, $k_V$. Thus, if $k_V$ lies in an interval of very low transmission, $t_V(k)$, the $\Gamma$, given by \eqref{eqn:Gamma} will be small. 
\end{rem}
The left plot in the second row shows that the transmission coefficient for $V_{A,opt}(x)$
 is very close to zero very near the resonant frequency, $k_{V_{A,opt}}$. 
 On the right we see that for $V_{B,opt}(x)$ the transmission coefficient very near  $k_{V_{A,opt}}$  close to one. 
 \medskip
 
In the third row of plots, for each potential, we plot the modulus of the  distorted plane wave at the resonant frequency, $|e_{V+}(x,k_V)|$. 
(Recall that for a symmetric potential, $e_{V-}(x,k) = e_{V+}(-x,k)$.)  The modulus of the distorted plane wave associated with $V_{A,opt}(x)$ decays rapidly as it enters the support of the potential, as expected since the transmission coefficient is nearly zero for this frequency (see Eq. (\ref{eqn:defTransRefl})).  The modulus of the distorted plane wave associated with $V_{B,opt}(x)$ is nearly unity over the support of the potential. 
\medskip

In the bottom row of plots of Fig. \ref{fig:2mech} we highlight an additional distinction between these two mechanisms. 
We fix the optimal potentials, $V_{A,opt}(x)$ and $V_{B,opt}(x)$,  respectively optimized for forcing fixed frequencies $\mu_A$ and $\mu_B$.  We then consider the variation  of the function $\mu\mapsto\Gamma[V_{opt};\mu]$, where
\begin{equation}
\Gamma[V_{opt};\mu] \equiv 
\frac{1}{16 \sqrt{\lambda_{V_{opt}}+\mu}}\ \left| t_{V_{opt}}\left( \sqrt{\lambda_{V_{opt}}+\mu}\ \right) \right|^2\
 \sum_\pm \left| \left\langle \beta \psi_{V_{opt}}, f_{V_{opt}\pm} \left(\cdot,\sqrt{\lambda_{V_{opt}}+\mu}\ \right)
                  \right\rangle  \right|^2.
\nn\end{equation}
Here, $\mu$ varies over a range of forcing frequencies above and below $\mu_A$, respectively, $\mu_B$.

We find that for $V_{A,opt}(x)$, the value of $\Gamma$ is relatively insensitive  to small changes in $\mu$ near $\mu_A$. 
Indeed, this is expected. Small variations in  $\mu$, imply small variations in  $\sqrt{\lambda_V + \mu}$. Therefore, if $k_{V_{A,opt}}=\sqrt{V_{A,opt}+\mu_A}$ is located in a spectral ``gap", then for  values of  $\mu$ near $\mu_A$, 
 $k(\mu)\equiv\sqrt{V_{A,opt}+\mu}$ is also in this ``gap'' . Therefore, $t_{V_{A,opt}}(k(\mu))$ and therefore $\Gamma[V_{A,opt},\mu]$ is small.  
 
In contrast, for $V_{B,opt}(x)$, the range of $\mu$ for which $\Gamma[V_{B,opt};\mu]$ remains small is extremely narrow; the smallness of the oscillatory integral, $\Gamma[V_{B,opt};\mu]$, is not preserved over a range of values of $\mu$. 

\begin{rem} These observations on the  sensitivity of $\Gamma[V_{opt},\mu]$ with respect to the forcing frequency, $\mu$, for the two different kinds of optimizers, $A-$ type  and $B-$ type, should have ramifications for applications.
\end{rem}
\medskip

By Proposition \ref{prop:tkGreaterZero},
\begin{equation}
V\in \cA_1^\reg(a,b,\mu) \ \implies\ \  |t_V(k)|\ \ge \exp\left(- 4a^2b\right).
\label{tVbound}
\end{equation} 
Thus we find that $\Gamma > 0$ due to Mechanism (A). 
However, in principle, one could find a potential such that due to perfect cancellation, $\Gamma = 0$ by mechanism (B). 
Indeed, the potential $V_B$ has an  extremely small value of $\Gamma$.

\subsection{Further discussion of mechanism (A); potentials which open a gap in the spectrum}
We have observed  that some locally optimal potentials, {\it e.g.} the potential associated with the left column of Fig. \ref{fig:2mech}, have small values of $\Gamma$ due to mechanism (A), creating a low density of states at the resonant frequency $k_{V_{opt}}$. We explore this phenomena further here and discuss the relation to Bragg resonance. 

\begin{figure}[t!]
\centering
\begin{tabular}{cc}
\includegraphics[height=2in]{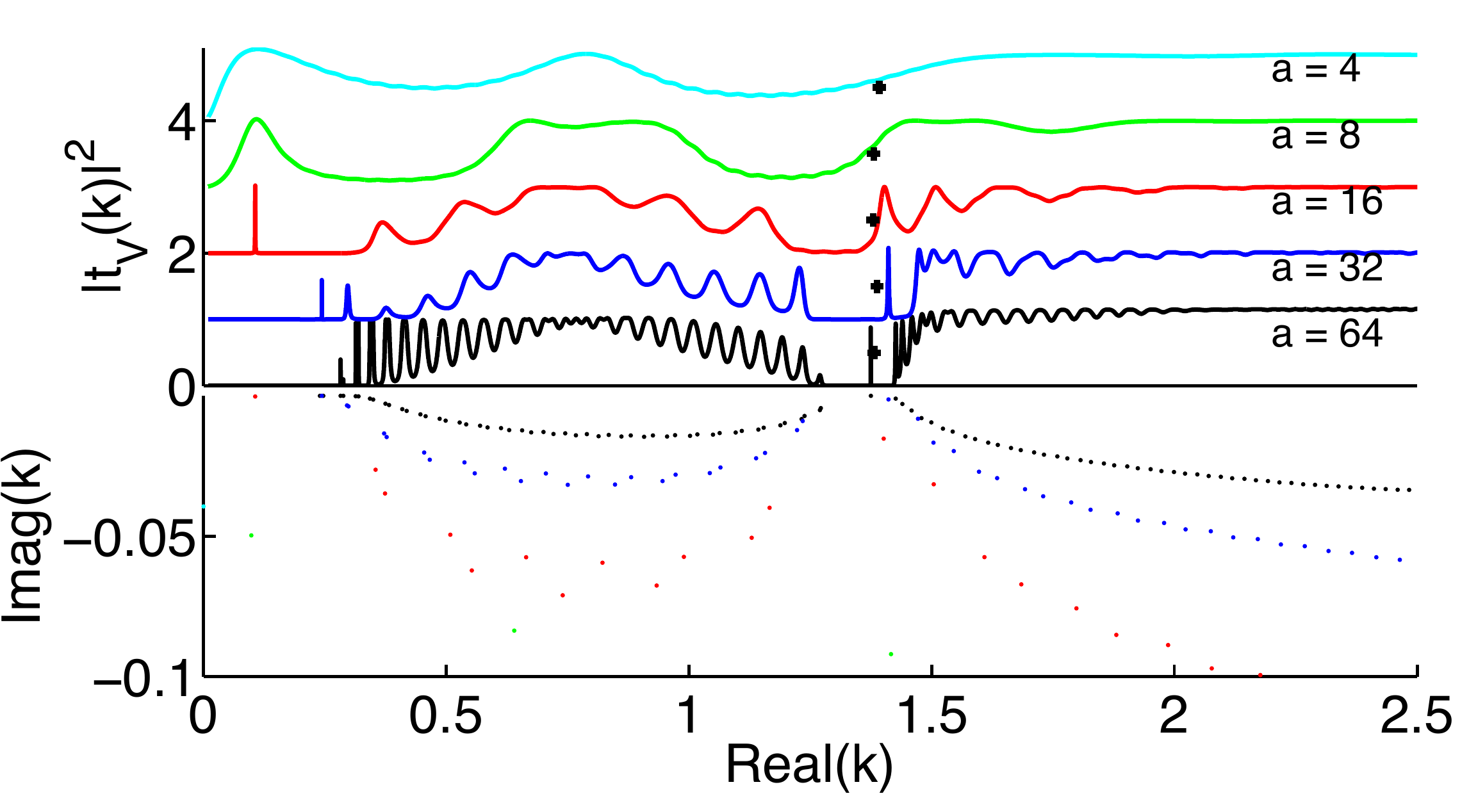} &
\includegraphics[height=2in]{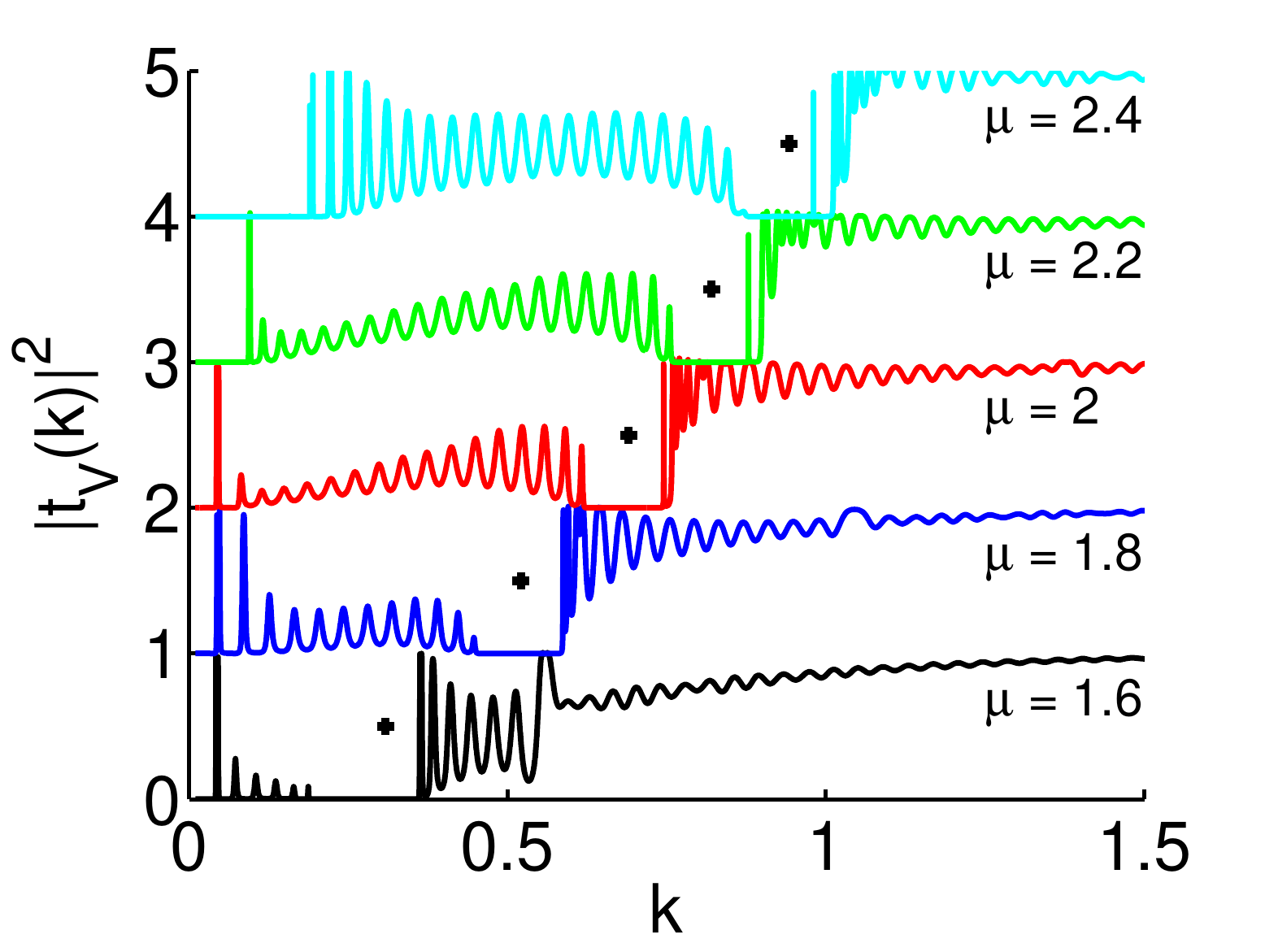} 
\end{tabular}
\caption{(Left) For the sequence of potentials in Fig. \ref{fig:compareParams}(left),  a spectral gap forms as $a\uparrow \infty$. (Right) For each of the potentials in Fig. \ref{fig:compareParams}(right), the resonant frequency lies in a spectral gap. }
\label{fig:compareParams2}
\end{figure}

% note for Braxton
% the potentials used to produce the potentialsForVaryingA and transAndResForVaryingA are stored
% in test206 - 210
% the resonance computations are saved to savedResonanceCalculation9-16.mat 
% the potentials used to produce oneGapFamilyPots and Trans are stored in old1-10

For the sequence of potentials given in Fig. \ref{fig:compareParams} (left) corresponding to an increasing sequence of values for the support parameter $a$, we plot in Fig. \ref{fig:compareParams2} (left) the transmission coefficients (top) and resonances in the lower complex plane (bottom) in corresponding colors. For each potential, the location of the resonant frequency, $k_{V_{opt}}$ is indicated by a black cross (\verb-+-) in the transmission diagram. 
The resonances were computed by solving the associated quadratic eigenvalue problem using MatScat \cite{BindelWeb}. 

\begin{rem} 
As in Remark \ref{rem:lowDOS}, let $q(x)$ be a periodic potential and $q_M(x)=q(x)\mathbf{1}_{[-M,M]}(x)$. As $M\uparrow\infty$, the resonances of $-\partial_x^2 + q_M$ will converge to the spectrum of $-\partial_x^2 + q_\infty$  \cite{Barra:1999jt,Iantchenko:2006kl}.
\end{rem}

In Fig. \ref{fig:compareParams2} (left), we see from both the transmission coefficient and the resonances that a gap is opening in the spectrum as $a\uparrow \infty$, supporting Conjecture \ref{conj:convPer}, that $q_\infty(x)$ is a periodic potential with a localized defect. 

\begin{rem}
For large support parameter $a$, a narrow spike forms in the transmission coefficient for a value $k$ within the spectral gap of the limiting operator and a resonance lies nearby. 
In the limit that  $a\uparrow \infty$, this resonance converges to a point eigenvalue within the spectral gap. 
For periodic potentials with a localized defects, such defect eigenvalues exist \cite{Hoefer:2010fk,Figotin-Klein:97,Figotin-Klein:98,Johnson-etal:10}. Our $V_{opt}$ are qualitatively similar to the class studied in  \cite{Hoefer:2010fk}.
Note that the spike in the transmission coefficient in Fig. \ref{fig:compareParams2} (left) appears to lie near the resonant frequency, but at a distinct value.
\end{rem}

In Fig. \ref{fig:compareParams2} (right) we plot $k$ vs. the transmission coefficient $|t_V(k)|^2$ for the color-corresponding potentials obtained by varying $\mu$ (the forcing frequency for which the optimization is performed) in Fig. \ref{fig:compareParams} (right). 
For each value of $\mu$, the resonant frequency lies in a spectral gap for each value of $\mu$ and there appears only to be a single gap. 

\begin{rem}
The Schr\"odinger operator $H_q = -\partial_x^2 +q$ with one specified spectral gap is unique and can be explicitly  written in terms of Jacobi elliptic functions \cite{Hochstadt:1965uq}. These are called {\it one-gap potentials}. Using the transmission coefficient plots in Fig \ref{fig:compareParams2} (right) to estimate the position of the spectral gap, we find that the corresponding one-gap potential has period comparable to that of the periodic tail of the potentials given in Fig. \ref{fig:compareParams}(right). 
\end{rem}

This suggests a good heuristic for finding potentials with small values of $\Gamma[V]$: Start with a localized potential well supporting a single bound state. Then, create a low density of states at $k = \sqrt{\lambda_V + \mu}$ by adding a truncated one-gap potential  with appropriate spectral gap. If the potential added has small amplitude, then this heuristic is equivalent to adding a cosine or Mathieu potential with  frequency given by the Bragg relation.

\subsection{Optimizing $\Gamma$ with $\beta = V$ as in Eq. \eqref{betaVgam}}
Here we study the case where $\beta=V$ as in Remark \ref{rem:2classes}, Eq. \eqref{betaVgam}, and Corollary \ref{cor:betaEqualV}. 

In Fig \ref{fig:betaEqualV} (left), we take $\mu = 2$ and plot locally optimal potentials for 4 different values of the support, $a$. The values of $\Gamma$ are given in the following table.
\begin{center}
\begin{tabular}{c| l l l l}
$a$ & 4 & 8 & 16 & 32  \\
\hline
$\Gamma$ & $8 \times 10^{-13}$ & $3\times 10^{-13}$ & $2\times 10^{-13}$ & $2\times 10^{-12}$  
\end{tabular}
\end{center}

In Fig. \ref{fig:betaEqualV} (right), we take $a=32$, and plot locally optimal potentials for 4 values of forcing frequency $\mu$. The values of $\Gamma$ are given in the following table.

\begin{center}
\begin{tabular}{c| l l l l l}
$\mu$ &  2 & 3 & 4 & 5 \\
\hline
$\Gamma$ & $2\times 10^{-12}$  & $1\times 10^{-12}$ & $2\times 10^{-13}$ &$2\times 10^{-15}$  
\end{tabular}
\end{center}

As noted in Remark \ref{lem:nonConvex}, the solution of the potential design problem is not guaranteed to be unique,  since the admissible set is non-convex. Regarding Conjecture \ref{conj:convPer} on the character of the limit of optimizers, $V_{opt,a}$ as $a$ tends to infinity, since for $\beta(x)=V(x)$ and $a=\infty$, the functional $V\mapsto\Gamma[V]$ is invariant under the transformation $V(x)\mapsto V(x+x_0)$, we could expect convergence to $V_{opt,\infty}(x)$, a localized perturbation of a periodic potential, only modulo translations. 

\begin{figure}[t!]
\centering
\centering
\begin{tabular}{cc}
\includegraphics[width=2.5in]{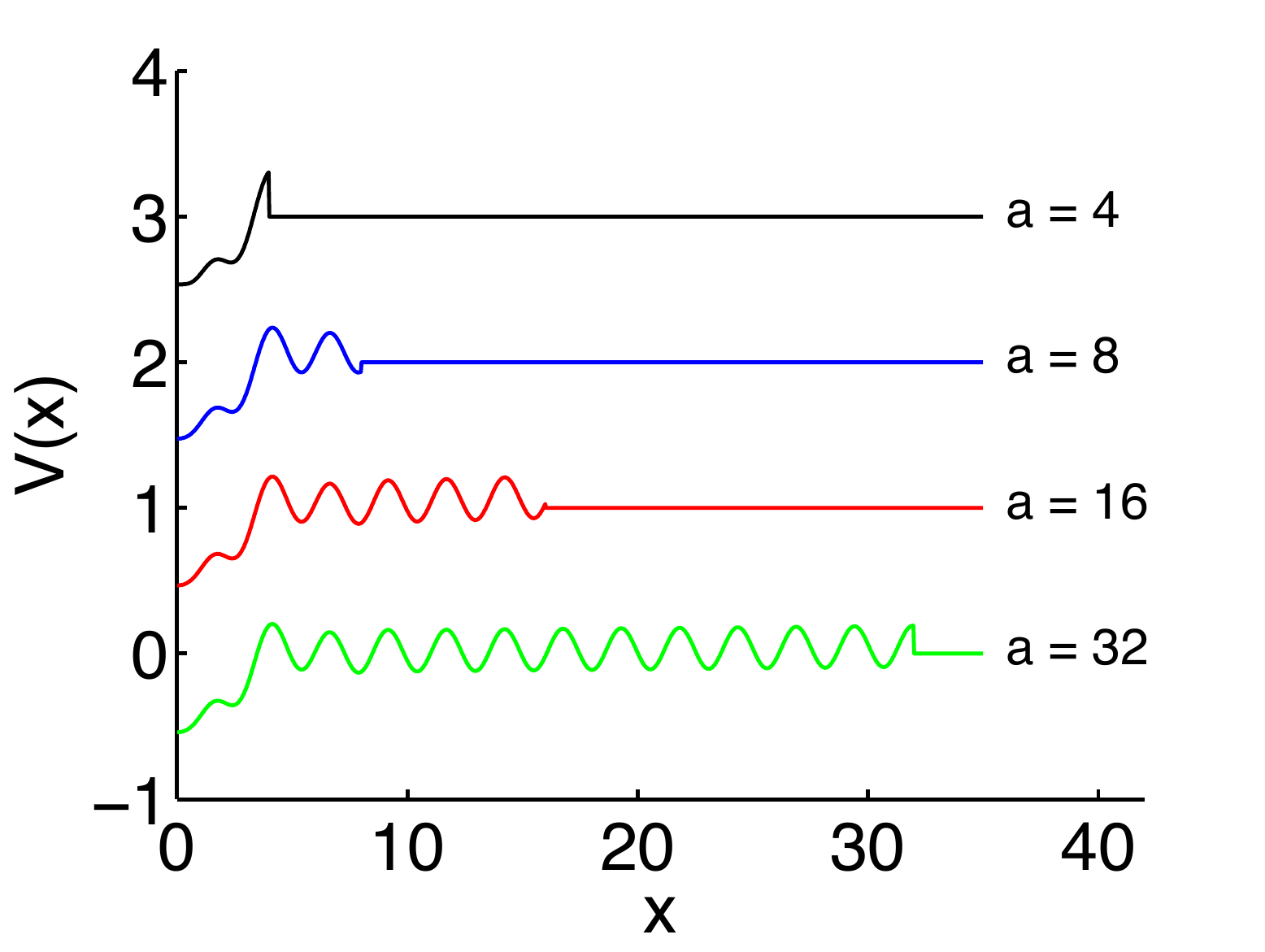}
\includegraphics[width=2.5in]{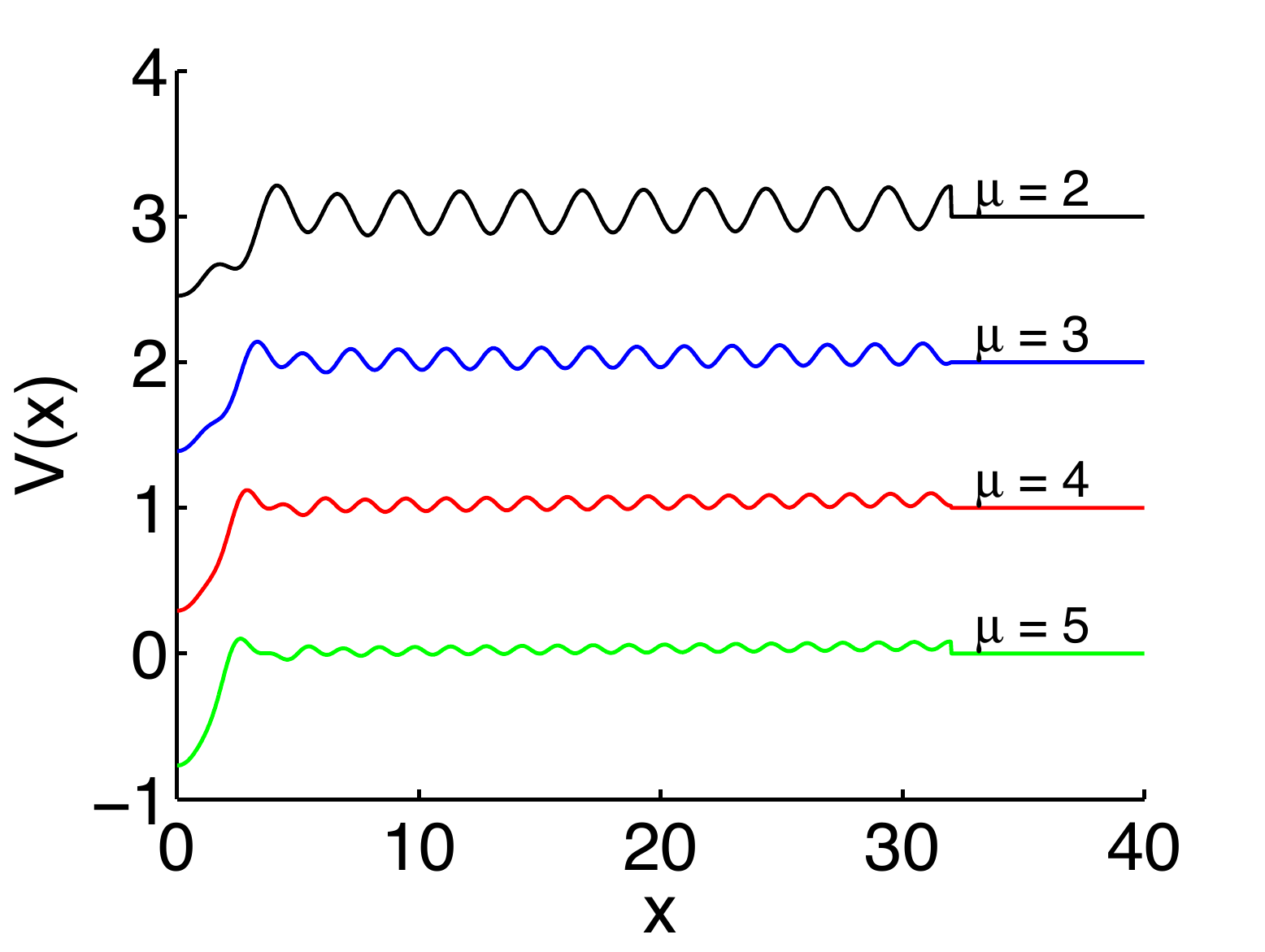} \\
\end{tabular}
\caption{With $\beta = V$ as in Eq. \eqref{betaVgam}, we plot locally optimal potentials for varying values of support $a$ and forcing frequency $\mu$. }
\label{fig:betaEqualV}
\end{figure}

% note for Braxton
% the potentials used to produce compareSupport2 and varying Mu3 are stored
% in test14 - 17 and 56:2:64

\subsection{Time dependent simulations}
\label{timeDependentSim}
For a locally optimal potential of the potential design problem,  \eqref{eqn:PDP}, we independently verify that the potential supports a  very long-lived metastable state by conducting time-dependent simulations. 
See Section \ref{sec:numMeth} for a discussion of the numerical method. 
We set $V_{init} = -A \sech^2(Bx)$ for suitably chosen $A$, $B$ and take $V_{opt}$ to be a locally optimal solution to the PDP \eqref{eqn:PDP} with $\beta = \mathbf{1}_{[-2,2]}$, $\mu = 2$, and $a=12$ (same parameter choice as in Section \ref{sec:increaseSupport}). We then solve the parametrically forced Schr\"odinger Eq.  (\ref{forced-schrod}) with $\epsilon =1$ until $t=40$ with initial conditions given by  the ground state of $H_V$ for the two potentials, {\it i.e.} $\phi^\epsilon(0) = \psi_{V_{opt}}$ and $\phi^\epsilon(0) = \psi_{V_{init}}$. 
In Figs. \ref{fig:nonAutOpt} (left) and \ref{fig:nonAutOpt} (center) we plot  $V$, $\beta$, and $\psi_V$ for the two potentials. In Figure \ref{fig:nonAutOpt} (right), we plot $t$ vs. $|\langle \phi^\epsilon(t,\cdot), \psi_V(\cdot) \rangle |^2$, the square modulus of the projection of the wave function onto the bound state for the two potentials.  

\subsection{Filtering study}
\label{sec:Filtering}
For the same potentials studied in Sec. \ref{timeDependentSim} and Fig. \ref{fig:nonAutOpt} plus the one studied in Fig. \ref{fig:2mech}(right), we conduct the following experiment. We consider the time evolution of Eq. (\ref{forced-schrod}) until time $t=50$ with initial condition taken to be $\psi_V + \text{noise}$. The noise is taken to be normally distributed random numbers generated using Matlab's \verb+randn+ function for each point in the interval $[-a,a]$. The initial condition is then normalized so that $\langle \phi^\epsilon(0) ,\, \psi_V\rangle = 1$. 
The results are plotted in Fig. \ref{fig:filtering}. We find  that for a non-optimized potential, the final state of the system is nearly zero. While for the locally optimal potential, the  bound state emerges as the  final state. In the central panel of 
Figure \ref{fig:filtering}  we see convergence to the projection of the initial condition onto the bound state of $H_{V_A}$; see central panel of Figure \ref{fig:nonAutOpt}. 
In the right panel of 
Figure \ref{fig:filtering}  we see convergence to the projection of the initial condition onto the bound state of $H_{V_B}$. 

This study suggests that such a device could be used as a filter to select  a particular spatial mode profile.  For these potentials, the system behaves as a mode-selecting waveguide, preserving the discrete components of the initial condition, while radiating the continuous components. 
 Alternatively, this study demonstrates the robustness of $\psi_{V_{opt}}$ to large fluctuations in the data.

\begin{figure}[t!]
\centering
\centering
\begin{tabular}{ccc}
\includegraphics[width=2.0in]{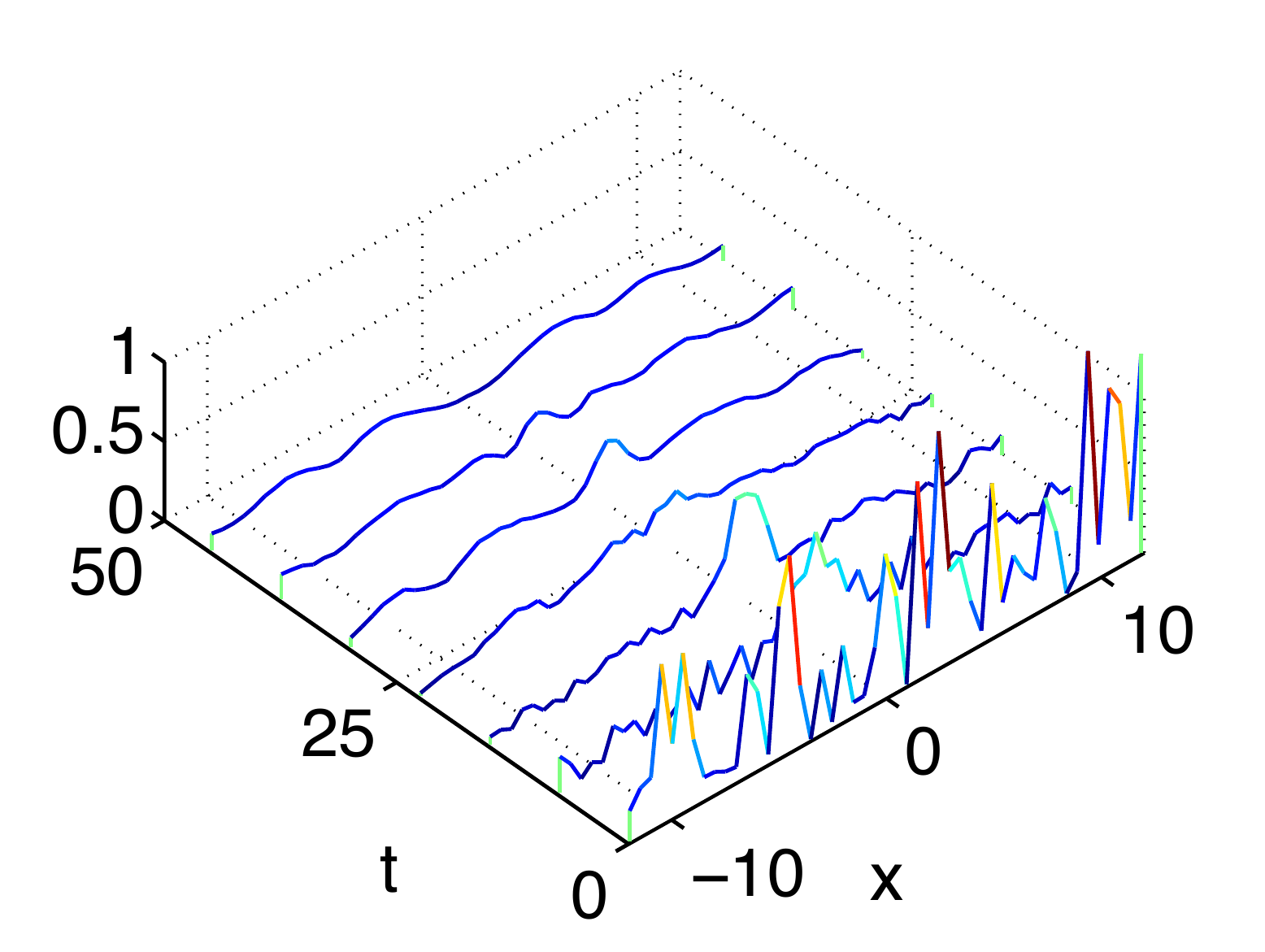} &
\includegraphics[width=2.0in]{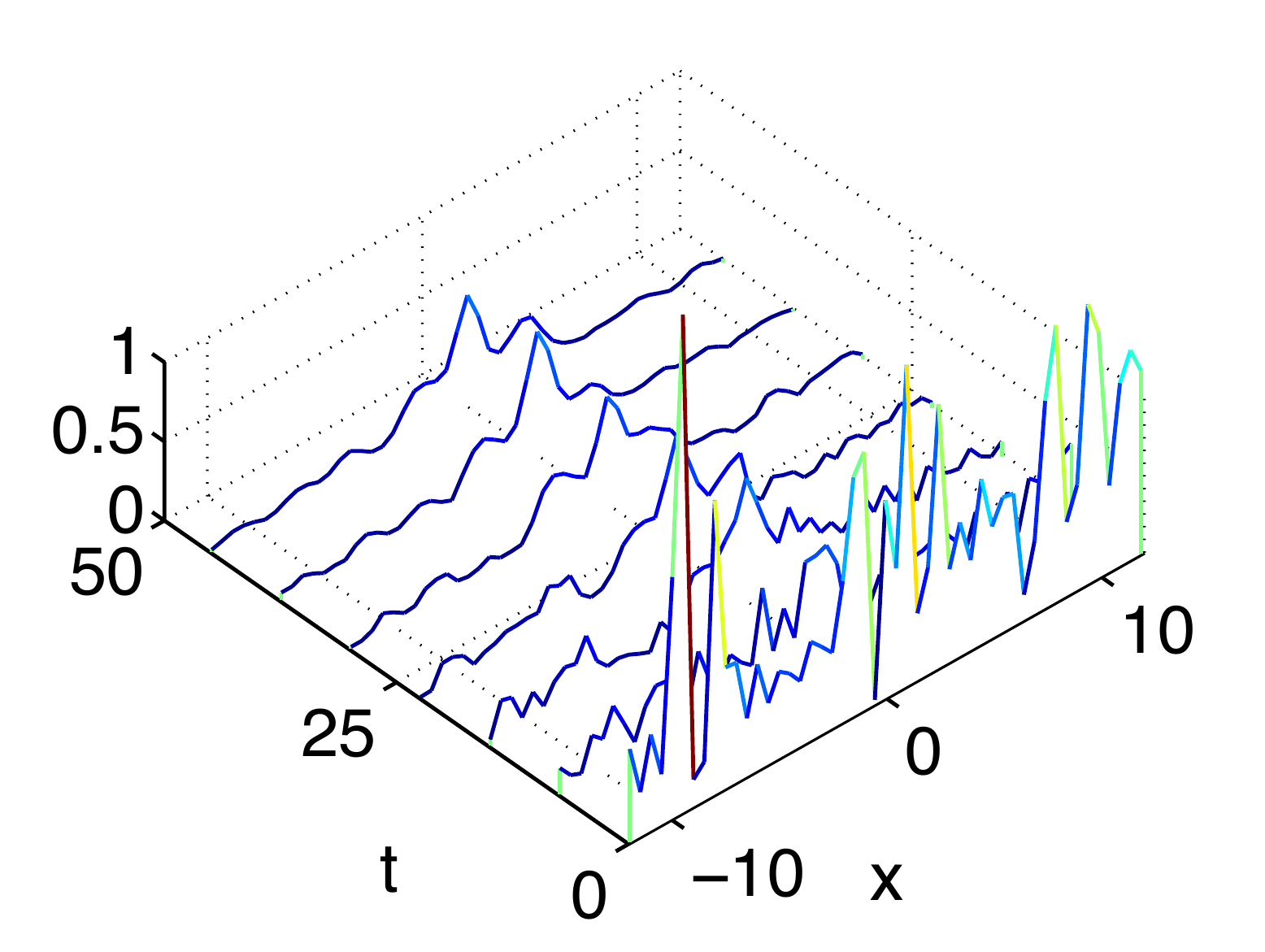} &
\includegraphics[width=2.0in]{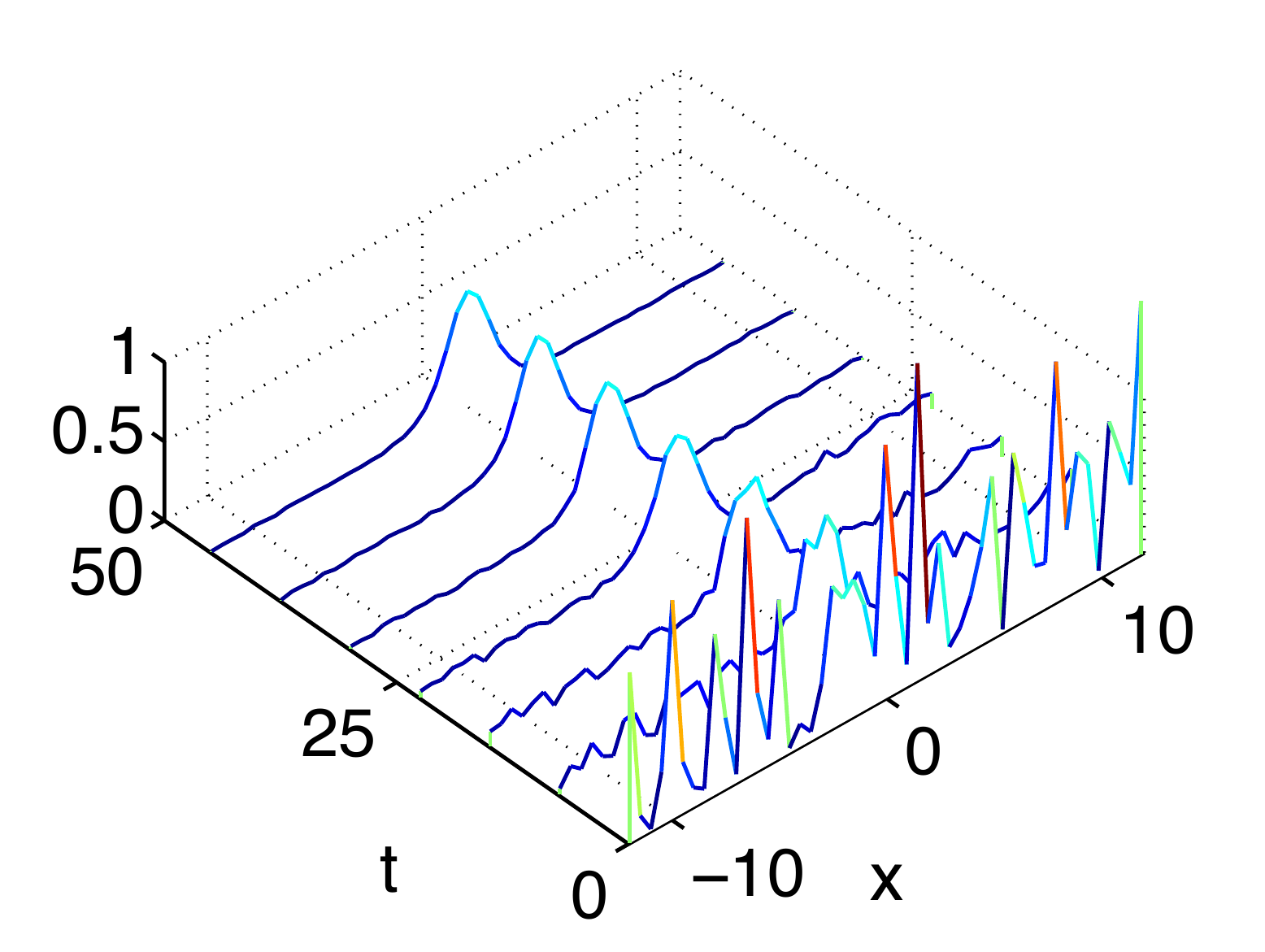}
\end{tabular}
\caption{For the two potentials in Sec. \ref{timeDependentSim} and Fig. \ref{fig:nonAutOpt}, and cancellation potential in Fig. \ref{fig:2mech}(right), we plot the time evolution $\phi^\epsilon(t,x)$, governed by Eq. (\ref{forced-schrod}) with initial condition taken to be $\psi_V + \text{noise}$.  The simulation was performed on a spatial domain $[-60,60]$ with absorbing boundary conditions. }
\label{fig:filtering}
\end{figure}

\section{Discussion and conclusions}
\label{sec:Discussion}
Scattering loss, a conservative loss mechanism,  is often a limiting factor in the performance of many engineered devices. Therefore, there is great interest in finding structures with  low scattering loss-rate. Loss can occur due to parametric or nonlinear time-dependent perturbations which couple an ideally isolated state to an environment.  
We consider a model of a bound state supported by a potential, $V$, subject to a time-periodic and spatially localized  ``ionizing'' perturbation. 
 The rate of scattering loss, $\Gamma[V]$, due to coupling of the bound state to radiation modes is given by Fermi's Golden Rule, which depends on the potential $V$. Using gradient-based optimization methods we find locally optimal structures with much longer-lived bound states.  These potentials appear to be truncations of smooth periodic structures with localized defects.
This approach can be extended to the wide class of problems presented in the introduction.

\appendix

\section{Computation of gradients / functional derivatives}
\label{sec:appCompGrads}

\subsection{Proof of Prop. \ref{eqn:grad}, gradient of $\Gamma[V]$}
\begin{proof}
Here we compute the individual terms in Eq. (\ref{eqn:chainRuleGamma}) and then assemble below. 

\paragraph{Computation of $\frac{\delta \Gamma}{\delta \psi_V}$ and $\frac{\delta \Gamma}{\delta e_{V\pm}}$.} 
We compute
\begin{subequations}
\label{eqn:dGdpsi}
\begin{align}
\frac{\delta \Gamma}{\delta \psi_V}[\delta \psi] &= \frac{1}{16 k_V} \sum_\pm \langle \beta \delta \psi, e_{V\pm}\rangle \overline{\langle \beta \psi_V, e_{V\pm}\rangle} + \text{c.c.} \\
&= \frac{1}{8 k_V} \Re \sum_\pm \langle \beta \psi_V, e_{V\pm}\rangle \langle \beta e_{V\pm}, \delta \psi \rangle. 
\end{align}
\end{subequations}
Similarly,
\begin{align}
\label{eqn:dGdeV}
\frac{\delta \Gamma}{\delta e_{V\pm}} [\delta e_V]&= \frac{1}{8 k_V} \Re \langle e_{V\pm}, \beta \psi_V  \rangle \langle \beta \psi_V, \delta e_V\rangle.
\end{align}

\paragraph{Computation of $\frac{\delta \lambda_V}{\delta V}$ and $\frac{\delta \psi_V}{\delta V}$. }
Taking variations of $H_V \psi_V = \lambda_V \psi_V$ we find that 
$$
(H_V - \lambda_V) \delta \psi_V = - (\delta V \psi_V - \delta \lambda_V \psi_V).
$$
Multiplying by $\psi$ and integrating, we obtain $\delta \lambda_V = \langle \psi_V, \delta V \psi_V \rangle$, {\it i.e.} 
\begin{align}
\label{eqn:dlamdV}
\frac{\delta \lambda_V}{\delta V} = \psi_V^2
\end{align}
and 
\begin{align*}
(H_V - \lambda_V) \delta \psi_V &= - (\delta V \psi_V - \langle \psi_V, \delta V \psi_V \rangle \psi_V ) \\
&\equiv - P_{\lambda_V}^\perp \left[\psi_V  \delta V\right] .
\end{align*}
where $P_{\lambda_V}^\perp$ is the orthogonal projection onto the space spanned by $\psi_V$.  Since $H_V$ supports only a single bound state, $P_{\lambda_V}^\perp = P_c$. The solution of this equation can be written in terms of the resolvent operator
\begin{align}
\label{eqn:dpsidV}
\frac{\delta \psi_V}{ \delta V} [\delta V] = \delta \psi_V = - R_V(\sqrt{\lambda_V}) P_c [\psi_V \delta V]
\end{align}

\paragraph{Computation of $\frac{\delta e_{V\pm}}{\delta V}$ and $\frac{\delta k_V}{\delta V}$.} We note that the distorted plane waves $e_{V\pm}$ can be expressed
\begin{equation}
\label{eqn:eVinTermsOfPhi}
e_{V\pm}(x,k_V) = e^{\pm \imath k_V x} - \phi_{V\pm} (x,k_V)
\end{equation} 
where $\phi_{V\pm}$ satisfies the following equation with outgoing boundary conditions
\begin{subequations}
\label{eqn:phi}
\begin{alignat}{2}
(H_V - k_V^2) \phi_{V\pm} &= V e^{\pm \imath k_V x} &\qquad& x \in \Omega=[-L,L] \\
\nabla\phi_{V\pm} \cdot \hat{\mathbf n} &= \imath k_V \phi_{V\pm} &\qquad& x \in \partial \Omega. 
\end{alignat}
\end{subequations}
Taking variations of Eq. (\ref{eqn:phi}), we obtain
\begin{subequations}
\label{eqn:DEdeltaPhi}
\begin{align}
(H_V - k_V^2) \delta \phi_{V\pm} &= \delta V e_{V\pm} +   
\left( 2k_V \phi_{V\pm} \pm \imath x V e^{\pm \imath k_V x} \right) \delta k_V \\ 
\nabla \delta \phi_{V\pm} \cdot \hat {\mathbf n} - \imath k_V \delta \phi_{V\pm} &= \imath \delta k_V \phi_{V\pm}.
\end{align}
\end{subequations}
Recalling $k_V^2 = \lambda_V + \mu$ and Eq. (\ref{eqn:dlamdV}) we find that $\delta k_V = \frac{\delta k_V}{\delta V}[\delta V] =  \langle \frac{\psi_V^2}{2k_V}, \delta V \rangle $ or equivalently 
\begin{align}
\label{eqn:dkdV}
\frac{\delta k_V}{\delta V} =  \frac{\psi_V^2}{2k_V}. 
\end{align}
Equation (\ref{eqn:DEdeltaPhi}) is a forced equation for $\delta \phi_{V\pm}$ with a unique solution since there is no nontrivial, outgoing solution to the homogenous equation \cite{tang-zworski-1dscat}.  
The general solution of Eq. (\ref{eqn:DEdeltaPhi}a) is
$$
\frac{\delta \phi_{V\pm}}{\delta V} [\delta V] = \alpha_\pm e_{V+} + \beta_\pm e_{V-} + R_V(k_V)\left[ \delta V e_{V\pm} +\left( 2k_V \phi_{V\pm} \pm \imath x V e^{\pm \imath k_V x} \right) \delta k_V \right]
$$ 
where $\alpha$, $\beta$ are constants. Matching boundary conditions in Eq. (\ref{eqn:DEdeltaPhi}b) and recalling that $R_V$ is the outgoing resolvent, we obtain 
\begin{align*}
\alpha_\pm &= - \frac{\delta k_V}{2k_V} \phi_{V\pm}(-L) e^{\imath k_VL} \\
\beta_\pm &= - \frac{\delta k_V}{2k_V} \phi_{V\pm}(L) e^{\imath k_VL}
\end{align*}
so that
\begin{align}
\nonumber
\frac{\delta \phi_{V\pm}}{\delta V}[\delta V]  =&  \left( R_V(k_V)\left[ 2k_V \phi_{V\pm} \pm \imath x V e^{\pm \imath k_V x} \right] -\frac{e^{\imath k_V L}}{2 k_V}\left( \phi_\pm(-L) e_{V+} + \phi_\pm(L) e_{V-} \right)   \right)  \frac{\delta k_V}{\delta V}[\delta V] \\
& + R_V(k_V) [ \delta V e_{V\pm}]. 
\label{eqn:deltaPhi}
\end{align}
Now using Eq. (\ref{eqn:eVinTermsOfPhi}) we find that
\begin{equation}
\label{eqn:deVdV2}
\delta e_{V\pm} = \pm \imath x \delta k_V e^{\pm \imath k_V x} -  \delta \phi_{V\pm}.
\end{equation}

\paragraph{Computation of Terms in Eq. (\ref{eqn:chainRuleGamma}).}
Using Eqs. (\ref{eqn:dGdpsi}) and (\ref{eqn:dpsidV}) we obtain for the first term in Eq. (\ref{eqn:chainRuleGamma})
\begin{align}
\label{eqn:term1}
 \frac{\delta \Gamma}{\delta \psi_V} \left[ \frac{\delta \psi_V}{\delta V} [ \delta V ] \right] = 
\langle - \frac{1}{8k_V}  \psi_V R_V(\sqrt{\lambda_V}) P_c \left[ \Re \sum_\pm \langle e_{V\pm} , \beta \psi_V \rangle \beta e_{V\pm} \right] , \delta V \rangle
\end{align}
where we have used the fact that the operator $R_V(\sqrt{\lambda}) P_c \colon L^2 \rightarrow L^2$ is symmetric.

The second term of Eq. (\ref{eqn:chainRuleGamma}) can be written using Eqs. (\ref{eqn:dGdeV}), (\ref{eqn:dkdV}), (\ref{eqn:deltaPhi}), and (\ref{eqn:deVdV2})
\begin{subequations}
\label{eqn:term2}
\begin{align}
\frac{\delta \Gamma}{\delta e_{V\pm}} \left[   \frac{\delta e_{V\pm}(\cdot, k_V)}{\delta V} [\delta V] \right] 
&= \frac{1}{8k_V} \Re  \langle \beta \psi_V, e_{V\pm} \rangle  \langle \beta \psi_V,  A_\pm \delta k_V - R_V(k_V) \left[ \delta V e_{V\pm} \right] \rangle \\
\nonumber
&= \frac{1}{8k_V} \Re  \langle \beta \psi_V, e_{V\pm} \rangle  
\left(  \langle \beta \psi_V,  A_\pm \rangle  \langle \frac{\delta k_V}{\delta V}, \delta V \rangle -  \langle \overline{e_{V\pm}} R_V(k_V)\left[ \beta \psi_V\right] ,  \delta V  \rangle \right) \\
\nonumber
&= \langle \frac{1}{8k_V} \Re  \langle \beta \psi_V, e_{V\pm} \rangle   
\left(  \frac{1}{2k_V}\langle \beta \psi_V,  A_\pm \rangle \psi_V^2  - \overline{e_{V\pm}} R_V(k_V)\left[ \beta \psi_V\right] \right) , \delta  V  \rangle
\end{align}
\end{subequations}
where $A_\pm$ is given in Eq. (\ref{eqn:Apm}) and we have again used the fact that $R_V$ is a symmetric operator. 

Using Eq. (\ref{eqn:dkdV}), the third term of Eq. (\ref{eqn:chainRuleGamma}) is given by 
\begin{align}
\label{eqn:term3}
 - \frac{\Gamma}{k_V} \langle \frac{\delta k_V}{\delta V}, \delta V \rangle = - \frac{\Gamma}{2k_V^2} \langle \psi_V^2 , \delta V \rangle
\end{align}

From Eqs. (\ref{eqn:chainRuleGamma}), (\ref{eqn:term1}), (\ref{eqn:term2}), and (\ref{eqn:term3}) and the Riesz representation theorem we obtain Eq. (\ref{eqn:grad}) as desired. 
\end{proof}

\subsection{Proof of Prop. \ref{prop:wronDeriv}, gradient of $W_V(0)$ }
\begin{proof}
Denoting $\dot f(x) \equiv \frac{\delta f(x)}{\delta V}[\delta V(y)]$, we fix $x$ and compute
\begin{equation}
\label{eqn:dotW}
\dot W(x) =  \dot \eta_+ \eta_-'  +  \eta_+ \dot \eta_-' - \dot \eta_+' \eta_- - \eta_+' \dot \eta_-.
\end{equation}
To compute $\dot \eta_\pm$, we take variations of Eq. (\ref{eqn:halfBoundState}) to obtain 
\begin{align*}
H_V \delta \eta_\pm &=  - \delta V \eta_\pm  \\
\lim_{x\rightarrow \pm \infty} \partial_x  \delta\eta_\pm &= 0. 
\end{align*}
Using the variation of parameters formula, we find 
$$
\dot \eta_\pm(x) \equiv \frac{\delta \eta_\pm(x)}{\delta V}[\delta V] =  - \int q(x,y) \delta V(y)\eta_\pm(y) \ud y
$$
where
$$
q(x,y) = \frac{1}{W}
\begin{cases} 
\eta_-(x) \eta_+(y)  & x\leq y \\
\eta_+(x) \eta_-(y) & x\geq y
\end{cases}
$$
is the Green's function. Differentiating we find 
$$
\dot \eta_\pm'(x) =  - \int \partial_x q(x,y) \delta V(y)\eta_\pm(y) \ud y. 
$$
We now break Eq. (\ref{eqn:dotW}) into 2 parts: $\dot W = \dot W_1 + \dot W_2$ where $\dot W_1 = \int_{-\infty}^x \star \ud y$, $\dot W_2 = \int_x^\infty \star \ud y$, and the integrand is given by 
$$
\star = - \delta V(y) \left(q(x,y) \eta_+(y) \eta_-'(y) +  \eta_+(x) \partial_x q(x,y) \eta_-(y) - \partial_x q(x,y) \eta_+(y) \eta_-(x) + \eta_+'(x) q(x,y) \eta_-(y) 
\right) .
$$
We then evaluate 
\begin{align*}
\dot W_1 &= - \frac{1}{W}\int_{-\infty}^x \Big( 
 \eta_+(x) \eta_-(y) \eta_+(y) \eta_-'(x) + \underline{ \eta_+(x) \eta_+'(x) \eta_-(y) \eta_-(y)} \\
& \qquad - \eta_+'(x) \eta_-(y) \eta_+(y) \eta_-(y) - \underline{ \eta_+'(x) \eta_+(x) \eta_-(y) \eta_-(y)} 
\Big) \delta V(y) \ud y \\
&= - \int_{-\infty}^x \eta_+(y) \eta_- (y) \delta V(y) \ud y 
\end{align*}
where the underlined terms cancel and
\begin{align*}
\dot W_2 &= - \frac{1}{W}\int_x^\infty \Big( 
\underline{\eta_-(x) \eta_+(y) \eta_+(y) \eta_-'(x)} + \eta_+(x) \eta_-'(x) \eta_+(y) \eta_-(y) \\
& \qquad - \underline{\eta_-'(x) \eta_+(y) \eta_+(y) \eta_-(x)}  - \eta_+'(x) \eta_-(x) \eta_+(y) \eta_-(y)
\Big) \delta V(y) \ud y \\
&= - \int_x^\infty \eta_+(y) \eta_- (y) \delta V(y) \ud y.
\end{align*}
Thus 
$$
\dot W =  - \int  \eta_+(y) \eta_- (y) \delta V(y) \ud y
$$
and the result follows. Note that $\dot W(x,y)$ is constant in $x$ as expected.
\end{proof}

%\section{ todo, to clarify, to discuss}

%\begin{enumerate}
%\item use of symmetry or anti-symmetry of $\beta(x)$ to test notion that there are two paths to making $\Gamma$ small
%\subitem low density of states near $k_{res}$
%\subitem cancelation properties of the integral, making the matrix element small
%\item validity of the theory at small $\Gamma$ - try  Costin-Soffer type analysis 
%\item relation to Svirsky, Svirsky+Harrell, Heider+ Weinstein+ Kohn problems,  Kao+Santosa, Gondarenko, etc...
%\item submit to JCP, JAM, MMS (emergence of multiscale structure) perhaps NLS -> short PRL ??
%\item Discuss equation describing optimal $V$? Why this is more difficult to analyze than the resonant state problem? 
%\end{enumerate}

%\clearpage
\small{
\bibliographystyle{amsplain}
\providecommand{\bysame}{\leavevmode\hbox to3em{\hrulefill}\thinspace}
\providecommand{\MR}{\relax\ifhmode\unskip\space\fi MR }
% \MRhref is called by the amsart/book/proc definition of \MR.
\providecommand{\MRhref}[2]{%
  \href{http://www.ams.org/mathscinet-getitem?mr=#1}{#2}
}
\providecommand{\href}[2]{#2}

}
\end{document}